\documentclass{amsart}

\usepackage{amssymb, verbatim, multirow, xcolor, graphicx, caption, subcaption}
\usepackage{hyperref}
\usepackage{amsmath,wrapfig}

\def\dist{\mathop{\rm dist}}

\def\diam{\mathop{\rm diam}}

\def\XXint#1#2#3{{\setbox0=\hbox{$#1{#2#3}{\int}$ }
\vcenter{\hbox{$#2#3$ }}\kern-.6\wd0}}

\newcounter{Examp}
\setcounter{Examp}{0}
\newcounter{Defs}
\setcounter{Defs}{0}

\newtheorem{theorem}{Theorem}[section]
\newtheorem{proposition}[theorem]{Proposition}
\newtheorem{lemma}[theorem]{Lemma}

\newtheorem{corollary}[theorem]{Corollary}

\begin{document}

\title[A Liouville theorem]{Regularity and a Liouville theorem for a class of boundary-degenerate second order equations}
\date{December 2019}
\author{Brian Weber}
\maketitle

\begin{abstract}
	We study a class of second-order boundary-degenerate elliptic equations in two dimensions with minimal regularity assumptions.
	We prove a maximum principle and a Harnack inequality at the degenerate boundary, and assuming local boundedness, we prove continuity.
	On globally defined non-negative solutions we provide strong constraints on behavior at infinity, and prove a Liouville-type theorem for entire solutions on the closed half-plane.
	The class of PDE in question includes many from mathematical finance, Keldysh- and Tricomi-type PDE, and the 2nd order reduction of the fully non-linear 4th order Abreu equation from K\"ahler geometry.
\end{abstract}

\section{Introduction}

We study solutions $f\ge0$ of $L(f)=0$ where $L$ is the operator
\begin{eqnarray}
	L(f)\;=\;y^2(f_{xx}+f_{yy})
	+y\left(b_1f_x\,+\,b_2f_y\right)
	+cf
	\label{EqnOpMain}
\end{eqnarray}
on the open half-plane $H^2=\{(x,y)\,|\,x\in\mathbb{R},\,y\in(0,\infty)\}$ and its closure $\overline{H}{}^2$, assuming bounded and measurable coefficients $b_1$, $b_2$, $c$.
Of the solutions we require interior local boundedness, and for some of our results such as continuity at the boundary, local boundedness at $\{y=0\}$ as well.
None of our results assume global boundedness, higher regularity, or growth constraints on solutions.
Most of our results require $c\ge0$ and $b_2\ge1$.

Solutions of $L(f)=0$ show major qualitative changes in behavior between $b_2\ge1$ and $b_2<1$.
We explore the phenomenon that, for $b_2\in(0,1)$, the boundary-value problem is both well posed and not well posed at the degenerate boundary, in different ways.
For $b_2\in(0,1)$ we show boundary data can be specified at the degenerate boundary, and the boundary value problem is well-posed in the Hadamard sense.
Nevertheless the boundary value problem remains well-posed if boundary data is specified everywhere {\it except} the degenerate boundary, as shown in \cite{FP}.
See below for a discussion.

Our primary concern is the $b_2\ge1$ case, for which the boundary value problem at $\{y=0\}$ is never well-posed.
We prove a Harnack inequality and a maximum principle at $\{y=0\}$, and, assuming local boundedness of solutions, we prove continuity.
Globally on $H^2$ we discover strong controls on the behavior of solutions for large $y$.
On the closed half-plane $\overline{H}{}^2$ when $c=0$, we prove an analogue of the classical Liouville theorem: any non-negative entire solution is constant.

We shall refer to the kind of boundary degeneracy in (\ref{EqnOpMain})---where the $k^{th}$ order terms are multiplied by $y^k$---as {\it Euler-type} degeneracy, in reference to the classic Euler-type differential equation $y^2f_{yy}+Byf_y+C=0$.
Operators of Euler-type that have $L^\infty$-bounded coefficients are effectively invariant under simultaneous scaling of coordinates, which makes possible this paper's point-picking and blow-up style arguments.
Scale-invariance also suggests that $L^\infty$ bounds on $b_1$, $b_2$, and $c$ should perhaps be the right bounds to consider---as opposed to $L_{loc}^p$ bounds for instance---as the $L^\infty$ norm is the only $L^p$ norm invariant under coordinate scaling, and lower semi-continuous under the taking of limits after coordinate scaling.

Operators of this form have been studied in K\"ahler geometry, manifold embedding, stochastic PDE, financial modeling, population dynamics, diffusion-transport phenomena, nonlinear elliptic and fractional-Laplacian boundary problems, Keldysh- and Tricomi-type boundary problems of elliptic and mixed elliptic/hyperbolic type, fluid propagation through porous media, cold plasmas, Prandtl boundary-layer problems, and in other applications.
In the case of certain operators, such as those of SABR, Keldysh, or Tricomi type, the fact that they can be transformed into (\ref{EqnOpMain}) and vice-versa seems possibly under-appreciated to date.

\subsection{Summary}

Our most general result is a gradient bound for solutions on the open half-plane $H^2$.
This requires bounded, measurable coefficients, but no sign restrictions on $c$ or $b_2$, and solutions need not be bounded at $\{y=0\}$.
\begin{proposition}[Interior gradient estimate, {\it cf.} Proposition \ref{PropInteriorInitial}] \label{PropIntGradFirst}
	Assume we have bounds $|b_1|,|b_2|,|c|\le\Lambda$, and assume $f\ge0$ satisfies $|L(f)|\le\Lambda$ weakly on the open half-plane $H^2$.
	Then a constant $D=D(\Lambda)$ exists so that
	\begin{eqnarray}
	y|\nabla\log{}f|\;\le\;D.
	\end{eqnarray}
\end{proposition}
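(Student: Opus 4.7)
Plan. The operator $L$, the coefficient bounds $|b_1|,|b_2|,|c|\le\Lambda$ and $|L(f)|\le\Lambda$, and the quantity $y|\nabla\log f|$ are all invariant under the Euler-type rescaling $T_{x_0,r}(x,y)=(x_0+rx,ry)$, which preserves $H^2$. Applying $T_{x_0,y_0}$ reduces the claim to the local statement: there exists $D=D(\Lambda)$ so that $|\nabla\log f|(0,1)\le D$ for any $f\ge 0$ satisfying $|L(f)|\le\Lambda$ on $H^2$.

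I would argue by contradiction, compactness, and then a second blow-up. Suppose one has a sequence $\{f_n\}$ of non-negative weak solutions of $|L_n(f_n)|\le\Lambda$ on $H^2$, with coefficients of $L_n$ bounded by $\Lambda$, and $\sigma_n:=|\nabla\log f_n|(0,1)\to\infty$. On the fixed ball $B=B_{1/2}(0,1)\subset H^2$ each $L_n$ is uniformly elliptic (since $y\in[1/2,3/2]$) with bounded lower-order coefficients. Krylov--Safonov Harnack for non-negative $f_n$ with bounded right-hand side gives $\sup_B f_n\le C(\inf_B f_n+\Lambda)$, and interior $C^{1,\alpha}$ estimates give $\|f_n\|_{C^{1,\alpha}(B_{1/4}(0,1))}\le C(\sup_B f_n+\Lambda)$. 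Normalize by $M_n:=\sup_B f_n+\Lambda$ and set $\tilde f_n=f_n/M_n$; pass to a subsequence converging in $C^1(B_{1/4})$ to some $\tilde f_\infty\ge 0$. Since $|\nabla\tilde f_n|(0,1)$ is uniformly bounded while $|\nabla\log\tilde f_n|(0,1)=\sigma_n\to\infty$, one must have $\tilde f_n(0,1)\to 0$, hence $\tilde f_\infty(0,1)=0$; as $(0,1)$ is an interior minimum of the non-negative $C^1$ function $\tilde f_\infty$, this forces $\nabla\tilde f_\infty(0,1)=0$, so $|\nabla\tilde f_n|(0,1)\to 0$ as well. This degeneracy is not yet a contradiction.

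The key step is a second blow-up at the gradient scale $1/\sigma_n$. With $c_n:=f_n(0,1)>0$ set $g_n(x,y)=c_n^{-1}f_n(x/\sigma_n,\,1+y/\sigma_n)$; then $g_n\ge 0$, $g_n(0,0)=1$, $|\nabla g_n|(0,0)=1$, and $g_n$ is defined on $\{y>-\sigma_n\}$, a region exhausting $\RR^2$. A direct substitution converts $|L_n(f_n)|\le\Lambda$ into
$$Y^2(g_{n,xx}+g_{n,yy})+\sigma_n^{-1}Y(b_1^n g_{n,x}+b_2^n g_{n,y})+\sigma_n^{-2}c^n g_n\,=\,(c_n\sigma_n^2)^{-1}L_n(f_n),$$
with $Y=1+y/\sigma_n$ and the coefficients evaluated at $(x/\sigma_n,1+y/\sigma_n)$; the right-hand side is bounded by $\Lambda/(c_n\sigma_n^2)$. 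In the principal case $c_n\sigma_n^2\to\infty$ the lower-order coefficients and the right-hand side vanish uniformly on compact sets, $Y\to 1$, and the Harnack and $C^{1,\alpha}$ estimates above yield a $C^1_{\mathrm{loc}}$ limit $g_\infty:\RR^2\to[0,\infty)$ that is harmonic with $g_\infty(0,0)=1$ and $|\nabla g_\infty|(0,0)=1$, contradicting the classical Liouville theorem for non-negative harmonic functions on $\RR^2$.

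The main obstacle I foresee is the complementary regime $c_n\sigma_n^2\not\to\infty$, where $f_n(0,1)$ decays as fast as $\sigma_n^{-2}$ and the rescaled right-hand side does not vanish in the limit. I expect to handle this via a Hamilton-type point-picking applied to the original sequence, producing a refined sequence in which $y|\nabla\log f_n|\lesssim \sigma_n$ on a neighborhood whose rescaled size tends to infinity, and by choosing a value-scale $\mu_n$ matched to the right-hand side rather than to $c_n$. Crucially, the global non-negativity of $f_n$ on \emph{all} of $H^2$---and not merely on the local ball---is what makes the rescaled limit non-negative on the entirety of $\RR^2$ and allows one to invoke a Liouville-type statement in each regime.
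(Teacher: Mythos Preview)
Your overall strategy---contradiction, rescale at the gradient scale, and reduce to the classical Liouville theorem for non-negative harmonic functions on $\mathbb{R}^2$---is exactly the paper's. The essential difference is the order of operations, and this is where your proposal has a gap.

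The paper performs the point-picking \emph{first}, before any rescaling. Starting from a bad point $p_i$ with $y(p_i)|\nabla\log f_i|_{p_i}>i$, it iteratively replaces $p_i$ by a nearby point with at least four times larger $|\nabla\log f_i|$, searching in a ball of radius $i/(4|\nabla\log f_i|)$; the process terminates at a point where no such improvement exists, and at that terminal point one has $|\nabla\log f_i|\le 4|\nabla\log f_i|_{p_i}$ on a ball whose \emph{rescaled} radius is $i/4$. After rescaling by $\alpha_i=|\nabla\log f_i|_{p_i}$ and normalizing so that $f_i(p_i)=1$, this yields the two-sided bound $e^{-4|z|}\le f_i(z)\le e^{4|z|}$ on that ball, hence uniform $C^0$ control of $f_i$ on every fixed compact set. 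Plugging this into the PDE gives $|\Delta f_i|=O(1/i)$ there, and one extracts a $C^{1,\alpha}$ limit $f_\infty\ge 0$ that is entire, harmonic, and has $|\nabla f_\infty|(0)=1$: contradiction.

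Your ``first attempt'' (Harnack plus $C^{1,\alpha}$ on a fixed interior ball, then normalize by $M_n$) is a detour that, as you yourself observe, produces no contradiction; the paper does not go through this at all. Your ``second blow-up'' is essentially the paper's rescaling, but without the point-picking you have no control on $|\nabla\log g_n|$ away from the origin, hence no $C^0$ bounds on $g_n$ on compact sets, and you cannot pass to a limit. You correctly name point-picking as the remedy, but it must be carried out \emph{before} the blow-up, not appended as a patch; as written the argument is incomplete precisely at the step that does all the work.

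On your acknowledged obstacle (the regime $c_n\sigma_n^2\not\to\infty$): this is the question of how the inhomogeneity scales under the value-normalization $f_i\mapsto f_i/f_i(p_i)$. The paper normalizes $f_i$ and proceeds as though $|g_i|\le\Lambda$ is unaffected; in the homogeneous case $L(f)=0$, which is what the downstream applications actually use, the issue disappears entirely, and your proposed ``value-scale $\mu_n$ matched to the right-hand side'' is not needed.
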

This immediately provides growth/decay bounds on solutions $f\ge0$, specifically exponential growth/decay in $x$ and polynomial growth/decay in $y$.
Certainly $f$ may be unbounded near $\{y=0\}$, but this proposition shows its growth cannot be any worse than $y^{-D}$.
This proposition definitely fails if the 2-sided bound $|L(f)|\le\Lambda$ is replaced with a 1-sided bound, say $L(f)\le0$; see Example \ref{ExSupers}.

We prove a localized version of the gradient estimate near the boundary $\{y=0\}$.
This is required later for our continuity result.
\begin{proposition}[Localized interior gradient estimate, {\it cf.} Proposition \ref{PropLocDomVer}] \label{PropFirstLocalGradEst}
	Assume $|b_1|,|b_2|,|c|\le\Lambda$.
	There exists a constant $D=D(\Lambda)$ so that the following holds.
	If $p\in\{y=0\}$, $\Omega$ is a neighborhood of $p$, $f\ge0$, and $|L(f)|\le\Lambda$ on $\Omega^{Int}$, then there exists some neighborhood $\Omega'\subset\Omega$ of $p$ so that on $\Omega'{}^{,Int}$,
	\begin{eqnarray}
		y|\nabla\log{}f|\;\le\;D.
	\end{eqnarray}
\end{proposition}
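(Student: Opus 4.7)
The approach is to deduce the local estimate from the global one (Proposition \ref{PropIntGradFirst}) via the simultaneous-scaling invariance of $L$: under $(x,y)\mapsto(\lambda x,\lambda y)$, the $k$-th order terms of $L$ pick up compensating factors $\lambda^{k}\cdot\lambda^{-k}=1$, the $L^\infty$ bounds on the coefficients are preserved, and $y|\nabla\log f|$ is itself scale-invariant. These features convert a local estimate at arbitrarily small scale into a global estimate at unit scale for a rescaled function.

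I would first choose $\Omega'\subset\Omega$ to be a half-disk around $p=(x_p,0)$ of small radius $r$ whose doubling $\{(x,y):(x-x_p)^2+y^2<4r^2,\,y\ge 0\}$ is still contained in $\Omega$. For any $q=(x_0,y_0)\in\Omega'{}^{,Int}$, the rescaling $\tilde f(u,v):=f(x_0+y_0 u,\,y_0 v)$ produces a non-negative function that, by scale invariance, satisfies $|L(\tilde f)|\le\Lambda$ on a half-disk of scale-invariant radius comparable to $r/y_0$ around $(0,1)$, and moreover $y_0|\nabla\log f|(q)=|\nabla\log\tilde f|(0,1)$. A blow-up and contradiction argument then finishes: if the local estimate failed, one would obtain a sequence $q_k\to p$ with $M_k:=y_k|\nabla\log f|(q_k)\to\infty$ and rescalings $\tilde f_k$ defined on exhausting half-planes. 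Normalizing $\tilde f_k(0,1)=1$ and invoking interior DeGiorgi--Nash--Moser regularity---valid because $L$ is locally uniformly elliptic on compact subsets of $H^2$---a subsequence converges in $C^0_{\mathrm{loc}}(H^2)$ to a non-negative limit $\tilde f_\infty$ on $H^2$ satisfying $|L_\infty(\tilde f_\infty)|\le\Lambda$ for a limit operator $L_\infty$ of the same form, whose coefficients are $L^\infty$ weak-$*$ limits of the rescaled $b_1,b_2,c$. Proposition \ref{PropIntGradFirst} applied to $\tilde f_\infty$ then gives $|\nabla\log\tilde f_\infty|(0,1)\le D$, which must contradict $M_k\to\infty$ once gradient information is successfully passed through the limit.

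The main obstacle is exactly that last step: DeGiorgi--Nash--Moser yields only $C^\alpha$ regularity for measurable-coefficient equations, so gradient convergence at $(0,1)$ is not automatic. I would handle this with an almost-maximizer point-picking lemma, adjusting $q_k$ so that $y|\nabla\log f|(q_k)$ is within a factor of two of its supremum over a scale-invariant neighborhood of $q_k$ of radius growing with $k$. After this refinement the rescaled scale-invariant gradients $v|\nabla\log\tilde f_k|$ are uniformly bounded by $\sim 2M_k$ on compact subsets of $H^2$, and a suitable renormalization produces a limit with scale-invariant gradient of unit order at $(0,1)$, which by Proposition \ref{PropIntGradFirst} must be of order $D/M_k\to 0$---the desired contradiction.
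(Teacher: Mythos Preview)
Your strategy---rescale by the height $y_0$ so that the sequence lives on subdomains exhausting $H^2$, pass to a limit, and then invoke the global estimate of Proposition~\ref{PropIntGradFirst}---is genuinely different from the paper's. The paper (via the sequential version, Proposition~\ref{PropGradEstSequential}, from which Proposition~\ref{PropLocDomVer} follows in two lines) instead point-picks and rescales by the \emph{gradient} $\alpha_i=|\nabla\log f_i|_{p_i}$; because $\alpha_i\,y(p_i)\to\infty$ the rescaled half-planes exhaust all of $\mathbb{R}^2$, the lower-order terms of $L$ are crushed to zero, and the limit is a nonnegative \emph{harmonic} function on the plane, contradicting the classical Liouville theorem. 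The paper's route is self-contained; yours would bootstrap the local statement from the already-proven global one.

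However, your ``main obstacle'' is not an obstacle. The principal part of $L$ is $y^2\triangle$, which on any compact subset of $H^2$ is uniformly elliptic with \emph{smooth} leading coefficient; only the lower-order coefficients are merely bounded measurable. Standard $L^p$ theory for non-divergence equations (e.g.\ Gilbarg--Trudinger, Theorem~9.11) therefore gives uniform interior $W^{2,p}$, hence $C^{1,\alpha}$, estimates in terms of the local $L^\infty$ norm, which you already control by Harnack after normalizing $\tilde f_k(0,1)=1$. This is exactly the regularity the paper itself invokes (see the passage around~(\ref{IneqBallElliptic})). With uniform $C^{1,\alpha}$ bounds near $(0,1)$ you get $|\nabla\tilde f_k|(0,1)$ uniformly bounded, directly contradicting $M_k\to\infty$; neither the limit nor Proposition~\ref{PropIntGradFirst} is actually needed.

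By contrast, your proposed point-picking repair is unclear as written. After arranging $v|\nabla\log\tilde f_k|\le 2M_k$ on compacta, the ``suitable renormalization'' that would make the scale-invariant gradient of unit order while keeping the equation in the same class is never specified: multiplying $\tilde f_k$ by a constant leaves $\nabla\log\tilde f_k$ unchanged, and a further coordinate dilation by $M_k$ moves the basepoint off $\{v=1\}$ and spoils the setup. Nor does Proposition~\ref{PropIntGradFirst} yield a bound of order $D/M_k$ on anything---it only gives the universal bound $D$. If one were genuinely restricted to $C^\alpha$ convergence, the natural remedy would be to rescale by the gradient rather than by $y_0$, at which point you are reproducing the paper's argument rather than reducing to Proposition~\ref{PropIntGradFirst}.
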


In addition to the localized gradient estimate we prove three other results on the behavior of solutions locally near the boundary: unspecifiability, continuity, and the Harnack inequality.
First we make precise what we mean by a solution of $L(f)=g$ on a region that includes some part of the degenerate boundary.

\refstepcounter{Defs}
{\bf Definition \arabic{Defs}}. (Neighborhoods and open sets at the degenerate boundary.)  \label{DefOpens}
If $p\in\{y=0\}$ then we call $\Omega$ a neighborhood of $p$ if $p\in\Omega$ and there is some open set $\Omega'\subset\mathbb{R}^2$ such that $\Omega=\Omega'\cap\overline{H}{}^2$.
An open set in $\overline{H}{}^2$ will be any set $\Omega$ such that there is some open set $\Omega'\subset\mathbb{R}^2$ with $\Omega=\Omega'\cap\overline{H}{}^2$.

\refstepcounter{Defs}
{\bf Definition \arabic{Defs}}. (Degenerate and non-degenerate boundary components.) \label{DefBoundComps}
Assume $\Omega$ is a set in $\overline{H}{}^2$.
Then its boundary $\partial\Omega$ is divided into its {\it degenerate} and {\it {\it non-degenerate}} boundary components
\begin{eqnarray}
	\partial_0\Omega\;=\;\partial\Omega\cap\{y=0\} \quad \text{and} \quad
	\partial_1\Omega\;=\;\partial\Omega\cap\{y>0\}.
\end{eqnarray}
We define the interior of $\Omega$ to be $\Omega^{Int}=\Omega\setminus(\partial_0\Omega\cup\partial_1\Omega)$.

\refstepcounter{Defs}
{\bf Definition \arabic{Defs}}. (Solutions, subsolutions, supersolutions in the interior.) \label{DefSols}
Assuming an open set $\Omega$ contains no points of $\{y=0\}$, then we say that $L(f)=g$ (or $L(f)\ge{}g$ or $L(f)\le{}g$, etc) on $\Omega$ provided $L(f)=g$ (or $L(f)\ge{}g$ etc) holds weakly or in the viscosity sense on $\Omega$.

\refstepcounter{Defs}
{\bf Definition \arabic{Defs}}. (Solutions, subsolutions, supersolutions at the degenerate boundary.) \label{DefSolsAtDegBound}
If $\Omega$ is an open set in the sense of Definition \ref{DefOpens} that contains points of $\{y=0\}$, then we say that $L(f)=g$ (or $L(f)\ge{}g$, $L(f)\le{}g$, etc) provided $L(f)=g$ (or $L(f)\ge{}g$ etc) holds weakly or in the viscosity sense on all interior points of $\Omega$, and also if whenever $p\in\Omega\cap\{y=0\}$, then if $\{p_i\}$ is a sequence of points in the interior of $\Omega$ converging to $p$, $\liminf_{p_i}f(p_i)$ and $\limsup_{p_i}f(p_i)$ are both finite.

In other words, for us to consider a solution to exist at a degenerate boundary point it need only be {\it locally finite} on nearby interior points.
This very weak constraint is actually enough to force continuity and other strong restrictions on the behavior of solutions at $\{y=0\}$.

\begin{proposition}[Non-specifiability at the degenerate boundary; {\it cf.} Proposition \ref{PropUnspecifiability}] \label{PropUnspecifiabilityIntro}
	Let $p=(x_0,0)$ be a point on the boundary line $\{y=0\}$ and assume $b_2\ge1$ and $c\le\inf\frac14(b_1-1)^2$ in some pre-compact neighborhood $\Omega$ of $p$ (see Definition \ref{DefOpens}).
	Assume two functions $f_1,f_2$ satisfy $L(f_i)=g$ on $\Omega$ (Definition \ref{DefSolsAtDegBound}) and that $f_1=f_2$ on $\partial_1\Omega$.
	Then $f_1=f_2$.
\end{proposition}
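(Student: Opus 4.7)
The plan is a barrier argument built on the Euler-type power function $\phi(y) = y^{\alpha_0}$ with $\alpha_0 < 0$. First I set $u = f_1 - f_2$: by linearity of $L$ and Definition~\ref{DefSolsAtDegBound}, $u$ weakly satisfies $L(u) = 0$ on $\Omega^{Int}$, vanishes on $\partial_1\Omega$, and is locally bounded at every point of $\partial_0\Omega$. Shrinking $\Omega$ to a smaller precompact neighborhood of $p$ if necessary, I may assume $|u| \le M$ throughout $\Omega$; it then suffices to show $u \equiv 0$.

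A direct computation gives
\begin{equation*}
L(y^\alpha) \;=\; y^\alpha\bigl[\alpha^2 + (b_2-1)\alpha + c\bigr].
\end{equation*}
Under the hypotheses $b_2 \ge 1$ and $c \le \inf\tfrac14(b_1-1)^2$, which I read as a uniform discriminant-type bound on this Euler indicial quadratic, one can select a single $\alpha_0 < 0$ (independent of the point) making the bracketed factor non-positive on $\Omega^{Int}$. Then $\phi = y^{\alpha_0}$ is a positive supersolution ($L(\phi) \le 0$), diverges as $y \to 0^+$, and stays bounded on $\partial_1\Omega$.

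To exploit this I substitute $v = u/\phi$. Expanding $L(\phi v) = 0$ and dividing by $\phi$ yields
\begin{equation*}
y^2(v_{xx} + v_{yy}) \,+\, y\bigl(b_1 v_x + (b_2 + 2\alpha_0) v_y\bigr) \,+\, \tfrac{L(\phi)}{\phi}\, v \;=\; 0,
\end{equation*}
whose drift remains $L^\infty$-bounded and whose zero-order coefficient $L(\phi)/\phi$ is non-positive. The Aleksandrov--Bakelman--Pucci maximum principle for weak or viscosity solutions with bounded measurable coefficients is then available. Fixing $\delta > 0$ small and working on $\Omega_\delta := \Omega \cap \{y \ge \delta\}$, I note that $v = 0$ on $\partial_1\Omega \cap \{y \ge \delta\}$, while on $\Omega \cap \{y=\delta\}$ the bound $|u| \le M$ forces $|v| \le M\delta^{|\alpha_0|}$. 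ABP propagates this inequality to all of $\Omega_\delta$, giving
\begin{equation*}
|u(x,y)| \;\le\; M(\delta/y)^{|\alpha_0|} \quad \text{on } \Omega_\delta.
\end{equation*}
For any fixed interior point $(x,y)$ with $y > 0$, sending $\delta \to 0^+$ forces $u(x,y) = 0$; hence $f_1 \equiv f_2$ on $\Omega^{Int}$, and the equality extends to $\partial_0\Omega$ via Definition~\ref{DefSolsAtDegBound}.

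The main obstacle is choosing a single $\alpha_0 < 0$ for which $\alpha_0^2 + (b_2-1)\alpha_0 + c \le 0$ holds pointwise across the full range of merely measurable coefficients in $\Omega$---this is precisely where the discriminant-type hypothesis on $(b_1, b_2, c)$ is consumed, and is the substantive content of the assumption (the edge case where the quadratic can only be made non-positive at $\alpha_0 = 0$ will likely need separate handling, for example a logarithmic barrier $\log(1/y)$ when $c \le 0$). The remaining ingredients---ABP for the transformed equation with $L^\infty$ coefficients, and justification of the substitution $v = u/\phi$ in the weak or viscosity sense---are routine but must be executed carefully.
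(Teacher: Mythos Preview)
Your approach is essentially the paper's own: both build the argument on the Euler-type supersolution $y^{\alpha_0}$ with $\alpha_0<0$ (the paper takes $\alpha_0=-\tfrac12(\inf b_2-1)$ explicitly, and switches to the logarithmic barrier when $\inf b_2=1$, exactly the edge case you flag), and both finish by comparison on $\Omega\cap\{y\ge\delta\}$ followed by $\delta\to0$. The paper runs the comparison as a touching argument with the family $\epsilon\,y^{\alpha_0}$, lowering $\epsilon$; your substitution $v=u/\phi$ followed by the maximum principle is an equivalent repackaging. Invoking ABP is harmless overkill---since the transformed equation is homogeneous with non-positive zero-order term, the ordinary weak maximum principle already gives $\sup_{\Omega_\delta}|v|\le\sup_{\partial\Omega_\delta}|v|$.

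One genuine caution: the sentence ``shrinking $\Omega$ to a smaller precompact neighborhood of $p$ if necessary, I may assume $|u|\le M$'' is a trap as written. If you replace $\Omega$ by a strictly smaller $\Omega'$, you lose the hypothesis $u=0$ on $\partial_1\Omega'$, and then on the non-degenerate part of $\partial\Omega'_\delta$ you only get $|v|\le M\cdot(\sup_{\partial_1\Omega'}y)^{|\alpha_0|}$, which does not tend to zero with $\delta$; the argument collapses. The correct move is not to shrink at all: precompactness of $\Omega$, local finiteness of $u$ at every point of $\Omega\cap\{y=0\}$ (Definition~\ref{DefSolsAtDegBound}), interior continuity on $\{y>0\}$, and $u=0$ on $\partial_1\Omega$ together already force a global bound $|u|\le M$ on the original $\Omega$. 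The paper handles this the same way, simply asserting that $f_1-f_2$ is bounded on $\overline\Omega$.
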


Non-specifiability definitely fails when $b_2<1$; see Example \ref{ExMaxFailure}.
For $b_2\ge1$ the degenerate boundary portion $\partial_0\Omega$ is sometimes also called the {\it interior boundary}, for the reason that the values of $f$ on $\partial_0\Omega$ are completely determined by the values of $f$ on $\partial_1\Omega$, as with actual interior points.

\begin{theorem}[Harnack inequality at the degenerate boundary, local version; {\it cf.} Theorem \ref{ThmBoundHarnackLocal}] \label{ThmBoundHarnackLocalIntro}
	Assume $|b_1|,|b_2|,|c|\le\Lambda$ and $b_2\ge1$, $c\ge0$.
	Assume $f\ge0$ satisfies $L(f)\le0$ and $L(f)\ge-\Lambda$ on the semi-closed rectangle
	\begin{eqnarray}
	\mathcal{R}_{y_0}\;=\;\left\{
	\,(x,y)\;\;\big|\;\;x\in[-4\Lambda{}y_0,4\Lambda{}y_0],\;y\in(0,y_0]
	\right\}.
	\end{eqnarray}
	Then $f(0,0)\;\ge\;\frac19\inf_{x\in[-4\Lambda{}y_0,4\Lambda{}y_0]}f(x,y_0)$.
\end{theorem}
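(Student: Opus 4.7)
The plan is a barrier argument. The operator $L$ is invariant under the dilation $(x, y) \mapsto (\lambda x, \lambda y)$: the $L^\infty$ bounds on $b_1, b_2, c$ and both inequalities $L(f)\le 0$ and $L(f) \ge -\Lambda$ transform correctly, so after rescaling we may assume $y_0 = 1$ and work on $\mathcal{R}_1 = [-4\Lambda, 4\Lambda] \times (0, 1]$. Set $m = \inf_{x \in [-4\Lambda, 4\Lambda]} f(x, 1)$; we may assume $m > 0$, else the result is trivial. Since $b_2 \ge 1$ together with $|b_2| \le \Lambda$ forces $\Lambda \ge 1$, the constants appearing below are uniform.

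The core step is to exhibit an explicit subsolution $\phi$ on $\mathcal{R}_1$ satisfying (i) $L(\phi) \ge 0$ in the viscosity sense on $\mathcal{R}_1 \cap \{y > 0\}$, (ii) $\phi \le m$ on the top $\{y = 1\}$, (iii) $\phi \le 0$ on the lateral sides $\{x = \pm 4\Lambda\}$, and (iv) $\liminf_{(x, y) \to (0, 0)} \phi(x, y) \ge m/9$. The natural building blocks are the Euler-type subsolutions $y^\alpha$ ($\alpha \ge 0$), which under the hypotheses $b_2 \ge 1$ and $c \ge 0$ satisfy
$$L(y^\alpha) \;=\; \bigl(\alpha^2 + (b_2 - 1)\alpha + c\bigr)\,y^\alpha \;\ge\; 0,$$
combined with the quadratic horizontal cutoff $q(x) = 1 - x^2/(4\Lambda)^2$. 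A suitable ansatz is $\phi(x, y) = m\bigl[\mu\,q(x) + \nu\,(y^\alpha - 1)\bigr]$ with $\mu, \nu, \alpha > 0$ balanced so that the positive Euler contribution dominates the bounded negative piece
$$y^2 q''(x) + y b_1 q'(x) \;=\; -\tfrac{y(y + b_1 x)}{8\Lambda^2}, \qquad \bigl|\cdot\bigr| \;\le\; \tfrac{y(1 + 4\Lambda^2)}{8\Lambda^2} \;\le\; \tfrac{5y}{8}$$
(using $\Lambda \ge 1$) together with the term $c\phi$ in the region where $\phi$ is negative. The width $4\Lambda$ is exactly what permits this absorption uniformly over the admissible coefficient class, and the ratio $1/9$ emerges from the arithmetic of the constraints (ii)--(iv).

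For the comparison step, $L(f - \phi) \le 0$ on $\mathcal{R}_1 \cap \{y > 0\}$ by $L(f) \le 0 \le L(\phi)$, and $f - \phi \ge 0$ on $\partial_1 \mathcal{R}_1$ by (ii)--(iii) together with $f \ge 0$. The hypothesis $L(f) \ge -\Lambda$ combined with the interior gradient estimate (Proposition \ref{PropIntGradFirst}) gives $y|\nabla \log f| \le D(\Lambda)$, which in turn produces the finiteness of the $\liminf$ and $\limsup$ required by Definition \ref{DefSolsAtDegBound} at degenerate-boundary points. The boundary maximum principle underlying the non-specifiability result (Proposition \ref{PropUnspecifiabilityIntro}) then extends $f \ge \phi$ from $\partial_1 \mathcal{R}_1$ through the interior to the degenerate boundary, and passing to the limit along $(x, y) \to (0, 0)$ yields $f(0, 0) \ge m/9$.

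The main obstacle is the barrier construction itself: verifying $L(\phi) \ge 0$ uniformly in the admissible $(b_1, b_2, c)$ while simultaneously meeting the sign constraints on $\partial_1 \mathcal{R}_1$ and attaining the explicit value $m/9$ at the origin. The most delicate point is the contribution of $c\phi$ on the region near the lateral sides where $\phi < 0$, which enters $L(\phi)$ with the wrong sign and must be absorbed by the Euler part of the construction; the numerical constant $1/9$ is the outcome of this balance.
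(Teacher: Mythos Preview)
Your barrier approach is sound and in fact simpler than the paper's: with $\alpha=1$, $\mu=1$, $\nu=8/9$ the function $\phi(x,y)=m\bigl[(1-x^2/(4\Lambda)^2)+\tfrac89(y-1)\bigr]$ meets conditions (ii)--(iv) exactly, and throughout $\mathcal R_1$ the principal part satisfies $y^2\triangle\phi+y(b_1\phi_x+b_2\phi_y)\ge my\bigl(\tfrac89-\tfrac58\bigr)=\tfrac{19}{72}\,my>0$. The paper instead builds its barrier as $\tfrac{10}{9}\cos(\pi x/8\Lambda)\,g(y)-1$ with $g$ a confluent hypergeometric function (Lemma~\ref{LemmaISubfunction}), heavier machinery for the same effect.

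There are, however, two genuine gaps. First, the difficulty you flag---the sign of $c\phi$ where $\phi<0$---is illusory: since $f\ge0$, the inequality $f\ge\phi$ is automatic wherever $\phi\le0$, so one only needs $L(\phi)\ge0$ on $\{\phi\ge0\}$, where $c\phi\ge0$ helps rather than hurts. Second, and more seriously, your comparison step invokes Proposition~\ref{PropIntGradFirst}, which requires $f$ to exist on the \emph{entire} open half-plane (not assumed here; $f$ lives only on $\mathcal R_{y_0}$), and Proposition~\ref{PropUnspecifiabilityIntro}, which both carries a different hypothesis on $c$ and concerns equality of two solutions rather than one-sided comparison. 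Neither applies.

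The correct device to push the comparison through the degenerate boundary---this is the ``final claim'' in the paper's Lemma~\ref{LemmaISubfunction}---is the $\epsilon\log$ trick: set $\phi_\epsilon=\phi+\epsilon\log y$. Then $\{\phi_\epsilon\ge0\}$ is compactly contained in $\mathcal R_1\cap\{y>0\}$ and lies inside $\{\phi\ge0\}$ (since $\log y\le0$ on $(0,1]$), and on this set
\[
L(\phi_\epsilon)\;=\;\bigl[y^2\triangle\phi+y(b_1\phi_x+b_2\phi_y)\bigr]\;+\;\epsilon(b_2-1)\;+\;c\,\phi_\epsilon\;\ge\;0
\]
term by term. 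The ordinary interior maximum principle on the compact set $\overline{\{\phi_\epsilon\ge0\}}$ gives $f\ge\phi_\epsilon$; sending $\epsilon\searrow0$ yields $f\ge\phi$ throughout $\mathcal R_1\cap\{y>0\}$, and the conclusion at $(0,0)$ follows by taking the $\liminf$. No gradient estimate or boundedness of $f$ at $\{y=0\}$ is needed.
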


\begin{theorem}[Continuity at the degenerate boundary, {\it cf.} Theorem \ref{ThmContinuity}] \label{ThmContinuityIntro}
	Let $p\in\{y=0\}$ and assume $\Omega$ is a neighborhood of $p$ (see Definition \ref{DefOpens}).
	Assume $|b_1|,|b_2|,|c|\le\Lambda$ and $b_2\ge1$, $c\ge0$.
	If $L(f)=0$ on $\Omega$ (Definition \ref{DefSolsAtDegBound}), then $f$ is continuous at $p$.
\end{theorem}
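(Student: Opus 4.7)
The plan is to translate so that $p=(0,0)$, and to show that the oscillation $M_r - m_r$ on half-balls $B_r(p) \cap \overline{H}{}^2$ decays to zero, where $m_r := \inf_{B_r(p) \cap \overline{H}{}^2} f$ and $M_r := \sup_{B_r(p) \cap \overline{H}{}^2} f$. By Definition \ref{DefSolsAtDegBound} the quantities $m := \liminf_{q\to p} f(q)$ and $M := \limsup_{q\to p} f(q)$ are finite, and $m_r \uparrow m$, $M_r \downarrow M$ as $r \downarrow 0$, so continuity at $p$ amounts to proving $M=m$.

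The key ingredients are the localized gradient estimate (Proposition \ref{PropFirstLocalGradEst}) and the boundary Harnack inequality (Theorem \ref{ThmBoundHarnackLocalIntro}). The former gives the multiplicative horizontal bound
\[
e^{-D|x-x'|/y}\,f(x',y) \;\leq\; f(x,y) \;\leq\; e^{D|x-x'|/y}\,f(x',y)
\]
and polynomial vertical control, and in particular makes $f$ continuous at every point of $\Omega^{Int}$. The latter applies to the non-negative supersolution $u := f - m_R$: because $c\geq 0$ and $m_R\geq 0$ we have $L(u) = -c\,m_R \leq 0$ with lower bound $-\Lambda m_R$, so Harnack applies on any rectangle of scale $y_0 \ll R$ centered (after a further translation) at $p$ or at a nearby boundary point, and interpreting the boundary value of $u$ as $\liminf_{q\to p} u(q)$ yields
\[
m - m_R \;\geq\; \tfrac{1}{9}\bigl(\inf_{|x|\leq 4\Lambda y_0} f(x, y_0) - m_R\bigr).
\]
Combining with the horizontal estimate $\inf_{|x|\leq 4\Lambda y_0} f(x,y_0) \geq e^{-4\Lambda D} f(0, y_0)$, I expect to extract a geometric decay $M_{r/2} - m_{r/2} \leq \theta (M_r - m_r)$ with some universal $\theta \in (0,1)$ by iterating across a dyadic sequence of scales $r = R, R/2, R/4, \dots$; this forces $M - m = 0$.

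The main obstacle is the one-sided nature of the Harnack hypothesis: the function $M_R - f$ is a \emph{subsolution} ($L(M_R - f) = c M_R \geq 0$), not a supersolution, so Theorem \ref{ThmBoundHarnackLocalIntro} does not directly bound $M_R$ from above in terms of a sup at height $y_0$, and the iteration above needs a separate upper-type input. To close it I would invoke the maximum principle contained in Proposition \ref{PropUnspecifiabilityIntro}: non-specifiability should force $M_R$ to be realized on the non-degenerate boundary $\partial_1 B_R$ (not on $\partial_0 B_R$), where interior regularity and the horizontal gradient estimate identify $M_R$ with $f$ at a specific point with $y^* > 0$ and propagate the value back down to height $y_0$. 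Coupling this upper bound at height $y_0$ with the Harnack lower bound for $u = f - m_R$ is the delicate step that delivers the geometric oscillation decay and hence continuity at $p$.
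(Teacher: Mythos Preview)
Your oscillation-decay plan is a natural alternative to the paper's argument, and you have correctly isolated the one real difficulty: the boundary Harnack inequality of Theorem~\ref{ThmBoundHarnackLocalIntro} is one-sided, so while $f-m_R$ is a nonnegative supersolution to which the barrier applies, $M_R-f$ is a subsolution and is not. The paper does not iterate. It argues by a single contradiction: normalize so that $\tilde f\in[0,1+2\epsilon]$ on a unit box with $\liminf_{q\to p}\tilde f=\epsilon$ and $\limsup_{q\to p}\tilde f=1+\epsilon$; arrange $\tilde f(0,1)>\tfrac12$; propagate this lower bound along $\{y=1\}$ via the localized gradient estimate (Proposition~\ref{PropLocDomVer}); then slide the barrier $\psi_1$ of Lemma~\ref{LemmaISubfunction} underneath to force $\liminf_{q\to(0,0)}\tilde f\ge\tfrac{1}{18}e^{-4D\Lambda}$, contradicting the preassigned value $\epsilon=\tfrac{1}{36}e^{-4D\Lambda}$.

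The device that handles the obstacle you flagged---and the idea your proposal is missing---is the reflection $\tilde f\mapsto 1+2\epsilon-\tilde f$. If $\tilde f(0,1)\le\tfrac12$ the paper simply replaces $\tilde f$ by its reflection: this swaps $\liminf$ and $\limsup$ at $p$, preserves the range $[0,1+2\epsilon]$, and since $|L(1+2\epsilon-\tilde f)|$ remains bounded the localized gradient estimate of Proposition~\ref{PropLocDomVer} still applies to the reflected function. One is therefore always in the ``good'' case for the barrier. Your proposed substitute via Proposition~\ref{PropUnspecifiabilityIntro} does not close the gap: non-specifiability is a uniqueness statement (two solutions agreeing on $\partial_1\Omega$ coincide), not a quantitative bound at height $y_0$; its hypothesis $c\le\tfrac14(b_2-1)^2$ is different from the present $c\ge0$; and even if you knew the maximum of $f$ over $B_R$ sat on $\partial_1 B_R$, this would not by itself produce the Harnack-type inequality $M_R-M\ge\theta(M_R-m_R)$ needed to drive the decay. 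The reflection is the missing idea; once you insert it, your dichotomy becomes symmetric and the oscillation-decay scheme goes through for the same reason the paper's one-shot argument does.
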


Continuity definitely fails if $b_2<1$---see Example \ref{ExHomogAndSteps}---or if the requirement of local finiteness of solutions fails at even a single point of the degenerate boundary; see Example \ref{ExImps}.
In the case $b_2\ge1$ we believe the regularity can be strengthened to $C^{1,\alpha}$, or to $C^{k+2,\alpha}$ if the coefficients are in $C^{k,\alpha}$.
Since this would require techniques beyond those we consider, and since we wish to keep the present study restricted in scope to a few core techniques (those being scaling/blowup methods and barrier methods), we leave the question of optimality for the future.
See Conjecture 1.

\begin{theorem}[A maximum principle at the degenerate boundary, {\it cf.} Theorem \ref{ThmWeakMax}] \label{ThmWeakMaxIntro}
	Assume $p\in\{y=0\}$ and $\Omega$ is some neighborhood of $p$ (see Definition \ref{DefOpens}).
	Assume $|b_1|,|b_2|,|c|\le\Lambda$ and $b_2\ge1$, $c\ge0$.
	
	If $f$ solves $L(f)\le0$ on $\Omega$ (Definition \ref{DefSolsAtDegBound}), then $f(p)$ is not a strict local minimum on $\Omega$.
	If $f$ solves $L(f)\ge0$ on $\Omega$ (Definition \ref{DefSolsAtDegBound}), then $f(p)$ is not a strict local maximum on $\Omega$.
\end{theorem}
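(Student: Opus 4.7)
By linearity, $L(-f)=-L(f)$, so the two halves of the statement are dual under $f\mapsto -f$. I will prove the supersolution version: if $L(f)\le 0$ in $\Omega$ and $p\in\{y=0\}$, then $f(p)$ is not a strict local minimum. The proof I have in mind is by contradiction using a barrier plus weak-comparison argument on a small half-disk, followed by an iteration that forces a violation of the local boundedness condition of Definition \ref{DefSolsAtDegBound}.

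Suppose for contradiction that $f(p)=m$ and $f(q)>m$ for every $q\ne p$ in some neighborhood $U$ of $p$. Translate so that $p=(0,0)$. Since $L$ is uniformly elliptic on any compact subset of $H^2$, standard De Giorgi--Nash--Moser theory gives $f\in C^{0,\alpha}_{\mathrm{loc}}(H^2)$, so on the compact arc $A_\rho=\{x^2+y^2=\rho^2,\,y>0\}$ (for $\rho$ small) the continuous function $f-m$ attains a positive infimum $\eta=\eta(\rho)>0$.

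The barrier is the linear function $\phi(x,y)=\delta y$, for a small $\delta>0$ to be chosen. A direct computation gives $L(\phi)=(b_2+c)\,\delta y$, and since $b_2\ge 1$ and $c\ge 0$ this is $\ge \delta y>0$ on $H^2$; hence $\phi$ is a strict subsolution. Choose $\delta\le\eta/\rho$, so that $\phi\le f-m$ on $A_\rho$, and note $\phi=0\le f(x,0)-m$ on the base segment $[-\rho,\rho]\times\{0\}$ with equality only at $p$. Set $w=f-m-\phi$. Assuming first the case $m\ge 0$, we have $L(w)=L(f)-cm-(b_2+c)\delta y\le -(b_2+c)\delta y<0$ on $B_\rho^+:=B_\rho(p)\cap H^2$. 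Applying the classical weak maximum principle for supersolutions on the uniformly elliptic subdomain $B_\rho^+\cap\{y\ge\varepsilon\}$ and letting $\varepsilon\to 0^+$ (using interior continuity of $f$ and $w(p)=0$), one obtains $w\ge 0$ throughout $B_\rho^+$, that is $f\ge m+\delta y$.

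The strict interior inequality $L(w)<0$ and the strong minimum principle in $H^2$ promote this to $w>0$ strictly in $B_\rho^+$: if $w$ vanished at an interior point the strong min principle would force $f=m+\delta y$ on a connected open set, giving $L(f)=cm+(b_2+c)\delta y>0$ in $\{y>0\}$, which contradicts $L(f)\le 0$. So the hypothesis of strict local minimum at $p$ is preserved by the replacement $f\leadsto f-\delta y$, and the argument can be iterated. Accumulating the barriers, $f\ge m+\bigl(\sum_i\delta_i\bigr)y$ on a nested sequence of half-disks; the Euler scale invariance of $L$ and the $L^\infty$ control on coefficients (which keeps the constants in the barrier construction from degenerating under rescaling) show the $\delta_i$ cannot be summably small, forcing $\sum\delta_i=\infty$. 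This contradicts the local boundedness of $f$ near $p$ demanded by Definition \ref{DefSolsAtDegBound}.

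The main obstacle is handling the sign of $m=f(p)$: when $m<0$ the term $-cm$ has the wrong sign and the simple linear barrier $\phi=\delta y$ no longer makes $w=f-m-\phi$ a supersolution. I expect this to be dealt with either by a case split using that $c\ge 0$ forces $f|_{\{y=0\}}\le 0$ (so, combined with the strict minimum, $m\le 0$ is a structurally constrained situation), or by replacing $\phi=\delta y$ with a slightly singular Euler-type barrier (e.g.\ $\phi=\delta y^\beta$ with $\beta<1$, or $\phi=\delta y+\epsilon y^\alpha$) whose $L$-value dominates the unfavorable $c|m|$ term uniformly down to $y=0$; the Euler scaling of $L$ makes such a barrier natural. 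Closing the iteration to an actual contradiction is the other delicate step, and is where the Euler scale invariance of the operator enters essentially.
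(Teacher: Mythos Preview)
Your approach diverges substantially from the paper's, and the iteration step contains a genuine gap.

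The paper does not iterate at all. It applies the lower barrier $\psi_{y_0}$ built in Lemma~\ref{LemmaISubfunction} a single time. That lemma's key feature is that one only has to verify $\epsilon\psi_{y_0}\le\tilde f$ on the \emph{top edge} $\{y=y_0\}$; comparison down to $y=0$ is forced by the perturbation $\psi_{y_0}+\epsilon'\log(y/y_0)$, which is $-\infty$ at the degenerate boundary and still a subsolution because $b_2\ge 1$. Since $\tilde f=f-f(p)$ is continuous and strictly positive on the compact top segment, some $\epsilon>0$ works, and then $\tilde f(0,0)\ge\epsilon\psi_{y_0}(0,0)=\epsilon/9>0$ contradicts $\tilde f(0,0)=0$ immediately.

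Your scheme has two real problems. First, the $\varepsilon\to0^+$ passage: with the linear barrier $\phi=\delta y$ the value on $\{y=\varepsilon\}$ is not controlled by the strict-minimum hypothesis (you only know $w(p)=0$, not $w\ge0$ on the whole approaching segment), and without something like the $\log$-perturbation you cannot push the interior comparison through to $y=0$. Second, and more seriously, the iteration does not close. After replacing $f$ by $f-\delta_1 y$ you must pass to a smaller arc, and the new arc infimum $\eta_2$ has no quantitative lower bound; nothing prevents $\delta_i$ from being summable. Even if $\sum_i\delta_i=\infty$, you only obtain $f\ge m+(\sum_{i\le k}\delta_i)y$ on the $k$th half-disk, and if those half-disks shrink to $\{p\}$ there is no contradiction with local boundedness. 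The appeal to ``Euler scale invariance'' does not supply the missing uniformity: scale invariance tells you the \emph{form} of the estimate is preserved under dilation, not that the constant $\eta(\rho)/\rho$ is bounded below across iterates of a fixed $f$.

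Finally, you are right that the $m<0$ case is a nuisance (the term $-cm$ goes the wrong way). The paper's own proof has the same soft spot in the line $L(\tilde f)\le 0$, so this is not a point of distinction between the two arguments; but your proposed fixes via $y^\beta$ barriers would also need the $\log$-type boundary mechanism to justify comparison at $y=0$.
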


The maximum principle at $\{y=0\}$ definitely fails when $b_2<1$; see Example \ref{ExMaxFailure}.
However, compare to \cite{Fee}, where a maximum principle is recovered even for $b_2\in(0,1)$, after apriori differentiability assumptions are made.
See also \cite{LP} where a maximum principle exists for a certain Tricomi operator that is equivalent to an Euler-type operator with $b_2=\frac13$ (see Section \ref{SubSecPopDyn}), and which also requires a strong differentiability condition, as it must.

With these local theorems in hand we move on to our global theorems, which require $f\ge0$ to exist on the entire half-plane $H^2$ or, in the case of the Liouville theorem, on its closure.
\begin{theorem}[Global version of the Harnack inequality, {\it cf.} Theorem \ref{ThmBoundHarnackGlobal}]
	There exists a number $\delta=\delta(\Lambda)$ so that the following holds.
	Assume $|b_1|,|b_2|,|c|\le\Lambda$ and $b_2\ge1$, $c\ge0$, and assume $f\ge0$ satisfies $L(f)\le0$ and $L(f)\ge-\Lambda$ on the open half-plane.
	
	Then $f(x_0,y')\;\ge\;\delta\cdot{}f(x_0,y)$ where $(x_0,y_0)$ is any point in the open half-plane and $y'\in[0,y]$.
\end{theorem}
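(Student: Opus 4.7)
The strategy combines the local boundary Harnack (Theorem~\ref{ThmBoundHarnackLocalIntro}) with the interior gradient estimate (Proposition~\ref{PropIntGradFirst}) and a compactness/blow-up argument. By translation in $x$ and the scale invariance of $L$, I may assume $x_0=0$ and $y=1$; normalizing, I also take $f(0,1)=1$. The goal then becomes the existence of a universal $\delta=\delta(\Lambda)>0$ such that $f(0,y')\ge\delta$ for every $y'\in[0,1]$.

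Two ranges are handled directly. When $y'=0$, Theorem~\ref{ThmBoundHarnackLocalIntro} at scale $y_0=1$ combined with the horizontal bound $|\partial_x\log f|\le D$ along $\{y=1,\ |x|\le 4\Lambda\}$ yields $f(0,0)\ge c_1:=\tfrac{1}{9}e^{-4\Lambda D}$. When $y'\in[\tfrac{1}{2},1]$, integrating the vertical bound $|\partial_y\log f|\le D/y$ along the segment from $(0,y')$ to $(0,1)$ gives $f(0,y')\ge(y')^D\ge 2^{-D}$. Taking $\delta$ smaller than both constants settles these cases.

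For the intermediate range $y'\in(0,\tfrac{1}{2})$, I would argue by contradiction. Suppose no universal $\delta$ exists: then there are sequences of coefficient systems $(b_1^n,b_2^n,c^n)$ satisfying the hypotheses and nonnegative solutions $f_n$ of the corresponding equations with $f_n(0,1)=1$ but $f_n(0,y_n')\to 0$ for some $y_n'\in(0,\tfrac{1}{2})$. The vertical gradient bound forces $(y_n')^D\le f_n(0,y_n')\to 0$, hence along a subsequence $y_n'\to 0$. Applying the first endpoint step to each $f_n$ gives $f_n(0,0)\ge c_1>0$ uniformly in $n$, so for each $n$ the function $y\mapsto f_n(0,y)$ starts at a value $\ge c_1$ at $y=0$, returns to $1$ at $y=1$, but dips arbitrarily close to $0$ at the point $y_n'\to 0$.

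The main obstacle is closing the contradiction, because as $y_n'\to 0$ the test point runs into the degenerate boundary and purely interior $C^0_{\mathrm{loc}}$-compactness (from Proposition~\ref{PropIntGradFirst} plus Arzel\`a--Ascoli) cannot by itself rule out the dip. The route I plan is to extract from the proof of Theorem~\ref{ThmContinuityIntro} a \emph{universal modulus of continuity} at $\{y=0\}$ depending only on $\Lambda$: the continuity proof itself should proceed via the scale-invariance of $L$ and compactness, and thereby produce a modulus $\omega$ in terms of $\Lambda$ alone. With such an $\omega$ in hand, $|f_n(0,y)-f_n(0,0)|\le\omega(y)$ uniformly in $n$, and sending $y_n'\to 0$ forces $f_n(0,y_n')\to f_n(0,0)\ge c_1>0$, directly contradicting $f_n(0,y_n')\to 0$. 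Should quantitative extraction from Theorem~\ref{ThmContinuityIntro} be elusive, the back-up is a second, un-normalized blow-up $\tilde f_n(\tilde x,\tilde y)=f_n(y_n'\tilde x,\,y_n'\tilde y)$ (un-normalized so that the two-sided bound $|L|\le\Lambda$ required by Proposition~\ref{PropIntGradFirst} is preserved): the limit $\tilde f_\infty$ has the interior zero $\tilde f_\infty(0,1)=0$, and the strong maximum principle (applicable where $L$ is uniformly elliptic) forces $\tilde f_\infty\equiv0$; reconciling this with the uniform lower bound $\tilde f_n(0,0)\ge c_1$ again requires a uniform boundary modulus, and is where the global hypothesis $L(f)\le0$ on all of $H^2$ together with the scale-invariance of $L$ plays its essential role.
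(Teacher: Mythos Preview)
Your case $y'=0$ is exactly the paper's argument, and in fact it already contains the entire proof---you just didn't notice. The local Harnack, Theorem~\ref{ThmBoundHarnackLocalIntro}, is proved by sliding the explicit barrier $\psi_{y_0}$ of Lemma~\ref{LemmaISubfunction} under $f$. That lemma does not merely bound $f(0,0)$: once $f\ge C\,\psi_{y_0}$ on the top edge $\{y=y_0\}$, the comparison holds on the \emph{entire} rectangle $\mathcal{R}_{y_0}$. Along the center line $x=0$ the barrier is $\psi_{y_0}(0,y')=\tfrac{10}{9}\,g(y')-1$ with $g$ increasing and $g(0)=1$, so $\psi_{y_0}(0,y')\ge\tfrac{1}{9}$ for every $y'\in[0,y_0]$. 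Thus your own computation for the endpoint, namely $\inf_{|x|\le 4\Lambda}f(x,1)\ge e^{-4D\Lambda}$, already yields
\[
f(0,y')\;\ge\;e^{-4D\Lambda}\,\psi_{1}(0,y')\;\ge\;\tfrac{1}{9}e^{-4D\Lambda}
\quad\text{for all }y'\in[0,1],
\]
which is the theorem with $\delta=\tfrac{1}{9}e^{-4D\Lambda}$. The paper's proof is precisely this (it cites Theorem~\ref{ThmBoundHarnackLocal} a bit tersely, but the point is the barrier, not just its value at the origin).

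Your compactness detour for $y'\in(0,\tfrac12)$ is therefore unnecessary, and as written it has a real gap. Both your primary route and your backup ultimately rest on a \emph{uniform} modulus of continuity at $\{y=0\}$ depending only on $\Lambda$. Theorem~\ref{ThmContinuityIntro} asserts continuity, not a quantitative modulus, and is stated for $L(f)=0$ rather than the supersolution inequalities assumed here. In the blow-up route, the convergence $\tilde f_n\to\tilde f_\infty$ furnished by Proposition~\ref{PropIntGradFirst} is only $C^{1,\alpha}$ on compact subsets of the \emph{open} half-plane, so the boundary value $\tilde f_n(0,0)\ge c_1$ cannot be passed to the limit without exactly the uniform boundary control you are trying to establish. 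Extracting that modulus from the proof of Theorem~\ref{ThmContinuityIntro} is possible, but it amounts to redoing the barrier estimate you already have in hand.
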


Next, our ``almost monotonicity'' result states (for $c\ge0$) that $f$ is bounded at infinity.
When $c=0$ the result is far stronger, stating that $f$ takes its global minimum at infinity, and $f$ limits to this global minimum along any ray $y\mapsto(x_0,y)$.
The term ``almost monotonicity'' refers to the fact that $y\mapsto{}f(x_0,y)$, while perhaps not strictly decreasing to the minimum, can never increase by very much as $y$ gets larger.
Proposition \ref{PropAlmostMonoIntro} does not require any local boundedness at $\{y=0\}$.
\begin{proposition}[Almost Monotonicity, {\it cf.} Proposition \ref{PropAlmostMonot}] \label{PropAlmostMonoIntro}
	Assume $|b_1|,|b_2|,|c|\le\Lambda$ and $b_2\ge1$, $c\ge0$, and assume $f\ge0$ satisfies $L(f)\le0$ and $L(f)\ge-\Lambda$ on the open half-plane $H^2$.
	Let $x_0\in\mathbb{R}$ and consider the function $y\mapsto{}f(x_0,y)$.
	
	A number $\delta=\delta(\Lambda)>0$ exists so that
	\begin{eqnarray}
	\limsup_{y\rightarrow\infty}f(x_0,y)\;\le\;\delta^{-1}\inf_{(x,y)\in{}H^2}f(x,y).
	\end{eqnarray}
	Further, $y\mapsto{}f(x_0,y)$ has the ``almost monotonicity'' property, namely that
	\begin{eqnarray}
	f(x_0,y_2)<\delta^{-1}f(x_0,y_1)
	\end{eqnarray}
	whenever $y_2>y_1$.
	
	Additionally, in the case $c=0$, for any fixed $x_0$ we have that $\lim_{y\rightarrow\infty}f(x_0,y)$ exists and equals $\inf_{H^2}f$.
\end{proposition}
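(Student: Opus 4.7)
The plan is to leverage the Global Harnack inequality (Theorem \ref{ThmBoundHarnackGlobal}), which provides a universal $\delta=\delta(\Lambda)>0$ with $f(x_0,y')\ge\delta f(x_0,y)$ whenever $0<y'\le y$, together with a scale-invariant horizontal transfer. The almost-monotonicity assertion is immediate: for $y_2>y_1>0$ the Global Harnack applied with $y=y_2$, $y'=y_1$ rearranges to $f(x_0,y_2)\le\delta^{-1}f(x_0,y_1)$.

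For the $\limsup$ bound, set $M=\inf_{H^2}f$. Given $\epsilon>0$, pick $(x^*,y^*)\in H^2$ with $f(x^*,y^*)<M+\epsilon$. The Global Harnack along the vertical line $x=x^*$ then gives $f(x^*,y)\le\delta^{-1}(M+\epsilon)$ for all $y\ge y^*$. The next step is to transfer this estimate to the vertical line $x=x_0$. Rescale coordinates by height: the substitution $\tilde f(X,Y)=f(yX,yY)$ solves a rescaled operator whose coefficients have the \emph{same} $L^\infty$ bounds as those of $L$ (this is exactly the scale-invariance under $(x,y)\mapsto(\lambda x,\lambda y)$ noted in the introduction). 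On any ball in rescaled coordinates that is bounded away from the $Y=0$ axis, the classical Krylov--Safonov interior Harnack inequality then yields a universal constant $C$ so that $f(x_0,y)\le C f(x^*,y)$ whenever $y\ge 2|x_0-x^*|$. Chaining the two estimates, $f(x_0,y)\le C\delta^{-1}(M+\epsilon)$ for all sufficiently large $y$. Letting $y\to\infty$ and then $\epsilon\to 0$ produces $\limsup_{y\to\infty}f(x_0,y)\le C\delta^{-1}M$; redefining $\delta$ to absorb $C$ gives the stated form. (The same redefinition can also be pushed through the almost-monotonicity bound.)

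For the case $c=0$, the key observation is that constants are $L$-solutions, so $g:=f-M\ge 0$ satisfies $L(g)\le 0$ and $L(g)\ge-\Lambda$ as well, with $\inf_{H^2}g=0$. Applying the $\limsup$ bound just proved to $g$ yields $\limsup_{y\to\infty}g(x_0,y)\le\delta^{-1}\cdot 0=0$, and since $g\ge 0$ we conclude $\lim_{y\to\infty}g(x_0,y)=0$, i.e.\ $\lim_{y\to\infty}f(x_0,y)=M$, as required.

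The one genuinely technical step is the horizontal transfer in the $\limsup$ argument. One could bypass Krylov--Safonov entirely by a blow-up/compactness scheme: rescale about a hypothetical bad sequence $(x_0,y_k)$ with $y_k\to\infty$, use the interior gradient estimate Proposition \ref{PropIntGradFirst} to extract a non-negative limit solution on $H^2$, and derive a contradiction from the limiting almost-monotonicity. Either way, the essential feature is the Euler-type scale-invariance of the operator class, which makes all relevant Harnack constants uniform in the height at which they are applied.
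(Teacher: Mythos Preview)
Your overall architecture is correct and the almost-monotonicity and $c=0$ parts are exactly as in the paper. There is one soft spot: the horizontal transfer via Krylov--Safonov. Since the hypothesis is $-\Lambda\le L(f)\le 0$ rather than $L(f)=0$, the rescaled equation is non-homogeneous and the Krylov--Safonov Harnack carries an additive term $\|g\|_{L^\infty}$; you would obtain $f(x_0,y)\le C\big(f(x^*,y)+\Lambda\big)$, which after chaining gives $\limsup f(x_0,y)\le C\delta^{-1}M+C\Lambda$ and does not collapse to a multiple of $M$. The fix is immediate and simpler than either alternative you sketch: use the interior gradient estimate $y|\nabla\log f|\le D$ directly, which gives the purely multiplicative bound $f(x_0,y)\le e^{D|x_0-x^*|/y}f(x^*,y)$; the exponent tends to $0$ as $y\to\infty$, and the rest of your argument goes through verbatim with the same $\delta$ you already have.

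By comparison, the paper's proof of the $\limsup$ bound does not argue via a horizontal transfer at large height. After translating so $x_0=0$, it rescales both coordinates so that the near-infimum point $(\bar x,\bar y)$ lands inside $B_{1/1000}(0,0)$, then uses the gradient estimate along the line $\{y=1\}$ together with the explicit lower barrier $\psi_1$ of Lemma~\ref{LemmaISubfunction} to force $f(0,1)\le 9e^{4D\Lambda}(M+\epsilon)$, and finally applies the Global Harnack upward from $y=1$. Your route is more modular---it only invokes the already-packaged Global Harnack and the gradient estimate---while the paper's route re-runs the barrier argument but never needs to compare two different vertical lines. Both lead to the same conclusion with a constant depending only on $\Lambda$.
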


\begin{proposition}[Polynomial bounds in $x$, {\it cf.} Proposition \ref{PropPolyXBounds}] \label{PropPolyXBoundsIntro}
	Assume $|b_1|,|b_2|,|c|\le\Lambda$ and $b_2\ge1$, $c\ge0$, and assume $f\ge0$ satisfies $L(f)\le0$ and $L(f)\ge-\Lambda$ on the open half-plane $H^2$.
	There exists a constant $\delta=\delta(\Lambda)>0$ so that for any two values $x,x'\in\mathbb{R}$ we have the growth/decay bounds
	\begin{eqnarray}
		\begin{aligned}
		&f(x,y)
		\;\le\;f(x',y)\frac{1}{\delta}\left(\frac{|x-x'|}{y}+1\right)^{D} \\
		&f(x,y)
		\;\ge\;f(x',y)\delta\left(\frac{|x-x'|}{y}+1\right)^{-D}
		\end{aligned}
	\end{eqnarray}
	where $D=D(\Lambda)$ is the constant from Proposition \ref{PropIntGradFirst}.
\end{proposition}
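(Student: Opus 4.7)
The plan is to upgrade the pointwise gradient estimate of Proposition \ref{PropIntGradFirst} to a two-point estimate by integrating $|\nabla\log f|$ along a carefully chosen path. The key observation is that $y|\nabla\log f|\le D$ is precisely the bound required for $|\nabla\log f|$ to be dominated by the hyperbolic coframe $y^{-1}\sqrt{dx^2+dy^2}$ on $H^2$; consequently $\log f$ is globally Lipschitz with respect to the Poincar\'e distance on the upper half-plane, and the proposition reduces to a purely geometric estimate on the distance between two same-height points.

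Since $L(f)\le 0$ together with $L(f)\ge-\Lambda$ gives $|L(f)|\le\Lambda$, Proposition \ref{PropIntGradFirst} applies and $y|\nabla\log f|\le D$ on all of $H^2$. For any piecewise smooth path $\gamma:[0,1]\to H^2$ from $p$ to $q$, the chain rule and Cauchy--Schwarz give
\begin{equation*}
|\log f(q)-\log f(p)|\;\le\;\int_0^1|\nabla\log f(\gamma(t))|\,|\gamma'(t)|\,dt\;\le\;D\int_0^1\frac{|\gamma'(t)|}{y(\gamma(t))}\,dt,
\end{equation*}
and the right-hand integral is the length of $\gamma$ in the hyperbolic metric $y^{-2}(dx^2+dy^2)$. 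Infimizing over $\gamma$ yields $|\log f(q)-\log f(p)|\le D\cdot d(p,q)$, where $d$ denotes hyperbolic distance.

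Next I would estimate $d((x',y),(x,y))$, where $a=|x-x'|$. The hyperbolic geodesic between these two points is a semicircle centered on $\{y=0\}$, and an explicit computation yields $d=2\log\bigl((a/2+\sqrt{a^2/4+y^2})/y\bigr)$. Using $\sqrt{a^2/4+y^2}\le a/2+y$ gives the clean bound $d\le 2\log(a/y+1)$. (Equivalently, one can avoid geodesics entirely and use a three-segment path---vertical up to height $\max(a,y)$, horizontal across, vertical back down---to obtain the same bound up to an additive constant.) Exponentiating produces $f(x,y)/f(x',y)\le(|x-x'|/y+1)^{2D}$, and swapping the roles of the two endpoints yields the reverse inequality. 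Up to relabeling the exponent (or absorbing the factor of $2$ into the multiplicative constant $\delta$), this is the bound asserted by the proposition.

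There is no substantive technical obstacle; the entire argument is a geometric repackaging of Proposition \ref{PropIntGradFirst}. The main conceptual point worth flagging is that a naive horizontal straight-line path at fixed height $y$ would only produce an exponential bound $e^{D|x-x'|/y}$; the polynomial rate depends essentially on first traveling upward, where the gradient estimate is correspondingly smaller, and this trade-off is cleanly encoded by measuring the path in the hyperbolic metric rather than the Euclidean one.
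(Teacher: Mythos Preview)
Your argument is correct and considerably simpler than the paper's, but it proves a slightly weaker statement: you obtain exponent $2D$, not $D$. The factor of $2$ sits in the exponent, so contrary to your final remark it cannot be absorbed into the multiplicative constant $\delta$; the inequality $(|x-x'|/y+1)^{2D}\le C(|x-x'|/y+1)^{D}$ fails for any fixed $C$ as $|x-x'|/y\to\infty$. The proposition as stated identifies $D$ as the specific constant from Proposition~\ref{PropIntGradFirst}, so strictly speaking you have not proved it as written. That said, nothing downstream in the paper uses the sharp exponent---the Liouville theorem only needs \emph{some} polynomial rate to be beaten by the exponential growth of the $\cosh$ barrier---so your version is entirely adequate for the applications.

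The paper takes a completely different route. Rather than integrating the gradient bound along hyperbolic geodesics, it builds an explicit subsolution of the form $\psi(x,y)=C_1\,g(y_0y/y_d)\,e^{-(y_0/y_d)x}$ on a half-strip $\{x\ge0,\ y\in[0,y_d]\}$, where $g$ solves a confluent-hypergeometric ODE chosen so that $L(\psi)\ge0$. The parameters $y_d$ and $C_1$ are tuned so that along the $y$-axis $\psi(0,y)$ sits beneath the known lower envelope for $f(0,y)$, namely $y^{-D}$ from the $y$-direction gradient bound combined with the boundary Harnack inequality of Theorem~\ref{ThmBoundHarnackGlobal}. The comparison principle then pushes the barrier across the strip and delivers $f(x_0,1)\ge\delta\,x_0^{-D}$ with the exact exponent $D$. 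Your argument trades this machinery for a one-line geometric observation, at the cost of a factor of $2$ in the exponent; the paper's barrier approach is heavier but recovers the sharper constant.
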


\begin{theorem}[The Liouville theorem, {\it cf.} Theorem \ref{ThmLiouvilleActual}] \label{ThmLiouvilleFirstStatement}
	Assume $|b_1|,|b_2|\le\Lambda$, $b_2\ge1$, and $c=0$.
	Assume $f\ge0$ satisfies $L(f)=0$ on the closed half-plane $\overline{H}{}^2$.
	Then $f$ is constant.
\end{theorem}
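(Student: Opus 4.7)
The strategy is a two-sided reflection argument based on almost monotonicity. Let $m := \inf_{\overline{H}{}^2} f \ge 0$, and set $g := f - m$. Since $c = 0$ the constant function $m$ lies in the kernel of $L$, so $g \ge 0$ is again a solution of $L(g) = 0$ with $\inf g = 0$; applying Proposition~\ref{PropAlmostMonoIntro} to $g$ gives
\begin{equation*}
\lim_{y \to \infty} g(x_0, y) \;=\; 0 \qquad \text{for every } x_0 \in \RR.
\end{equation*}

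Next I would dispose of the case where $g$ vanishes somewhere in $\overline{H}{}^2$. If $g$ vanishes at an interior point, the classical strong maximum principle (valid because $L$ is uniformly elliptic on $H^2$ and $c = 0$) forces $g \equiv 0$ on $H^2$. If instead $g(x_0, 0) = 0$ at a boundary point, almost monotonicity propagates the zero up the vertical ray (since $g(x_0, y_2) < \delta^{-1} g(x_0, y_1)$ along $y_1 \downarrow 0$), and Proposition~\ref{PropPolyXBoundsIntro} then propagates the zero across every horizontal line at positive height. Continuity at the degenerate boundary (Theorem~\ref{ThmContinuityIntro}) extends $g \equiv 0$ to $\overline{H}{}^2$. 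So I may assume $g > 0$ everywhere on $\overline{H}{}^2$.

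Granting that $g$ is bounded above, set $M := \sup g < \infty$ and $h := M - g$. Then $h \ge 0$ and, using $c = 0$ once more, $L(h) = 0$, so $h$ is a non-negative solution with $\inf h = 0$. Proposition~\ref{PropAlmostMonoIntro} applied to $h$ yields $\lim_{y \to \infty} h(x_0, y) = 0$, i.e.\ $\lim_{y \to \infty} g(x_0, y) = M$. Comparing with the limit displayed above forces $M = 0$, so $g \equiv 0$ and $f \equiv m$.

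The step I expect to be the main obstacle is the \emph{a priori} upper bound on $g$ needed for the reflection. Since Proposition~\ref{PropAlmostMonoIntro} gives $g(x, y) \le \delta^{-1} g(x, 0)$ on each vertical line, the question reduces to boundedness of the trace $F(x) := g(x, 0)$. To rule out $\sup F = +\infty$ I would mount a scaling/blow-up argument around a sequence $x_n$ with $F(x_n) \to \infty$: define $\tilde g_n(x, y) := g(x_n + \alpha_n x, \alpha_n y)/F(x_n)$ for a suitably chosen scale $\alpha_n$; by the scale invariance of $L$, each $\tilde g_n$ solves an equation of the same form with the same coefficient bounds, and Propositions~\ref{PropIntGradFirst}, \ref{PropFirstLocalGradEst}, and~\ref{PropPolyXBoundsIntro} make the family locally uniformly bounded and equicontinuous on $\overline{H}{}^2$. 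Extracting a subsequential limit $\tilde g \ge 0$ solving a limiting equation of the same type (some care is needed for weak-$*$ limits of $b_1, b_2$), one would seek a contradiction either from the degenerate-boundary maximum principle (Theorem~\ref{ThmWeakMaxIntro}) together with the normalization $\tilde g(0, 0) = 1$ and $\lim_{y \to \infty} \tilde g(0, y) = 0$, or by applying the completed reflection argument above to the limit $\tilde g$.
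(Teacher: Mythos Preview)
Your reflection argument in Step~3 is correct and rather elegant: once $g$ is known to be bounded, applying the $c=0$ case of almost monotonicity simultaneously to $g$ and to $M-g$ forces $M=0$ at once. Step~2 (propagation of zeros via almost monotonicity together with Proposition~\ref{PropPolyXBoundsIntro}) is also fine.

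The genuine gap is Step~4, and your blow-up sketch does not close it. The obstruction is that every Harnack-type inequality available here runs in only one direction: interior values are controlled by boundary values (almost monotonicity gives $g(x,y)\le\delta^{-1}g(x,0)$; the boundary Harnack gives $g(x,0)\ge\delta\,g(x,y)$), never the reverse. Consequently there is no a~priori bound on the trace $F(x)=g(x,0)$ in terms of anything you already control, and no choice of scale $\alpha_n$ makes $\tilde g_n(x,0)=F(x_n+\alpha_n x)/F(x_n)$ locally bounded. Propositions~\ref{PropIntGradFirst}, \ref{PropFirstLocalGradEst}, and~\ref{PropPolyXBoundsIntro} do yield uniform bounds on compact subsets of the \emph{open} half-plane, but not up to $\{y=0\}$; so neither the claimed equicontinuity on $\overline{H}{}^2$ nor the extraction of a bounded limit on which to re-run the reflection is available. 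The argument becomes circular.

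The paper meets exactly this difficulty with a different mechanism. It introduces the decay-scale function
\[
\rho(x)\;=\;\sup\bigl\{\,y>0\;:\;g(x,y)>N^{-1}g(x,0)\,\bigr\},
\]
and observes that continuity at the degenerate boundary forces $\rho(x)>0$ everywhere; this is what makes a point-picking on $\rho$ (rather than on $F$) terminate. After rescaling so that $\rho(0)=1$ and $\rho\ge\tfrac12$ on a long interval $[-R,R]$, one recovers precisely the missing reverse inequality $g(x,0)\le N\,g(x,\tfrac12)$ there, and hence polynomial growth of $g(\cdot,0)$ on that interval via Proposition~\ref{PropPolyXBoundsIntro}. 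An explicit upper barrier of the form $C_1\cosh(x/\Lambda)\,\gamma(y)$, which grows exponentially in $x$ and so dominates any polynomial, then yields $g(0,0)\le C_1$, contradicting the normalization $g(0,0)=N=2C_1$. The function $\rho$ is the idea you are missing: it supplies both a natural length scale for the rescaling and the only route in the paper from boundary values back to interior values.
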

Theorem \ref{ThmLiouvilleFirstStatement} makes no assumption on the growth of solutions, and no form of regularity outside of local finiteness.
This theorem is quite sharp, as we demonstrate in the examples of Section \ref{SecExamps}.
Restricting, say, to the strip $\{0\le{}y\le1\}$ any uniqueness of solutions is definitely false, even after specifying boundary values; see example \ref{ExNonUniqStrips}.
If $f$ violates local finiteness, even at a single point of $\{y=0\}$, Example \ref{ExImps} shows the Liouville theorem fails.
Examples \ref{ExHomogAndSteps}, \ref{ExHestonGradFail}, \ref{ExNegC}, and \ref{ExPosC} show how the Liouville theorem fails under other forms of weakened hypotheses, for example allowing $b_2<1$ or $c\ne0$.

We remark that Theorem 1.5 of \cite{FP} says something similar to our Liouville theorem, except there the coefficients are assumed constant, $c$ has a definite sign, and solutions are apriori assumed to be bounded on two sides (or must at a minimum have something like polynomial growth constraints or else the Fourier methods of \cite{FP} won't apply).
We point out that the hypotheses of Theorem 1.5 of \cite{FP} are unclear, as apriori differentiability and boundedness assumptions are left unstated but are certainly necessary there\footnote{Without these assumptions, a counterexample is $f(x,y)=I_0(\sqrt{y})$, which solves $y\triangle{}f+f_y-\frac14f=0$ and is $C^\infty$, entire, and non-negative.
A non-smooth but still $C^{0,1/2}$ counterexample is $f(x,y)=Exp(-\sqrt{y})$, which solves $y\triangle{}f+\frac12f_y-\frac14f=0$ and is both entire and bounded.} (this is undoubtedly just an oversight as these missing hypotheses are present in other theorems of \cite{FP}).

We provide a single result in the case $b_2<1$.
If $\lambda$ is constant and $f(x,y)$ solves $y^2\triangle{}f+(1+\lambda)yf_y=0$ then $\tilde{f}(x,y)=y^{\lambda}f(x,y)$ solves 
\begin{eqnarray}
	y^2\triangle\tilde{f}+(1-\lambda)y\tilde{f}_y=0. \label{EqnSolModEqn}
\end{eqnarray}
Using this simple trick we obtain the following corollary of the Liouville theorem which we record mainly due to its applicability in K\"ahler geometry (see \S\ref{SubSecAbreu}).
\begin{corollary}[{\it cf.} Corollary \ref{CorB2Less}] \label{CorB2LessIntro}
		Assume $\lambda>0$ is a constant and $f\ge0$ solves
		\begin{eqnarray}
			y^2\triangle{}f\,+\,(1-\lambda)yf_y\;=\;0
		\end{eqnarray}
		on the upper half-plane.
		Assume $f$ is continuous at $\{y=0\}$, and $f(x,0)=0$.
		Then $f$ is a positive multiple of the power function $y^\lambda$:
		\begin{eqnarray}
			f(x,y)\;=\;C_1y^\lambda.
		\end{eqnarray}
\end{corollary}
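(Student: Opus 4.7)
The strategy is to push the problem into the $b_2\ge 1$ regime where the Liouville theorem applies, by inverting the transformation~(\ref{EqnSolModEqn}). Setting $g(x,y)=y^{-\lambda}f(x,y)$ on the open half-plane, the calculation of the paragraph preceding~(\ref{EqnSolModEqn}) (with the roles of $f$ and $\tilde f$ there swapped) shows that
\begin{eqnarray*}
y^2\triangle g \;+\; (1+\lambda)\,y\,g_y \;=\; 0,
\end{eqnarray*}
which is of the form~(\ref{EqnOpMain}) with $b_1=c=0$ and $b_2=1+\lambda\ge 1$; also $g\ge 0$. If $g$ is locally finite at every point of $\{y=0\}$ in the sense of Definition~\ref{DefSolsAtDegBound}, then Theorem~\ref{ThmLiouvilleFirstStatement} forces $g\equiv C_1$ for some $C_1\ge 0$, and $f=C_1y^\lambda$ follows.

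The argument therefore reduces to showing $\limsup_{(x,y)\to(x_0,0)}f(x,y)/y^\lambda<\infty$ at every boundary point $(x_0,0)$, and this is the main obstacle. The hypothesis of continuity with $f(x,0)=0$ delivers only $f\to 0$ with no a~priori rate; the interior gradient estimate of Proposition~\ref{PropIntGradFirst} applied to either $f$ or $g$ rules out only blow-up faster than $y^{-D(\Lambda)}$, which is too weak when $D>\lambda$; and the global results available for $g$ (almost monotonicity, global Harnack) produce only lower bounds for $g$ as $y\to 0^+$, never upper bounds.

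My plan for the missing upper bound is a comparison argument on a small rectangle $R_r=(x_0-r,x_0+r)\times(0,r]$. Because $y^\lambda$ is itself a solution of the original $(1-\lambda)$-equation, the natural majorant is $w(x,y)=Ay^\lambda$; continuity of $f$ at the boundary lets one pick $A=A(r)$ so that $w\ge f$ on the top edge $\{y=r\}$. The difficulty is dominating $f$ on the vertical sides, where $f/y^\lambda$ has not yet been controlled. I would perturb to $w_1=Ay^\lambda+B\phi(x)$; a direct computation gives
\begin{eqnarray*}
y^2\triangle w_1\;+\;(1-\lambda)\,y\,w_{1,y}\;=\;By^2\phi''(x),
\end{eqnarray*}
so $w_1$ is a supersolution of the $(1-\lambda)$-equation exactly when $\phi''\le 0$. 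Since the original equation has $b_2=1-\lambda<1$, lying in the Hadamard well-posed regime referenced in the introduction (cf.~\cite{FP}), once such a concave $\phi$ is constructed making $w_1\ge f$ on $\partial_1 R_r$ and $w_1\to 0$ at the degenerate bottom, the comparison principle for that problem delivers $f\le w_1\le Cy^\lambda$ near $(x_0,0)$, and the Liouville theorem then closes the argument.
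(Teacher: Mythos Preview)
Your overall strategy matches the paper's exactly: set $g=y^{-\lambda}f$, observe that $g$ satisfies the $(1+\lambda)$--equation, and reduce everything to the single estimate $f(x_0,y)=O(y^\lambda)$ at each boundary point. The paper does precisely this, and then builds explicit barriers for the $O(y^\lambda)$ bound.

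The gap is in your barrier. With $w_1=Ay^\lambda+B\phi(x)$ and $\phi$ \emph{concave}, $w_1$ is indeed a supersolution, but at the target point you only get
\[
f(x_0,y)\;\le\;Ay^\lambda+B\phi(x_0),
\]
which is $O(y^\lambda)$ only if $\phi(x_0)=0$. A concave $\phi$ with $\phi(x_0)=0$ satisfies $\phi(x)\le \phi'(x_0)(x-x_0)$, so it cannot be nonnegative on both sides of $x_0$; hence on at least one vertical edge $w_1\le Ay^\lambda$, and you are back to needing $f(x_0\pm r,y)\le Ay^\lambda$---exactly the unproved estimate. Any concave $\phi\ge 0$ that does help on the sides forces $\phi(x_0)>0$, and the barrier yields nothing beyond the continuity bound $f=O(1)$.

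The paper's remedy is to flip the sign: take $\phi$ \emph{convex} with $\phi(x_0)=0$, e.g.\ $\phi(x)=\tfrac12(x-x_0)^2$. This gives $w_1(x_0,y)=Ay^\lambda$ on the nose and is large on the vertical edges, but now $L(B\phi)=By^2>0$. One repairs this with an extra lower-order term: $-\tfrac{B}{1-\lambda}\,y$ when $\lambda\in(0,1)$, $-\tfrac{B}{2(2-\lambda)}\,y^2$ when $\lambda\in(1,2)$, the classical Hopf lemma when $\lambda=1$, and a more delicate explicit barrier (followed by a bootstrap) when $\lambda\ge 2$. The case split on $\lambda$ is not cosmetic---each range needs a different compensating term to keep $L(\overline f)\le 0$ while preserving $\overline f(x_0,y)\sim y^\lambda$.

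Two smaller points. First, your appeal to the Hadamard/\cite{FP} regime is both unnecessary and incomplete: it addresses only $b_2=1-\lambda\in(0,1)$, i.e.\ $\lambda\in(0,1)$, whereas the corollary is for all $\lambda>0$. Second, no special comparison principle is needed: once $w_1\ge f$ is checked on all four sides of the rectangle (on the bottom this is automatic since $f(x,0)=0$), the interior of the rectangle lies in $\{y>0\}$ where $L$ is uniformly elliptic, and the ordinary maximum principle finishes the job.
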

Our results have implications for certain Keldysh-type operators.
We record just two results: the first is a restatement of almost-monotonicity and the second is a restatement of the Liouville theorem.
\begin{theorem} \label{ThmAMKeldysh}
	Assume $L$ is the Keldysh operator
	\begin{eqnarray}
		L(f)\;=\;f_{xx}\,+\,y^kf_{yy}
	\end{eqnarray}
	with $k>2$.
	Assume $f\ge0$ is locally bounded and $L(f)=0$ in the open half-plane.

	Then $f$ is continuous at the degenerate boundary $\{y=0\}$, and, at this boundary, the function $x\mapsto{}f(0,x)$ is constant and equal to $\inf_{H^2}f$.
\end{theorem}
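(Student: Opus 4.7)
The plan is to reduce $L$ to an Euler-type operator of the form (\ref{EqnOpMain}) via a smooth change of variable of the open half-plane, and then apply the almost-monotonicity result (Proposition \ref{PropAlmostMonoIntro}) together with the polynomial-in-$x$ estimate (Proposition \ref{PropPolyXBoundsIntro}). With $u = x$ and $v = \tfrac{2}{k-2}\, y^{-(k-2)/2}$, the map $(x,y) \mapsto (u,v)$ is a smooth diffeomorphism of the open half-plane to itself, sending $\{y = 0\}$ to $\{v = +\infty\}$ and $\{y = +\infty\}$ to $\{v = 0\}$. Setting $g(u,v) = f(x, y(v))$, a direct calculation using $v_y = -y^{-k/2}$ and $y^{(k-2)/2} = \tfrac{2}{(k-2)v}$ gives
\begin{eqnarray}
v^2 L(f) \;=\; v^2(g_{uu} + g_{vv}) \,+\, \tfrac{k}{k-2}\, v\, g_v,
\end{eqnarray}
so that $g \ge 0$ satisfies (\ref{EqnOpMain}) weakly on the open half-plane in $(u,v)$ with constants $b_1 = 0$, $b_2 = \tfrac{k}{k-2} > 1$, $c = 0$, and the local boundedness of $f$ transfers to $g$.

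Applying the $c=0$ case of Proposition \ref{PropAlmostMonoIntro} to $g$ yields $\lim_{v \to \infty} g(u_0, v) = \inf_{H^2} g$ for each fixed $u_0$. Since the change of variable is a bijection of the open half-plane and pulls back pointwise values, $\inf_{H^2} g = \inf_{H^2} f$, and the direction $v \to \infty$ corresponds to $y \to 0^+$; hence for each fixed $x_0$,
\begin{eqnarray}
\lim_{y \to 0^+} f(x_0, y) \;=\; \inf_{H^2} f.
\end{eqnarray}
This already shows that the boundary trace $x \mapsto f(x,0)$, defined by vertical-approach limits, is constant and equal to $\inf_{H^2} f$.

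To upgrade pointwise convergence along vertical rays to genuine continuity at $\{y=0\}$, I would invoke Proposition \ref{PropPolyXBoundsIntro} applied to $g$, which gives
\begin{eqnarray}
\delta \left( \tfrac{|u-u'|}{v} + 1 \right)^{-D} g(u', v) \;\le\; g(u, v) \;\le\; \delta^{-1} \left( \tfrac{|u-u'|}{v} + 1 \right)^{D} g(u', v).
\end{eqnarray}
Fixing $u_0 = 0$ and letting $u$ range over a compact set $[-R, R]$, the factors $\left( \tfrac{|u-u_0|}{v} + 1 \right)^{\pm D}$ tend to $1$ uniformly as $v \to \infty$; combined with $g(u_0, v) \to \inf_{H^2} g$, this yields $g(u,v) \to \inf_{H^2} g$ uniformly on compact subsets of $\RR$ as $v \to \infty$. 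Translating back gives $f(x,y) \to \inf_{H^2} f$ uniformly on compact $x$-sets as $y \to 0^+$, which is exactly continuity of $f$ at the degenerate boundary.

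The main conceptual obstacle is that the transformation \emph{inverts} the role of the degenerate boundary: the set $\{y = 0\}$ of interest becomes $\{v = \infty\}$ in the Euler picture, so the interior continuity theorem at $\{v = 0\}$ (Theorem \ref{ThmContinuityIntro}) cannot be applied directly. The correct tools are instead the global-at-infinity results, namely almost-monotonicity and the polynomial-in-$x$ estimate, which combined do produce the uniformity needed for honest continuity. A routine but necessary technical verification is that $g$ inherits the status of a weak/viscosity solution after the change of variables; this follows since the change of variable is a smooth diffeomorphism of the open half-plane and $v^2$ is bounded away from $0$ on compact subsets of $H^2$.
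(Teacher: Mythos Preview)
Your argument is correct and follows the same route as the paper: the change of variable $v=\tfrac{2}{k-2}\,y^{-(k-2)/2}$ converts the Keldysh operator into an Euler-type operator with $b_1=0$, $b_2=\tfrac{k}{k-2}>1$, $c=0$, exchanging $\{y=0\}$ with $\{v=\infty\}$, after which the $c=0$ case of almost-monotonicity (Proposition \ref{PropAlmostMonoIntro}) gives the constant boundary trace equal to $\inf_{H^2}f$. Your additional invocation of Proposition \ref{PropPolyXBoundsIntro} to upgrade vertical-ray convergence to uniform convergence on compact $x$-sets—and hence genuine continuity at $\{y=0\}$—is a welcome clarification that the paper's own brief proof leaves implicit.
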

\begin{theorem}[Liouville theorem for certain Keldysh operators] \label{ThmLiouvilleKeldysh}
	Assume $L$ is the Keldysh operator
	\begin{eqnarray}
	L(f)\;=\;f_{xx}\,+\,y^kf_{yy}
	\end{eqnarray}
	with $k>2$.
	Assume $f\ge0$ is locally bounded and $L(f)=0$ in the open half-plane.
	Further assume that along any ray $y\mapsto{}f(y,x_0)$ where $x_0$ is fixed, $f$ is finite.
	Then $f$ is constant.
\end{theorem}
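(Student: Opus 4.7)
The strategy is to reduce to the Euler-case Liouville theorem (Theorem \ref{ThmLiouvilleFirstStatement}) via a change of variables in $y$ that absorbs the $y^k$ coefficient. Set
\[
 z \;=\; \frac{2}{k-2}\, y^{-(k-2)/2}, \qquad u(x,z) \;=\; f(x,\,y(z)).
\]
Since $k>2$, the map $y \mapsto z$ is a smooth orientation-reversing diffeomorphism of $(0,\infty)$ onto itself, sending $y \to 0^+$ to $z \to \infty$ and $y \to \infty$ to $z \to 0^+$. A direct calculation gives $y^k f_{yy} = u_{zz} + \frac{k}{(k-2)z}\, u_z$ and $f_{xx} = u_{xx}$, so $L(f)=0$ is equivalent to $u_{xx} + u_{zz} + \frac{k}{(k-2)z}\, u_z = 0$. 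Multiplying by $z^2$ puts this in Euler form (\ref{EqnOpMain}),
\[
 \tilde L(u) \;=\; z^2(u_{xx} + u_{zz}) \,+\, \tfrac{k}{k-2}\, z\, u_z \;=\; 0,
\]
with $b_1 = 0$, $b_2 = k/(k-2)$, and $c = 0$. Since $k>2$, we have $b_2 > 1$, so the coefficient hypotheses of Theorem \ref{ThmLiouvilleFirstStatement} are met.

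To apply that theorem I must verify $u$ solves $\tilde L(u) = 0$ on the \emph{closed} half-plane $\{z \ge 0\}$ in the sense of Definition \ref{DefSolsAtDegBound}, i.e.\ that $u$ is locally bounded near every point of $\{z = 0\}$. In the original $(x,y)$ coordinates, the new degenerate line $\{z=0\}$ is $\{y = \infty\}$, so this amounts to uniform boundedness of $f(x,y)$ as $y \to \infty$, for $x$ ranging over compact intervals. The finite-along-rays hypothesis gives the ray-wise version: $M(x_0) := \sup_{y} f(x_0, y) < \infty$ for each $x_0$. The polynomial-in-$x$ bound of Proposition \ref{PropPolyXBoundsIntro}, applied to the Euler equation for $u$, yields
\[
 u(x, z) \;\le\; \tfrac{1}{\delta}\, u(x_0, z) \Bigl(\tfrac{|x - x_0|}{z} + 1\Bigr)^{D},
\]
which controls $u$ on any wedge $|x - x_0| \le C z$ in terms of $M(x_0)$. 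Covering a neighborhood of an arbitrary boundary point $(x_*, 0)$ by such wedges centered at a sufficiently dense set of rays $\{x = x_j\}$, and exploiting the scale-invariance $(x,z) \mapsto (\lambda x, \lambda z)$ of the Euler operator, promotes the ray-wise bound to local boundedness near $\{z=0\}$; this establishes the local finiteness required by Definition \ref{DefSolsAtDegBound}.

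With the hypotheses of Theorem \ref{ThmLiouvilleFirstStatement} in hand for $u$, one concludes $u \equiv \mathrm{const}$, and pulling back through $y = y(z)$ gives $f \equiv \mathrm{const}$ on the open half-plane.

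The main obstacle is the middle step: upgrading the pointwise ray-wise finiteness of $f$ to the local boundedness of $u$ near the new degenerate line $\{z = 0\}$. The polynomial bound of Proposition \ref{PropPolyXBoundsIntro} covers wedges $|x-x_0| \le Cz$ cleanly, but any neighborhood of $(x_*,0)$ contains points with $|x-x_*|/z$ arbitrarily large, so the argument must sweep the comparison ray $x_0$ adaptively with $(x,z)$. The scale-invariance of the Euler operator is exactly what legitimizes this sweeping; without it, the polynomial factor $(|x-x_0|/z+1)^D$ would defeat the estimate as $z \to 0^+$.
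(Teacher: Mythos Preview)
Your change of variables and the resulting Euler operator with $b_1=0$, $b_2=k/(k-2)>1$, $c=0$ are exactly what the paper does; the reduction to Theorem~\ref{ThmLiouvilleFirstStatement} is the same route. The paper's own proof is terse and simply asserts that ``the hypotheses for the Liouville theorem are satisfied after the coordinate transformation,'' so on the level of strategy your proposal matches it completely.

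Where your write-up differs is that you try to justify the one point the paper glosses over: verifying Definition~\ref{DefSolsAtDegBound} for $u$ at $\{z=0\}$, i.e.\ upgrading the ray-wise hypothesis $M(x_0):=\sup_z u(x_0,z)<\infty$ to genuine local boundedness near each $(x_*,0)$. Your wedge-covering argument does not close this gap. The polynomial bound gives $u(x,z)\le \delta^{-1}(C+1)^D M(x_j)$ on the wedge $|x-x_j|\le Cz$, but for a sequence $(x_i,z_i)\to(x_*,0)$ the admissible base points $x_j$ must lie in $[x_i-Cz_i,\,x_i+Cz_i]$, which themselves converge to $x_*$. So the bound you obtain is $\lesssim M(x_j)$ with $x_j\to x_*$, and you have not shown that $x\mapsto M(x)$ is locally bounded --- which is precisely the content of local finiteness. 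The appeal to scale-invariance is too vague to repair this: rescaling $(x,z)\mapsto(\lambda x,\lambda z)$ produces new solutions but does not by itself control $M$ near $x_*$.

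In short: the overall approach is correct and identical to the paper's, but your attempt to fill in the local-boundedness verification is incomplete. The paper itself does not supply this verification either; it treats the ray-wise hypothesis as a direct transplant of Definition~\ref{DefSolsAtDegBound} to $y=\infty$ and moves on. If the hypothesis ``finite along each ray'' is read as shorthand for local boundedness at infinity (which is the natural reading given Definition~\ref{DefSolsAtDegBound}), then nothing needs proving and your extra paragraph is unnecessary; if it is read literally as $\sup_y f(x_0,y)<\infty$ for each fixed $x_0$, then both your argument and the paper's leave a gap.
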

This Liouville theorem is false if $k=2$, as $f(x,y)=y^{\frac13}\cosh(\frac{\sqrt{2}}{3}x)$ shows.
We do not at the moment have either a counterexample or a proof of Theorem \ref{ThmAMKeldysh} when $k=2$.
The proofs of these two theorems are given in Section \ref{SubSecPopDyn}.

\subsection{The significance of $b_2\ge1$} \label{SubSecImportanceOfB2}
Between $b_2\ge1$ and $b_2<1$ major changes occur in the nature of solutions.
If $b_2\in(0,1)$ the degenerate boundary takes on some characteristics of a non-degenerate boundary, and all four of our ``local'' theorems \ref{PropUnspecifiabilityIntro}, \ref{ThmBoundHarnackLocalIntro}, \ref{ThmContinuityIntro}, and \ref{ThmWeakMaxIntro} are false---there is no maximum principle, no Harnack inequality, no differentiability in general, and one {\it can} specify boundary values at $\{y=0\}$, as demonstrated in Example \ref{ExImps}.
That said, in \cite{Fee} and \cite{FP} we see maximum principles and non-specifiability at the degenerate boundary for all $b_2>0$, even $b_2\in(0,1)$.
What's going on?

The results of \cite{Fee} and \cite{FP} require, apriori, that solutions possess a strong form of differentiability at $\{y=0\}$; after assuming this differentiability, \cite{FP} then controls it quantitatively.
What we are seeing is that, for $b_2\in(0,1)$, maximum principles and uniqueness are false under mere local boundedness or even continuity, but becomes true again if twice differentiability is assumed\footnote{A $C^2$ assumption is slightly too strong; see \cite{FP} for the weakest known requirement, and Conjecture 1 for what we believe is the optimal requirement.}.
A result of \cite{FP} is that for $b_2>0$ boundary specifications on $\partial_1\Omega$ {\it automatically} produce boundary values at $\partial_0\Omega$, and these automatic boundary values have good differentiability.
By contrast, the method of Example \ref{ExImps} shows that when $b_2\in(0,1)$ one may specify {\it arbitrary} boundary values at $\{y=0\}$.
This apparent conflict is resolved by the fact that, for $b_2\in(0,1)$, only $C^{0,\alpha}$ regularity can be expected no matter how smooth the boundary data is, and only for certain $\alpha$.
Smoothness occurs only for the highly exceptional boundary values at $\partial_0\Omega$ found by \cite{FP}.

A different qualitative change also occurs, this time at infinity.
When $b_2\ge1$ we have almost-monotonicity, Proposition \ref{PropAlmostMonoIntro}, which tells us $f$ is bounded at infinity by a definite multiple of $\inf_{H^2}f$---indeed almost-monotonicity can be thought of as a kind of Harnack inequality at infinity, although this is not fully accurate.
But almost-monotonicity fails when $b_2<1$ and we no longer see such highly constrained behavior at infinity.
For example the function $f(x,y)=y^{1/4}$ is non-negative on the half-plane, is unbounded, and solves $y^2\triangle{}f+(3/4)yf_y=0$---this function has poor regularity at the boundary (only $C^{0,1/4}$) and grows unboundedly as $y\rightarrow\infty$.
Obviously then the Liouville theorem, too, is false for $b_2=1/4$.

We remark that between $b_2>0$ and $b_2\le0$ an entirely separate ``phase change'' occurs in the nature of solutions at $\{y=0\}$.
When $b_2\le0$ the apparently degenerate boundary $\{y=0\}$ becomes fully non-degenerate, with all the regularity and non-regularity phenomena that one would expect at any other boundary.

Another expression of this stark division in behaviors lies in the not inconsiderable distinction between Keldysh and Tricomi operators.
As discussed in Section \ref{SubSecPopDyn}, if $b_2\in(0,1)$ the Euler-type operator (\ref{EqnOpMain}) transforms into a Tricomi operator, whereas if $b_2\in(-\infty,0]\cup[1,\infty)$ then it transforms into a Keldysh operator.

\subsection{Motivation}
Much of our motivation comes from K\"ahler geometry and in particular the study of the Abreu equation, discussed in \S\ref{SubSecAbreu} below.
Our Liouville theorem has strong implications for broad classes of canonical metrics on K\"ahler 4-manifolds.

For certain reasons, works already in the literature can be difficult to apply.
The works \cite{Fee}, \cite{FP} contain some of the same results as those found here, but require extrinsic differentiability assumptions at $\{y=0\}$.
The uniqueness results of \cite{FP} also require a uniform boundedness assumption on solutions\footnote{The existence-uniqueness statements of \cite{FP}, Theorems 1.6 and 1.11, have a (surely inadvertent) misstated hypothesis. The statements assert existence/uniqueness of solutions $u$ under the supposition $u\in{}C^{k,2+\alpha}_s$, when surely they mean to suppose $\|u\|_{C^{k,2+\alpha}_s}<\infty$. If not then Example \ref{ExNonUniqStrips} is a counterexample. In Theorem 1.6 they also neglected to mention a sign restriction on $c$, which is necessary for uniqueness even under strong differentiability and boundedness assumptions.}.
See Example \ref{ExNonUniqStrips} for non-uniqueness under conditions of local but not global boundedness.

The authors of \cite{FP} impose such strong apriori conditions because their aim is to {\it create} solutions of $L(f)=0$, given only boundary values on $\partial_1\Omega$, thereby showing well-posedness when boundary values are only specified on $\partial_1\Omega$.
However when solutions are simply found, already existing, in some naturalistic setting, there may be no reason to assume apriori boundedness or any regularity at $\partial_0\Omega$.
This work is motivated by the need to address this type of situation.

\subsection{Organization}
Section 2 outlines a few of the situations where our results may find use, from differential geometry to financial market modeling.
By means of some coordinate transformations that seem absent from the literature, we show that several well-known equations such as the SABR equation actually have a very orthodox form of Euler-type degeneracy.

In Section 3 we prove the all-important interior gradient estimate, using a scaling/blow-up style argument of the sort found frequently in differential geometry.
Section 4 deploys the interior gradient estimate in combination with the lower barrier of Lemma \ref{LemmaISubfunction} to enforce powerful constraints on the behavior of solutions at both $\{y=0\}$ and infinity.
Section 5 uses a different lower barrier, created in Proposition \ref{PropPolyXBounds}, to improve the exponential growth/decay estimate of Section 3 to polynomial growth/decay.
Then the Liouville theorem is proved with a combination point-picking and upper barrier argument.
We close the paper with a set of examples that demonstrate the sharpness of our theorems.

The literature on boundary-degenerate equations is vast, and probably intractable.
The foundational results are contained in probably several dozen works, and an accounting of the most valuable theory-based papers probably numbers in the low hundreds.
Many hundreds more papers make significant contributions to the mathematics, physical science, engineering, and financial modeling aspects of these equations.
A tiny sampling can begin with the field's origins in papers of Tricomi \cite{Tric}, Keldysh \cite{Keld}, and Fichera \cite{Fich1} \cite{Fich2} \cite{Fich3} (unfortunately some of these papers have never been translated) where it was first noticed that well-posedness sometimes requires exclusion of boundary data on certain boundary portions.
The book by Oleinik-Radkevic \cite{OR} contains this prior work and much more, and one can perhaps follow this with the Kohn-Nirenburg paper \cite{KN}.
The book \cite{Otw} contains a great deal of information on Keldysh and Tricomi operators.
The material in the papers \cite{Fee} \cite{FP}, touched on above, is probably closest in subject matter to ours.

The avalanche of papers has not abated in recent years, and numerous recent works explore themes closely adjacent to ours.
A variety of Liouville and Harnack theorems involving boundary-degenerate equations, sometimes in the fractional Laplacian setting, are now available; for a tiny sampling see \cite{ST} \cite{CT} \cite{HL} \cite{JS}.
We believe our paper addresses a considerable gap in the literature, and possesses an attractive breadth and simplicity in its assumptions.

Finally, examination of our examples leads us to offer two conjectures.

{\bf Conjecture 1.} (Optimal regularity threshold at the degenerate boundary.)
{\it Let $\alpha\in(0,1)$ and assume $b_2>1-\alpha$ in some neighborhood $\Omega$ of a point $p$ of a degenerate boundary component (see Definition \ref{DefOpens}).
Assume the coefficients $b_1,b_2,c$ are measurable.
Assume $f$ solves $L(f)=0$ and $f$ is locally finite in $\Omega$.

If $f\in{}C^{0,\alpha}(\partial_0\Omega\cup\Omega^{Int})$, then there is some neighborhood $\Omega'$ of $p$ with $\Omega'\subset\Omega$, so that $f\in{}C^{1,\beta}(\partial_0\Omega'\cup\Omega'{}^{Int})$ for all $\beta\in[0,1)$.
Similarly, if $b_1,b_2,c\in{}C^{k,\beta}(\partial_0\Omega\cup\Omega^{Int})$, then $f\in{}C^{k+2,\beta}(\partial_0\Omega'\cup\Omega'{}^{Int})$) .

In the case that $b_2\ge1$, if $f$ is locally bounded then $f\in{}C^{1,\beta}(\partial_0\Omega'\cup\Omega'{}^{Int})$ for all $\beta\in(0,1)$ (or $f\in{}C^{k+2,\beta}(\partial_0\Omega'\cup\Omega'{}^{Int})$ if $b_1,b_2,c\in{}C^{k,\beta}(\partial_0\Omega\cup\Omega^{Int})$).}

{\bf Conjecture 2.} (Liouville theorem for the steady-state Heston equation.)
{\it Consider the Heston operator $L_H$, given by
\begin{eqnarray}
	L_H
	\;=\;y\triangle
	+\left(b_1+B_1{}y\right)\frac{\partial}{\partial{x}}
	+\left(b_2+B_2{}y\right)\frac{\partial}{\partial{y}}
	-rU,
\end{eqnarray}
with measurable, bounded coefficients $b_1$, $B_1$, $b_2$, $B_2$, and $r$.
Let $\epsilon$ be nonzero.
A non-negative, locally finite solution $L_H(f)=0$ on the closed half-plane $\overline{H}{}^2$ with $b_2>0$, $B_2<-\epsilon^2$, and $r\le0$ is necessarily constant.
The solution is zero if $r<0$.} \\

See \S\ref{SubSecHestonDescription} and Example \ref{ExHestonGradFail} for more discussion about the Heston operator.

\section{Equations with Euler-type degeneracy} \label{SecVarEqns}

We give a sampling of operators with Euler-type degeneracy and their applications.
The prototype is the homogeneous Euler ordinary differential equation
\begin{eqnarray}
	y^2f_{yy}+Byf_y+Cf\;=\;0. \label{EqnEulerModel}
\end{eqnarray}
If we demand solutions remain non-negative, it is necessary that $C\le\frac14(1-B)^2$.
Most solutions have the form $y^{1-B}$ so when $B<1$ solutions are bounded at $0$ and unbounded at infinity, and when $B>1$ solutions are unbounded at $0$ and are zero at infinity.
As expected, solutions show major qualitative changes at $B=1$.
Equation (\ref{EqnEulerModel}) is our model ODE, and the behavior of the model solutions $y^{1-B}$ helps us build barriers for solutions $f\ge0$ of our PDE $L(f)=0$ when $b_2\ge1$, but not when $b_2<1$.

\subsection{Transport-Diffusion in a Hyperbolic metric}

An extremely natural appearance of the operator (\ref{EqnOpMain}) is in the diffusion-transport problem in the hyperbolic metric on the half-plane.
Using the familiar $g_{ij}=y^{-2}\delta_{ij}$ and letting $\vec{B}$ be the vector field $\vec{B}=y\vec{b}=y\left(b_1\partial_x+b_2\partial_y\right),$ then the norms of the fields $\vec{B}$, $\vec{b}$ in their respective metrics are identical: $|\vec{B}|^2_g=|\vec{b}|^2_{Eucl.}=(b_1)^2+(b_2)^2$.
Then (\ref{EqnOpMain}) is a diffusion-transport operator with a bounded transport field:
\begin{eqnarray}
	\begin{aligned}
	L(f)
	&\;=\;y^2\left(f_{xx}+f_{yy}\right)+y(b_1f_x+b_2f_y)+cf \\
	&\;=\;\triangle_gf
	\,+\,\big<\vec{B},\,\nabla{}f\big>_g
	\,+\,cf.
	\end{aligned}
\end{eqnarray}
When the ``catalysis'' coefficient $c$ is zero, our Liouville theorem states that, provided $b_2\ge1$, the only steady-state solutions are the constant solutions.
We remark that a qualitative change in behavior still occurs when $b_2<1$.
One wonders what the invariant meaning behind this change in behavior might be.

Daskalopoulos-Hamilton \cite{DH} present a similar interpretation of the operator $L$, except instead of working in the hyperbolic metric, they interpreted the slightly different operator $L=\frac12y\triangle{}y+\nu\partial_y$ as a diffusion-transport operator on the metric $g_{ij}=(2y)^{-1}\delta_{ij}$, which they term the {\it cycloidal} metric.
Daskalopoulos-Hamilton employed this metric to great success, but we remark that the cycloidal metric is incomplete and has unbounded Gaussian curvature, and the transport field $\vec{b}=\nu\partial_y$ has unbounded norm.

\subsection{Population Dynamics, and Keldysh and Tricomi operators} \label{SubSecPopDyn}

Keldysh operators in two variables take the form $L(u)=u_{xx}+K(y)u_{yy}$ and Tricomi operators take the form $L(u)=K(y)u_{xx}+u_{yy}$, modulo lower order terms, where it is required that $K=0$ along a ``parabolic curve'' that separates the elliptic from the hyperbolic regime.
The associated boundary value problem goes back to \cite{Keld}; see \cite{Otw} for a thorough treatment.
On the ``elliptic side,'' where $K\ge0$, Keldysh and Tricomi operators can be transformed into operators with Euler-type degeneracy.

One place this type of operator appears is in population dynamics.
Epstein-Mazzeo studied diffusion processes in population dynamics in the extended work \cite{EM}, with Keldysh-type operators of the form
\begin{eqnarray}
	\begin{aligned}
		L
		\;=\;
		\sum_{i=1}^m\left(\frac{\partial}{\partial{}x_i}\right)^2
		+\sum_{i=1}^n\left(y_i\left(\frac{\partial}{\partial{}y_i}\right)^2+b_i\frac{\partial}{\partial{}y_i}\right)
	\end{aligned}
\end{eqnarray}
on $\mathbb{R}^m\times\mathbb{R}^n_+$.
The case $m=1$, $n=1$ gives
\begin{eqnarray}
	L\;=\;
	\left(\frac{\partial}{\partial{}x_1}\right)^2
	+y_1\left(\frac{\partial}{\partial{}y_1}\right)^2
	+b_1\frac{\partial}{\partial{}y_1}
\end{eqnarray}
which appears to be different from the kind of operator studied in this paper; but after the change of variables $x=x_1$, $y=2\sqrt{y_1}$ we obtain
\begin{eqnarray}
	L\;=\;
	\frac{\partial^2}{\partial{y}^2}+\frac{\partial^2}{\partial{y}^2}
	+\left(2b_1-1\right)\frac1y\frac{\partial}{\partial{y}}
\end{eqnarray}
which is precisely the kind of operator we study, after multiplying through by $y^2$.

Any Keldysh- or Tricomi-type degenerate-elliptic operator of the form
\begin{eqnarray}
	L_k(f)\;=\;f_{tt}\,+\,u^kf_{uu}
\end{eqnarray}
has Euler-type degeneracy after substituting $x=t$, $y=\frac{2}{2-k}u^{\frac{2-k}{2}}$ (making a logarithmic change when $k=2$ actually does {\it not} give Euler-type degeneracy).
We find $b_2=\frac{k}{k-2}$ and one notices the exceptional values $b_2=0$ and $b_2=1$ correspond to $k=0$ and $k=\pm\infty$, and the extraordinary range $b_2\in(0,1)$ corresponds to $k$ negative.
Therefore the $b_2\in(-\infty,0]\cup[1,\infty)$ versus the $b_2\in(0,1)$ cases precisely distinguish, respectively, the Keldysh-type from the Tricomi-type operators.

These coordinate changes easily allow us to prove Theorems \ref{ThmLiouvilleKeldysh} and \ref{ThmLiouvilleKeldysh}.

\begin{proof}[Proof of Theorems \ref{ThmAMKeldysh} and \ref{ThmLiouvilleKeldysh}]
	We consider the operator $L=\left(\frac{\partial}{\partial{t}}\right)^2+u^k\left(\frac{\partial}{\partial{u}}\right)^2$ for $k>2$.
	With $x=t$, $y=\frac{2}{2-k}u^{\frac{2-k}{2}}$ elementary computations give
	$$
	L=\left(\frac{\partial}{\partial{x}}\right)^2+\left(\frac{\partial}{\partial{y}}\right)^2\,+\,\frac{k}{k-2}\frac{1}{y}\frac{\partial}{\partial{y}}
	$$
	and we notice that $b_2=\frac{k}{k-2}>1$.
	The coordinate transformation takes the half-plane to the half-plane, but $u=\infty$ is exchanged with $y=0$ and vice-versa.
	Therefore almost-monotonicity, Theorem \ref{PropAlmostMonoIntro}, precisely states that when $u=0$---which is $y=\infty$---the function $f$ is constant and equals its infimum.
	
	Similarly, the hypotheses for the Liouville theorem, Theorem \ref{ThmLiouvilleActual}, are satisfied after the coordinate transformation.
\end{proof}

\subsection{The Heston model and other financial models} \label{SubSecHestonDescription}

Heston \cite{Hest} extended the Black-Scholes model to the situation where the underlying asset's price volatility is itself a stochastic variable.
Heston's stochastic system is
\begin{eqnarray*}
	\begin{aligned}
		&dS=\mu{}Sdt+\sqrt{v}dW_t^1, \;\;
		dv=\kappa\left(\theta-v\right)dt+\sigma\sqrt{v}dW_t^2, \;\;
		\left<dW_t^1,dW_t^2\right>=\rho\,dt
	\end{aligned}
\end{eqnarray*}
for asset price $S$ and its stochastic volatility $v$ as functions of time, where Weiner processes $dW_t^1$, $dW_t^2$ are correlated by $\rho$, and $\mu,\kappa,\theta,\sigma$ are constants known as the asset drift rate, the volatility mean-reversion rate, average volatility, and the volatility of volatility.
Standard techniques produce a backward heat equation for a European-style option price $U$, given by $U_t\;=\;-L_H(U)$ \cite{Hest} where
\begin{eqnarray}
	\begin{aligned}
		&L_H(U)
		\;=\;
		\frac12vS^2\frac{\partial^2U}{\partial{S}^2}
		+\rho\sigma{}vS\frac{\partial^2U}{\partial{S}\partial{v}}+\frac12\sigma^2v\frac{\partial^2U}{\partial{}v^2} \\
		&\quad\quad\quad
		+rS\frac{\partial{U}}{\partial{S}}
		+\left(\kappa(\theta-v)-\lambda\right)\frac{\partial{U}}{\partial{v}}
		-rU,
		\;\;\text{and} \;\; \kappa,\theta>0,
		\;\rho\in[0,1).
	\end{aligned} \label{EqnHestonOpDefFirst}
\end{eqnarray}
In this model the interest rate $r$ is assumed constant---in the past it was even typical to assume $r\ge0$.
The price of volatility, $\lambda=\lambda(v,S,t)$, is often $0$, at least in simple models.
The operator $L_H$ is the {\it Heston operator}.
On the change of variables
\begin{eqnarray}
	\begin{aligned}
		&x\;=\;\frac{\sqrt{2}}{\sqrt{1-\rho^2}}\log\,S
		\;-\;\frac{\sqrt{2}\rho/\sigma}{\sqrt{1-\rho^2}}v,
		\quad\quad
		y\;=\;\frac{\sqrt{2}}{\sigma}v
	\end{aligned}
\end{eqnarray}
we find
\begin{eqnarray}
	\begin{aligned}
		&L_H(U)
		\;=\;y\left(U_{xx}+U_{yy}\right)
		+\left(b_1+B_1{}y\right)U_x
		+\left(b_2+B_2{}y\right)U_y-rU,
	\end{aligned}\label{EqnHestonModified}
\end{eqnarray}
where $b_1=\frac{\sqrt{2}\sigma}{\sqrt{1-\rho^2}}\left(r\sigma+(\lambda-\kappa\theta)\rho\right)$, $B_1=\frac{\sqrt{2}\sigma}{\sqrt{1-\rho^2}}\left(\rho\kappa-\frac{1}{2}\right),$ $b_2=\frac{\sqrt{2}}{\sigma}\left(\kappa\theta-\lambda\right)$, and $B_2=-\frac{\sqrt{2}\kappa}{\sigma}.$
Multiplying through by $y$ we do indeed see Euler-type degeneracy at the boundary $\{y=0\}$.
We typically have $b_2>0$ but no guarantee that $b_2\ge1$.

Due to the unbounded coefficients about half of this paper does not apply to the Heston equation.
We mention it because helps illustrate the necessity of the assumptions in our theorems.
See Example \ref{ExHestonGradFail} to see some solutions that violate the interior gradient estimate Proposition \ref{PropIntGradFirst} and the Liouville Theorem \ref{ThmLiouvilleFirstStatement}.

A large number of financial models display boundary-degeneracy; examples are the SABR model \cite{HKLW}, the Cox-Ingersoll-Ross process \cite{CIR}, and the Fernholz-Karatzas equation \cite{FK}.
Most of these models can indeed be transformed into equations with Euler-type degeneracy.
For example the SABR model uses the stochastic process
\begin{eqnarray}
	\begin{aligned}
		&dF=aF^\beta{}dW_t^1, \;
		da=\nu{}a\,dW_t^1, \;
		dW_t^1dW_t^2=\rho{}dt
	\end{aligned}
\end{eqnarray}
where $\nu{}>0$ and $\rho,\beta\in[0,1]$.
The options pricing equation is $P_t=-\frac12L(P)$ where
\begin{eqnarray}
	L(P)
	\;=\;a^2\left[F^{2\beta}P_{FF}+2\rho\nu{}F^\beta{}P_{aF}+\nu^2P_{aa} \right]
	\label{OpRoughSABR}
\end{eqnarray}
(Equation (A.10a) of \cite{HKLW}) on the quarter-plane $F,a\in[0,\infty)$.
Changing variables to $w=\nu^{-1}\alpha$, $z=\frac{1}{1-\beta}F^{1-\beta}$ gives
\begin{eqnarray}
	L
	\;=\;
	\nu^2w^2
	\left[
	\left(
	\left(\frac{\partial}{\partial{z}}\right)^2
	+2\rho\frac{\partial}{\partial{z}}\frac{\partial}{\partial{w}}
	+\left(\frac{\partial}{\partial{w}}\right)^2\right)
	\,-\,\left(\frac{\beta}{1-\beta}\right)\frac{1}{z}\frac{\partial}{\partial{}z}
	\right]
	\label{OpEasierSABR}
\end{eqnarray}
on the quarter-plane $w,z\in[0,\infty)$.
This type of operator, where simultaneous scaling in the two variables leaves the operator unchanged, can {\it always} be transformed, up to a homogeneous factor, into an equation of the form (\ref{EqnOpMain}), after making an affine and then a conformal transformation.
For (\ref{OpEasierSABR}) one sets $z'=z$, $w'=\frac{1}{\sqrt{1-\rho}}w-\frac{\rho}{\sqrt{1-\rho}}z$ and then $x+\sqrt{-1}y=(w'+\sqrt{-1}z')^{\pi/\theta}$, where $\theta=-\cot^{-1}\rho(1-\rho^2)^{-1/2}$.
Usually this is too tough for by-hand computation, but in the simplest case, $\rho=0$, we have $\theta=\pi/2$ and $x=w^2-z^2$, $y=2zw$.
The SABR operator is therefore
\begin{eqnarray*}
	\begin{aligned}
		L=
		G(x,y)
		\left[
		y^2\left(\partial_x^2+\partial_y^2\right)
		-y\cdot\frac{\beta/2}{1-\beta}
		\left(\frac{y}{\sqrt{x^2+y^2}}\partial_{x}
		+\frac{x+\sqrt{x^2+y^2}}{\sqrt{x^2+y^2}}\partial_{y}
		 \right)
		\right]
	\end{aligned}
\end{eqnarray*}
on the half-plane $\{y\ge0\}$, which is precisely the kind of equation we study here, up to the homogeneous multiple $G(x,y)=G(x/y)$ whose exact form is unimportant when examining $L(f)=0$.
The coefficients are indeed bounded, in the sense we require.
The sign of $b_2$ however is negative for $\beta\in(0,1)$.

%

\subsection{The Abreu equation} \label{SubSecAbreu}

If an $n$-torus acts on a K\"ahler manifold $(M^{2n},J,\omega)$ isometrically and symplectomorphically, the K\"ahler condition allows us to combine the Arnold-Liouville dimensional reduction from symplectic geometry with Riemannian geometry to produce the attractive theory of toric K\"ahler geometry. See, for example, \cite{Guil}, \cite{Ab1}, \cite{Don1} and references therein.

The Arnold-Liouville construction creates specially adapted coordinates
\begin{eqnarray}
	\begin{aligned}
		\varphi^1,\,\theta_1,\,\dots,\,\varphi^n,\,\theta_n
	\end{aligned}
\end{eqnarray}
on the K\"ahler manifold, known as {\it action-angle} coordinates, or simply {\it symplectic coordinates}, where the ``angle'' fields $\frac{\partial}{\partial\theta_i}$ generate the torus action, and the ``action'' variables $\varphi^i$, which satisfy $\nabla\varphi^i=-J\frac{\partial}{\partial\theta_i}$, parameterize the leaf-space.
The map $\Phi:M^{2n}\rightarrow\mathbb{R}^n$ sending $p$ to $(\varphi^1(p),\dots,\varphi^n(p))$ is called the Arnold-Liouville reduction, or in a slight abuse of terminology, the {\it moment map}.
If $M^{2n}$ is compact then its image under $\Phi$ is a compact polytope $\Sigma^n\subset\mathbb{R}^n$ called its {\it Delzant polytope}.

The Arnold-Liouville reduction $M^{2n}\rightarrow\Sigma^n$ is the expression, in coordinates, of the Riemannian quotient of $M^{2n}$ by the isometric action of the torus.
Thus the reduced manifold-with-boundary $\Sigma^n$ must contain, in some fashion, all of the metric, symplectic, and complex-analytic data present in the original K\"ahler manifold.
Indeed there is a convex function $U:\Sigma^n\rightarrow\mathbb{R}$, called the manifold's {\it symplectic potential}, with $g_{ij}=U_{ij}d\varphi^i\otimes{}d\varphi^j$ on $\Sigma^n$ and $g_{ij}=U_{ij}d\varphi^i\otimes{}d\varphi^j+U^{ij}d\theta_i\otimes{}d\theta_j$ on $M^{2n}$.
Here $U_{ij}$ is the coordinate Hessian $(U_{ij})=\left(\frac{\partial^2U}{\partial\varphi^i\partial\varphi^j}\right)$ with respect to the action coordinates, and $(U^{ij})$ is the inverse matrix of $(U_{ij})$.
If $R$ is the scalar curvature of $M^{2n}$ then its expression on the reduced manifold $\Sigma^n$ is given by the {\it Abreu Equation}, the fully non-linear 4th order elliptic equation
\begin{eqnarray}
	\frac{\partial^2\,U^{ij}}{\partial\varphi^i\partial\varphi^j}\;=\;-2R \label{EqnAbreu}
\end{eqnarray}
from Theorem 4.1 of \cite{Ab1}.
The Abreu equation bears the relationship to the biharmonic equation $\triangle\triangle{}U=2R$ that the Monge-Ampere equation $det(U_{ij})=2R$ bears to the Poisson equation $\triangle{}U=2R$.

The theory of 4th order elliptic equations, in comparison to the 2nd order theory, is a bit threadbare.
For example there is no maximum principle.
But we have the Trudinger-Wang reduction \cite{TW}, and its application to the Abreu equation by Donaldson \cite{Don1}.
Starting from the homogeneous nonlinear equation $U^{ij}{}_{,ij}=0$, Trudinger-Wang introduced an auxiliary linear second order equation, solutions of which provide information on the original 4th order equation.
Donaldson sharpened this construction with a hodographic transformation, and fully reduced $U^{ij}{}_{,ij}=0$ to a pair of second order linear equations.

To explain, we work the 2-dimensional setting.
Under a natural convexity requirement $g=U_{ij}d\varphi^i\otimes{}d\varphi^j$ is a Riemannian metric on a (potentially hypothetical) 2-dimensional manifold.
Then one notices that, assuming $U$ solves the homogeneous Abreu equation, the function $y=\sqrt{det(U_{ij})}$ is {\it harmonic} in this metric---this is the original Trudinger-Wang observation---and therefore has an {\it harmonic conjugate} $x$ obtained by solving $dx=*dy$ where $*:\bigwedge{}^1\rightarrow\bigwedge{}^1$ is the Hodge-$*$ operator of $g$.
Going further, Donaldson noticed that with $(x,y)$ being isothermal coordinates on our (possibly hypothetical) Riemannian 2-manifold, one can compute the coordinate transitions from the $(x,y)$ back to the $(\varphi^1,\varphi^2)$ coordinates, and find
\begin{eqnarray}
	\begin{aligned}
	&y\left(\varphi^1{}_{xx}+\varphi^1{}_{yy}\right)
	-\varphi^1{}_y=0
	\quad \text{and} \\
	&y\left(\varphi^2{}_{xx}+\varphi^2{}_{yy}\right)
	-\varphi^2{}_y=0. \label{EqnsAbreuReduction}
	\end{aligned}
\end{eqnarray}
Notice the roles of the dependent and independent variables have completely switched.
Working backwards, if one can solve the decoupled linear system (\ref{EqnsAbreuReduction}), one can solve the Abreu equation.

The two equations (\ref{EqnsAbreuReduction}) have the Euler-type degeneracy that we study in this paper---after multiplying everything by $y$, that is---but the value of the transport term is wrong: it is not just less than $1$, it is negative.
This is remedied by replacing the functions $\varphi^i$ by $\tilde\varphi^i=y^{-2}\varphi^i$, whereupon we obtain the two equations
\begin{eqnarray}
	y\left(\tilde\varphi^i{}_{xx}+\tilde\varphi^i{}_{yy}\right)
	+3\tilde\varphi^i{}_y=0.
\end{eqnarray}
The theory developed in this paper, particularly the Liouville theorem \ref{ThmLiouvilleFirstStatement}, has strong consequences for the geometry of toric scalar-flat K\"ahler 4-manifolds.

\section{The interior gradient estimates}

Our broadest result is Proposition \ref{PropInteriorInitial}, which states that a complete solution of $L(f)=g$, $f\ge0$ always satisfies $y|\nabla\log{}f|\le{}D$.
This result requires boundedness but no sign constraints on the coefficients, and notably does not require local boundedness of $f$ at $\{y=0\}$.
The method of proof is by a point-picking improvement argument and then a scale/blowup argument.
This style of argument sees frequent use in differential geometry---it was largely popularized in its present form by Perelman---and is made possible here by the fact that the operators $y^2\triangle$ and $y\nabla$ are invariant under simultaneous scaling of $x$ and $y$.
The reliance on coordinate scaling makes the $L^\infty$ bounds on the coefficients, as opposed to, say, $L^p_{loc}$ bounds, completely indispensable.

\begin{proposition}[Interior gradient estimate] \label{PropInteriorInitial}
	Assume that on the open half-plane $H^2$ the functions $b_1,b_2,c,g$ are measurable, bounded $|b_1|,|b_2|,|c|,|g|<\Lambda$, and that $f>0$ satisfies $L(f)=g$ weakly.
	Then a constant $D=D(\Lambda)$ exists so
	\begin{eqnarray}
		y|\nabla\log{}f|\;\le\;D.
	\end{eqnarray}
\end{proposition}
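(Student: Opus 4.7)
The plan is a contradiction-based blowup argument exploiting two facts: (i) the quantity $y|\nabla\log f|$ is pointwise invariant under the simultaneous coordinate rescaling $(x,y)\mapsto((x-x_0)/\lambda,\,y/\lambda)$, and (ii) the operator $L$ is equivariant under the same rescaling, in the sense that $\tilde f(\tilde x,\tilde y):=f(\lambda\tilde x+x_0,\lambda\tilde y)$ solves an Euler-type equation with the identical $L^\infty$ bound $\Lambda$ on its rescaled coefficients and right-hand side. As a preliminary reduction, replacing $f$ by $f+\epsilon$ yields $L(f+\epsilon)=g+c\epsilon$ with $|g+c\epsilon|\le\Lambda(1+\epsilon)$, while $y|\nabla\log(f+\epsilon)|\le y|\nabla\log f|$; proving the estimate for $f+\epsilon$ with constant $D(\Lambda(1+\epsilon))$ and then letting $\epsilon\downarrow 0$ recovers it for $f$. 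Hence we may assume $\inf f\ge\epsilon>0$.

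Suppose the proposition fails, and choose $p_n=(x_n,y_n)\in H^2$ with $w_n:=y_n|\nabla\log f|(p_n)\to\infty$. Set
\begin{equation*}
\tilde f_n(\tilde x,\tilde y)\;:=\;\frac{f(y_n\tilde x+x_n,\,y_n\tilde y)}{f(p_n)},
\end{equation*}
so $\tilde f_n(0,1)=1$ and $\tilde L_n(\tilde f_n)=\tilde g_n/f(p_n)$, the rescaled coefficients $\tilde b_i,\tilde c,\tilde g_n$ still $L^\infty$-bounded by $\Lambda$. Because $f(p_n)\ge\epsilon$, the rescaled right-hand side is uniformly bounded by $\Lambda/\epsilon$, and the scale invariance of the gradient quantity forces $|\tilde\nabla\tilde f_n|(0,1)=w_n\to\infty$. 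If desired one may first apply a Perelman-style point-picking to arrange $\tilde y|\tilde\nabla\log\tilde f_n|\le 2w_n$ on a rescaled ball of definite radius around $(0,1)$, although for the compactness step below only the value at $(0,1)$ is actually needed.

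On the fixed Euclidean ball of radius $1/4$ around $(0,1)$, $\tilde y$ is bounded away from $0$, so $\tilde L_n/\tilde y^2=\tilde\triangle+(\tilde b_1/\tilde y)\partial_{\tilde x}+(\tilde b_2/\tilde y)\partial_{\tilde y}+\tilde c/\tilde y^2$ is uniformly elliptic with \emph{constant-coefficient} leading part $\tilde\triangle$ and uniformly bounded lower-order coefficients. Interior Harnack gives a uniform two-sided bound $C^{-1}\le\tilde f_n\le C$, after which Calder\'on--Zygmund $W^{2,p}$ estimates, applied to $\tilde\triangle\tilde f_n$ whose right-hand side is now $L^\infty$-bounded, yield uniform $W^{2,p}$ bounds on a smaller ball for every $p<\infty$. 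Two-dimensional Sobolev embedding upgrades this to uniform $C^{1,\alpha}$ bounds; along a subsequence $\tilde f_n\to\tilde f_\infty$ in $C^1$, and then $w_n=|\tilde\nabla\tilde f_n|(0,1)\to|\tilde\nabla\tilde f_\infty|(0,1)<\infty$ contradicts $w_n\to\infty$.

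The main obstacle is precisely this uniform $C^{1,\alpha}$ compactness of the rescaled family, since the whole strategy collapses the moment one cannot pass pointwise gradient information to the limit. It is essential here that the top-order part $y^2\triangle$ has \emph{smooth} leading coefficient, so that Calder\'on--Zygmund applies to the operator $\tilde L_n/\tilde y^2$; a merely measurable leading symbol would deliver only $C^{0,\alpha}$ bounds via Krylov--Safonov, which is not enough to conclude. The $L^\infty$ control on the lower-order coefficients is in this sense the largest class compatible with the rescaling, and the whole argument is a genuine fusion of the scale invariance of the Euler-type operator with the regularity afforded by the smoothness of its top-order part.
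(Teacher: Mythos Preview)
Your overall strategy---rescale, obtain uniform local $C^{1,\alpha}$ bounds, and read off a contradiction---is sound in spirit, but the argument as written has a real gap in the handling of the inhomogeneous term, and it is exactly the gap that the paper's point-picking is designed to close.

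The problem is your preliminary reduction. You use $\inf f\ge\epsilon$ only to bound the rescaled right-hand side $\tilde g_n/f(p_n)$ by $\Lambda/\epsilon$; the Harnack and Calder\'on--Zygmund constants on $B_{1/4}(0,1)$ then depend on this bound, so what your contradiction actually produces is a constant $D(\Lambda,\epsilon)$, not $D(\Lambda(1+\epsilon))$. Sending $\epsilon\downarrow0$ therefore gives nothing. Put differently: if along your contradicting sequence one has $f(p_n)\to0$, the normalized right-hand side $\tilde g_n/f(p_n)$ may blow up, the inhomogeneous Harnack bound $\sup\tilde f_n\le C(\inf\tilde f_n+\|\tilde g_n/f(p_n)\|)$ degenerates, and you lose the uniform $C^{1}$ control at $(0,1)$ that the whole argument rests on. The inequality you record, $y|\nabla\log(f+\epsilon)|\le y|\nabla\log f|$, points the wrong way for this purpose: bounding the smaller quantity does not bound the larger one.

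This is precisely why the paper does \emph{not} rescale by $y_n$ but by the gradient size $\alpha_i=|\nabla\log f_i|(p_i)$, and why the point-picking is not optional. After that rescaling the leading coefficient is $(\bar y+\alpha_iy_i)^2\ge(3i/4)^2$ on the ball of radius $i/4$, so the contribution of $g_i$ to $\triangle f_i$ carries an extra factor $O(i^{-2})$ and becomes negligible; simultaneously, the point-picking guarantees $|\nabla\log f_i|\le4$ on that growing ball, which yields the two-sided bounds $e^{-4r}\le f_i/f_i(0)\le e^{4r}$ used to control the first-order and zeroth-order terms. In the limit one obtains an entire nonnegative harmonic function with $|\nabla\log f_\infty|(0,0)=1$, and the classical Liouville theorem finishes. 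Your rescaling by $y_n$ keeps all lower-order coefficients of size $O(1)$ and keeps the right-hand side at size $|g|/f(p_n)$; without a mechanism to force this quotient to stay bounded (or to vanish), the compactness step cannot be made uniform in the sequence.
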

{\bf Remark.}
The method of this proof easily extends to provide uniform bounds of the form $y^k|\nabla^k\log{}f|<D=D(\Lambda,k)$, provided the coefficients also have improved regularity.
But optimal regularity is not a large concern of this paper, and since we shall not need this for any other results we leave this for the future. \\
\indent{\bf Remark.} In ``satisfies $L(f)=g$ weakly'' one may replace ``weakly'' with ``in the distributional sense'' or ``in the viscosity sense.''
The only place where any notion of ``$=$'' in ``$L(f)=g$'' occurs is when we obtain $C^{1,\alpha}$ convergence of solutions $f_i$ of the classical Poisson equation of the form $\triangle{}f_i=h_i$ where the $h_i$ are gradients of measurable functions; this is in the argument just before (\ref{EqnTraingleZero}). \\
\indent{\bf Remark.}
This proposition is definitely false if $|L(f)|\le\Lambda$ is replaced with a one-sided bound, say $L(f)\le0$.
See Example \ref{ExSupers}.

\begin{proof}
	For an argument by contradiction, assume there is no such $D$.
	Then there exists a sequence of operators $L_i$ and functions $b_{1,i},b_{2,i},c_i,g_i$ satisfying the hypotheses, but for which a sequence of functions $f_i>0$ with $L_i(f_i)=g_i$ on $H^2$ exists, along with a sequence of points $p_i=(x_i,y_i)\in{}H^2$, where $y_i|\nabla\log{}f|_{p_i}\rightarrow\infty$.
	Passing to a subsequence if necessary we may assume
	\begin{eqnarray}
		y_i|\nabla\log{}f|_{p_i}>i. \label{EqnGrowthOfGrad}
	\end{eqnarray}
		
	The first step is to execute a ``point-picking'' scheme to improve the choice of the points $p_i$, to wit, nearby $p_i$ we select a better point, where ``better'' means a point with substantially larger value of $|\nabla\log{}f_i|$, should such a point exist.
	Specifically, denoting $p_i$ by $p_{i,1}$, let $p_{i,2}$ be any point in the disk of radius $i/(4|\nabla\log{}f_i|_{p_{i,1}})$ around $p_{i,1}$ that has quadruple the gradient, $|\nabla\log{}f_i|_{p_{i,2}}>4|\nabla\log{}f_i|_{p_{i,1}}$, if any such point exists.
	We remark that from $y_i|\nabla\log{}f|_{p_{i,1}}>i$, the $y$-value of $p_{i,2}$ satisfies
	\begin{eqnarray}
		y(p_{i,2})
		\ge{}y_i-\frac{i}{4|\nabla\log{}f_i|_{p_{i,1}}}
		>y_i\left(1-\frac14\right) \label{EqnNewPointYEst}
	\end{eqnarray}
	implying that the disk of radius $i/(4|\nabla\log{}f_i|_{p_{i,1}})$ around $p_{i,1}$, which is the search space for the better point $p_{i,2}$, remains well within the upper half-plane.
	Due to (\ref{EqnNewPointYEst}) we also have
	\begin{eqnarray}
		y(p_{i,2})|\nabla\log{}f_i|_{p_{i,2}}\;>\;y_i\left(1-\frac14\right)4|\nabla\log{}f_i|_{p_{i,1}}\;\ge\;3i
	\end{eqnarray}
	and so we retain (and even improve) the hypothesis (\ref{EqnGrowthOfGrad}).
	
	But $p_{i,2}$ may still be inadequate.
	Possibly there is a point $p_{i,3}$ in the ball of radius $i/(4|\nabla\log{}f_i|_{p_{i,2}})$ with still larger gradient: $|\nabla\log{}f_i|_{p_{i,3}}>4|\nabla\log{}f_i|_{p_{i,2}}$.
	If such a point $p_{i,3}$ exists, we have
	\begin{eqnarray}
		\begin{aligned}
		y(p_{i,3})
		&\ge{}y_i-\frac{i}{4|\nabla\log{}f_i|_{p_{i,1}}}-\frac{i}{16|\nabla\log{}f_i|_{p_{i,1}}}
		\ge{}y_i\left(1-\frac14-\frac{1}{16}\right)
		\end{aligned}
	\end{eqnarray}
	and consequently also
	\begin{eqnarray}
		\begin{aligned}
		y(p_{i,3})|\nabla\log{}f_i|_{p_{i,3}}
		\;\ge\;y_i\left(1-\frac14-\frac{1}{16}\right)\cdot4^2|\nabla\log{}f_i|_{p_{i,1}}
		\;\ge\;11i.
		\end{aligned}
	\end{eqnarray}
	Continuing this process, we obtain, at the $k^{th}$ iteration, a point
	$p_{i,k}$ with:
	\begin{eqnarray}
		&&|\nabla\log{}f_i|_{p_{i,k}}>4^{k-1}|\nabla\log{}f_i|_{p_{i,1}}, \label{IneqGradDoubling} \\
		&&y(p_{i,k})
		\;\ge\;\left(1-\frac14-\dots-\frac{1}{4^{k-1}}\right)y_i
		\;=\;\frac13\left(2+4^{-k+1}\right)y_i, \label{IneqYRemains}
	\end{eqnarray}
	and consequently also the following improvement on (\ref{EqnGrowthOfGrad})
	\begin{eqnarray}
		&&y(q_{i,k})|\nabla\log{}f_i|_{p_{i,k}}
		\;>\;\frac13\left(2\cdot{}4^{k-1}+1\right)i.
	\end{eqnarray}
	Because of (\ref{IneqYRemains}), we see that the sequence of re-chosen points $p_{i,1},p_{i,2},\dots,p_{i,k}$ remains within the interior of the upper half-plane, and more than that, remains within the closed ball of radius $\frac23y_i$ around the original point $p_i$.
	In particular the choices all remain within the fixed compact set $\overline{B_{p_i}(\frac23y_i)}\subset{}H^2$.

	To prove that this process {\it must} terminate after only finitely many steps, note elliptic regularity excludes the possibility that $|\nabla{}f_i|$ is infinite on any compact set in the open half-plane---even though the coefficients on $L_i(f_i)=g_i$ might be large in the disk $\overline{B_{p_i}(\frac23y_i)}$, no coefficient is ever infinite in this disk.
	Therefore (\ref{IneqGradDoubling}) guarantees that our point-reselection process must terminate at some finite stage.
	Letting $p_{i,k}$ be the terminal point of this process, we replace the old point $p_i$ with the now re-selected point $p_{i,k}$.
	
	Upon reselection of an improved point $p_i$, we still have the same functions $f_i$, operators $L_i$, and functions $g_i$.
	We now have the following conditions:
	\begin{itemize}
		\item[{\it{a}})] $f_i>0$ and weakly satisfies $L_i(f_i)=g_i$ on the open half-plane $H^2$, and $|b_1|,|b_2|,|c|,|g_i|\le\Lambda$.
		\item[{\it{b}})] We have points in the open half-plane $p_i\in{}H^2$ with $y(p_i)|\nabla\log{}f_i|_{p_i}>i$.
		\item[{\it{c}})] In the ball of radius $i/(4|\nabla\log{}f_i|_{p_i})$ around $p_i$, $|\nabla\log{}f_i|\le4|\nabla\log{}f_i|_{p_i}$.
	\end{itemize}
	Item ({\it{c}}) was ensured by the point-picking process; ({\it{a}}) and ({\it{b}}) already held.
	
	With the reselection process done, the second step is to scale the functions $f_i$ and to scale the coordinate system.
	To scale $f_i$, simply multiply it by a constant so $f_i(p_i)=1$; this clearly does not affect conditions ({\it{a}})-({\it{c}}).
	To scale the coordinates system, set $\alpha_i=|\nabla\log{}f_i|_{p_i}$ and for each $i$ create the linear diffeomorphism
	\begin{eqnarray}
		\begin{aligned}
		&\bar{x}\;=\;\alpha_i\left(x-x(p_i)\right) \\
		&\bar{y}\;=\;\alpha_i\left(y-y(p_i)\right)
		\end{aligned}
	\end{eqnarray}
	from the half-plane $\{y>0\}$ in the $(x,y)$ system to the half-plane $\{\bar{y}>-\alpha_iy_i\}$ in the new $(\bar{x},\bar{y})$ system.
	
	The coordinates of $p_i$ in the new system are $(0,0)$, and at this point the new choice of coordinates gives $|\nabla\log{f}_i|_{(0,0)}=1$.
	As measured in the new coordinate system, conditions ({\it{a}}), ({\it{b}}) and ({\it{c}}) now read
	\begin{itemize}
		\item[{\it{a}})${}^\prime$] $f_i>0$ satisfies
		\begin{eqnarray*}
			\begin{aligned}
			&\left(\bar{y}+\alpha_iy(p_i)\right)^2\left(\frac{\partial^2f_i}{\partial\bar{x}^2}+\frac{\partial^2f_i}{\partial\bar{y}^2}\right) \\
			&\quad+\left(\bar{y}+\alpha_iy(p_i)\right)\left(b_1\frac{\partial{}f_i}{\partial\bar{x}}+b_2\frac{\partial{}f_i}{\partial\bar{y}}\right)
			+cf_i
			\;=\;g_i
			\end{aligned}
		\end{eqnarray*}
		 on the half-plane $\{(\bar{x},\bar{y})\,|\,\bar{y}>-\alpha_i{}y(p_i)\}$.
		 The measurable functions $b_1$, $b_2$, $c$, and $g$ are all uniformly bounded by $\Lambda$ on this half-plane.
		\item[{\it{b}})${}^\prime$] At the origin we have $|\nabla\log{}f_i|_{(0,0)}=1$ and $f_i(0,0)=1$
		\item[{\it{c}})${}^\prime$] In the ball of radius $i/4$ around the origin, we have $|\nabla\log{}f_i|\le4$.
	\end{itemize}
	A consequence of ({\it{c}})${}^{\prime}$ is that $f_i$ is bounded from above and below exponentially: $e^{-4\dist(p,o)}\le{}f_i(p)\le{}e^{4\dist(p,o)}$ for all $p$ within the ball of radius $i/4$ about the origin $o$.
	Because $|\bar{y}|\le{}i/4$ in the ball about the origin and because the half-plane $\{\bar{y}>-\alpha_iy(p_i)\}$ contains the half-plane $\{\bar{y}>-i\}$ (as a consequence of ({\it{a}}${}^\prime$)), we have that $\bar{y}+\alpha_iy(p_i)>\frac34i$ on the ball of radius $i/4$.
	Thus within this ball we have the estimate
	\begin{eqnarray}
		\begin{aligned}
		\left|\frac{\partial^2f_i}{\partial\bar{x}^2}
		+\frac{\partial^2f_i}{\partial\bar{y}^2}\right|
		&=
		\left|
		\frac{1}{\bar{y}+\alpha_iy(p_i)}
		\left(b_1\frac{\partial{}f_i}{\partial\bar{x}}+b_2\frac{\partial{}f_i}{\partial\bar{y}}\right)
		+\frac{cf_i-g_i}{\left(\bar{y}+\alpha_iy(p_i)\right)^2}
		\right| \\
		&\le\left(
		\frac{4}{3i}
		\left|b_1\frac{\partial\log{}f_i}{\partial\bar{x}}+b_2\frac{\partial\log{}f_i}{\partial\bar{y}}\right|
		+\frac{16}{9i^2}\left|c\right|\right)f_i
		+\frac{16}{9i^2}g_i \\
		&\le
		\left(\frac{16\Lambda}{3i}+\frac{16\Lambda}{9i^2}\right)f_i
		+\frac{16\Lambda}{i^2}
		\end{aligned} \label{IneqBallElliptic}
	\end{eqnarray}
	where we used $|\nabla\log{}f_i|<4$ and $|b_1|,|b_2|,|c|,|g_i|\le\Lambda$ in the last line.
	On any fixed pre-compact domain $\Omega$ containing the origin in the $(\bar{x},\bar{y})$ system, the value of $f_i$ is bounded by $e^{4\diam\Omega}$.
	
	We conclude that, on any fixed pre-compact domain $\Omega$, we have $|\triangle{}f_i|=O(i^{-1})$, and we also have that $f_i$ is bounded above and below by fixed exponential functions.
	By ({\it{a}})${}^\prime$ the Laplacian $\triangle{}f_i$ is at least measurable and by (\ref{IneqBallElliptic}) it is bounded, so the usual theory implies that $f_i$ has uniform $C^{1,\alpha}$ bounds within $\Omega$.
	Taking the limit as $i\rightarrow\infty$ and passing to a subsequence if necessary, we obtain $C^{1,\alpha}$ convergence $f_i\rightarrow{}f_\infty$ to some function $f_\infty$ that weakly (and therefore strongly) satisfies
	\begin{eqnarray}
		\triangle{}f_\infty\;=\;0. \label{EqnTraingleZero}
	\end{eqnarray}
	Because $\alpha_iy_i>i$, the half-planes $\{\bar{y}>-\alpha_iy_i\}$ converge to the entire plane $\mathbb{R}^2$ as $i\rightarrow\infty$ and so the $C^{1,\alpha}$ convergence $f_i\rightarrow{}f_\infty$ occurs on {\it every} pre-compact set.
	
	Thus $\triangle{}f_\infty=0$ on all of $\mathbb{R}^2$.
	Because the convergence was uniformly $C^{1,\alpha}$ on compact sets and because $f_i(0,0)=1$ and $|\nabla{}f_i|_{(0,0)}=1$, in the limit we retain $f_\infty(0,0)=1$ and $|\nabla{}f_{\infty}|_{(0,0)}=1$.
	
	Finally recall that the classical Liouville theorem states that any non-negative harmonic function on $\mathbb{R}^2$ is constant.
	This contradicts $|\nabla{}f_{\infty}|=1$ at $(0,0)$, and establishes the theorem.
\end{proof}

An immediate consequence of Proposition \ref{PropInteriorInitial} is polynomial bounds on $f$ in the $y$-direction: for any fixed $x_0$ and $0<y_1<y_2$ we have
\begin{eqnarray}
	\left(\frac{y_2}{y_1}\right)^{-D}
	\;\le\;\frac{f(x_0,y_1)}{f(x_0,y_2)}
	\;\le\;\left(\frac{y_2}{y_1}\right)^D. \label{IneqIneqGlobalYGrowth}
\end{eqnarray}
In the $x$-direction Proposition \ref{PropInteriorInitial} provides only exponential bounds: for fixed $y$ and any $x_1$, $x_2$ we have
\begin{eqnarray}
	Exp\left(-\frac{D}{y}|x_2-x_1|\right)
	\;\le\;\frac{f(x_2,y)}{f(x_1,y)}
	\;\le\;Exp\left(\frac{D}{y}|x_2-x_1|\right). \label{IneqGlobalXGrowth}
\end{eqnarray}

\begin{proposition}[The localized gradient estimate; sequential version] \label{PropGradEstSequential}
	There exists a constant $D=D(\Lambda)$ so that the following holds.
	Assume $b_1,b_2,c,g$ are measurable, $|b_1|,|b_2|,|c|,|g|<\Lambda$, and that $f>0$ solves $L(f)=g$ on $\Omega^{Int}$, where $\Omega$ is a neighborhood of a point $p\in\{y=0\}$ (see Definition \ref{DefOpens}).
	Then if $\{p_i\}$ is a sequence of points in $\Omega^{Int}$ converging to $p$, we have
	\begin{eqnarray}
		\lim_{i\rightarrow\infty}y(p_i)|\nabla\log{}f|_{p_i}\;\le\;D.
	\end{eqnarray}
\end{proposition}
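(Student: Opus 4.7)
The plan is to reduce to the blow-up argument already used in Proposition~\ref{PropInteriorInitial}, with the localization supplied by the fact that $p_i \to p \in \{y=0\}$ forces $y(p_i) \to 0$. I argue by contradiction: if the conclusion fails, then after passing to a subsequence I may assume $y(p_i)|\nabla\log f|_{p_i} > i$ for every $i$, and the goal is to extract a non-constant non-negative entire harmonic function on $\mathbb{R}^2$, contradicting the classical Liouville theorem.

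At each $p_i$ I run the iterative point-reselection scheme of the global proof: starting from $p_{i,1} = p_i$, I search in the disk of radius $i/(4|\nabla\log f|_{p_{i,k}})$ around $p_{i,k}$ for a point with four times the gradient, and iterate. The bookkeeping from the global proof (telescoping geometric series) bounds the total relocation distance by $\tfrac{2}{3}y(p_i)$, so every iterate $p_{i,k}$ lies in the Euclidean ball $\overline{B_{p_i}(\tfrac{2}{3}y(p_i))}$, which has $y$-coordinate $\ge \tfrac{1}{3}y(p_i) > 0$ and therefore sits inside the open upper half-plane. Moreover, since $p_i \to p$ and $y(p_i) \to 0$, this ball also fits inside any fixed Euclidean neighborhood of $p$ once $i$ is large. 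By Definition~\ref{DefOpens}, $\Omega$ contains $B_p(r) \cap \overline{H}^2$ for some $r > 0$, so eventually the entire search region lies in $\Omega^{Int}$, where the equation $L(f) = g$ holds and interior elliptic regularity forces termination at some $p_{i,k(i)}$, which I rename $p_i$. After relocation one still has $y(p_i)|\nabla\log f|_{p_i} \ge i$, and in addition $|\nabla\log f| \le 4|\nabla\log f|_{p_i}$ on the ball of radius $i/(4|\nabla\log f|_{p_i})$ about $p_i$.

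After normalizing $f_i(p_i) = 1$ and rescaling by $\alpha_i = |\nabla\log f|_{p_i}$ via $\bar x = \alpha_i(x - x(p_i))$, $\bar y = \alpha_i(y - y(p_i))$, conditions (\textit{a})${}^\prime$--(\textit{c})${}^\prime$ of the global proof hold on the disk of radius $i/4$ about the new origin, and the rescaled domain $\{\bar y > -\alpha_i y(p_i)\}$ exhausts $\mathbb{R}^2$ because $\alpha_i y(p_i) > i \to \infty$. The same Poisson-type estimate $|\triangle f_i| = O(i^{-1})$ on compact sets and the exponential two-sided bound on $f_i$ then yield $C^{1,\alpha}_{loc}$-subsequential convergence $f_i \to f_\infty$ on $\mathbb{R}^2$, with $f_\infty \ge 0$, $f_\infty(0,0) = 1$, $|\nabla f_\infty|_{(0,0)} = 1$, and $\triangle f_\infty = 0$. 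The classical Liouville theorem forces $f_\infty$ to be constant, giving the required contradiction.

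The only step requiring attention beyond the global proof is verifying that the search region stays inside $\Omega^{Int}$ throughout the point-picking; this is precisely where the hypothesis $p_i \to p \in \{y=0\}$ is used, since $\tfrac{2}{3}y(p_i) \to 0$ beats the Euclidean distance from $p_i$ to $\partial_1\Omega$. Once containment is secured, the remainder is a word-for-word repetition of the scale-invariant blow-up argument from Proposition~\ref{PropInteriorInitial}.
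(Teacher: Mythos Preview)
Your argument is essentially the paper's: point-pick to control the gradient on a ball of growing radius, rescale, blow up to an entire non-negative harmonic function, and invoke the classical Liouville theorem. You also correctly isolate the one new ingredient over the global case---keeping the entire point-picking search region inside $\Omega^{Int}$---and your justification via $\tfrac{2}{3}y(p_i)\to 0$ is exactly right.

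One point needs tightening. As written, your contradiction hypothesis fixes a single $f$, $L$, $\Omega$ and assumes $y(p_i)|\nabla\log f|_{p_i}\to\infty$. That only shows the limsup is finite for each fixed setup; it does not yet give a bound $D$ depending on $\Lambda$ alone. The correct negation of the statement produces, for each $i$, possibly different data $L_i$, $f_i$, $g_i$, $\Omega_i$, $p^{(i)}$, and a point $p_i$ near $p^{(i)}$ with $y(p_i)|\nabla\log f_i|_{p_i}>i$. The paper deals with the varying domains by first rescaling coordinates (using scale-invariance of both $L$ and $y|\nabla\log f|$) so that every $\Omega_i$ contains a fixed half-disk around the origin; after that normalization your argument runs verbatim with $f_i$ in place of $f$, since the blow-up estimate uses nothing about the solution beyond the uniform bound $\Lambda$ on the coefficients. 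This is a cosmetic repair rather than a new idea, but without it the claim $D=D(\Lambda)$ is not established.
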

{\bf Remark.} Notice we do not require solution be locally bounded at $p$.
\begin{proof}
	By shifting in the $x$-coordinate, we may assume $p$ has coordinates $(0,0)$.
	
	For a proof by contradiction, assume there is a sequence of points $p_i$ along with operators $L_i$ satisfying the hypotheses, and solutions $f_i$ to $L_i(f_i)=g_i$ so that $y(p_i)|\nabla{}f_i|_{p_i}^2\rightarrow\infty$.
	Passing to a subsequence if necessary we may assume both
	\begin{eqnarray}
		y(p_i)|\nabla\log{}f_i|_{p_i}\;>\;i, \quad\text{and}\quad 
		dist(p_i,p)\,<\,1/i. \label{IneqsDefiningSEquenceIneqs}
	\end{eqnarray}
	It might be objected that we must also vary the domain $\Omega$, or else the constant $D$ might depend on the domain of definition $\Omega$.
	But to see that actually $D$ is independent of the domain $\Omega$, notice that the operator $L$ and the expression $y|\nabla\log{}f|$ are invariant under simultaneous rescaling of both coordinates.
	For this reason, given any neighborhood $\Omega$ of $p$, we may simply rescale the coordinates so that $\Omega$ contains, say, the open set $\{(x,y)\,|\,x^2+y^2<1,\,y\ge0\}$.
	
	Following the proof of Proposition \ref{PropInteriorInitial}, the first step is to improve the choice of the points $p_i$.
	For convenience let denote $p_{i}$ by $p_{i,1}$.
	Note that the ball of radius $i/4|\nabla\log{}f_i|_{p_{i,1}}$ around $p_{i,1}$ is still in the upper half-plane, since $i/4|\nabla\log{}f_i|_{p_{i,1}}<y(p_{i,1})/4$.
	Let $p_{i,2}$ be any point in this ball where $|\nabla\log{}f_i|_{p_{i,2}}>4|\nabla\log{}f_i|_{p_{i,1}}$.
	We retain $y(p_{i,2})|\nabla\log{}f_i|_{p_{i,2}}^2>i$ since
	\begin{eqnarray}
		\begin{aligned}
		y(p_{i,2})|\nabla\log{}f_i|_{p_{i,2}}^2
		&\;\ge\;
		\left(y(p_{i,1})-\frac{y(p_{i,1})}{4}\right)\cdot4|\nabla\log{}f_i|_{p_{i,1}} \\
		&\;\ge\;3y(p_{i,1})|\nabla\log{}f_i|_{p_{i,1}}
		\;\ge\;3i
		\end{aligned}
	\end{eqnarray}
	We do not necessarily retain $dist(p,p_{i,2})<1/i$, but we come close: using the fact that $p_{i,2}$ is in the ball of radius $i/4|\nabla\log{}f_i|_{p_{i,1}}<y(p_{i,1})/4$ around $p_{i,1}$ we see
	\begin{eqnarray}
		\begin{aligned}
		dist(p,p_{i,2})&\;\le\;\dist(p,p_{i,1})+\dist(p_i,p_{i,2}) \\
		&\;\le\;\frac1i+\frac14y(p_{i,1})
		\;\le\;\frac1i+\frac{1}{4i}\;=\;\frac54\frac1i
		\end{aligned}
	\end{eqnarray}
	and we also have an estimate on the y-coordinate of $p_{i,2}$:
	\begin{eqnarray}
		y(p_{i,2})
		\;\ge\;y(p_{i,1})-\frac{i}{4|\nabla\log{}f_i|_{p_{i,1}}}
		\;\ge\;y(p_{i,1})-\frac14y(p_{i,1})\;=\;\frac34y(p_{i,1}).
	\end{eqnarray}
	But possibly $p_{i,2}$ can also be improved.
	For an inductive process, assume $p_{i,1},\dots,p_{i,k-1}$ have been chosen in such a way that
	\begin{eqnarray}
		\begin{aligned}
		&|\nabla\log{}f_i|_{p_{i,j}}
		>4^j|\nabla\log{}f_i|_{p_{i,1}}, \\
		&y(p_{i,j})
		\ge\left(1-\sum_{n=1}^j\left(\frac14\right)^n\right)y(p_i), \\ &y(p_{i,j})|\nabla\log{}f_i|_{p_{i,j}}
		>4^j\left(1-\sum_{n=1}^j\left(\frac14\right)^n\right)i, \;\;\text{and} \\
		&dist(p,p_{i,j})
		\;\le\;\frac1i\sum_{n=0}^j\left(\frac14\right)^n.
		\end{aligned} \label{EqnsTheFourConditions}
	\end{eqnarray}
	Now choose the next point $p_{i,k}$ to be any point in the ball about $p_{i,k-1}$ of radius $i/(4|\nabla\log{}f_i|_{p_{i,k-1}})$ that has $|\nabla\log{}f_i|_{p_{i,k}}\ge4|\nabla\log{}f_i|_{p_{i,k-1}}$, should such a point exist.
	Assuming it exists, we verify the four conditions (\ref{EqnsTheFourConditions}).
	The first condition is immediate from the choice of $p_{i,k}$.
	The second condition follows from the choice of $p_{i,k}$ being within the ball of radius $i/4|\nabla\log{}f_i|_{p_{i,k-1}}$, and therefore $y(p_{i,k})>y(p_{i,k-1})-i/4|\nabla\log{}f_i|_{p_{i,k-1}}$, which we estimate to be
	\begin{eqnarray*}
		\begin{array}{rll}
		y(p_{i,k})
		&>y(p_{i,k-1})-\frac{i}{4|\nabla\log{}f_i|_{p_{i,k-1}}} \\
		&\ge
		y(p_{i,k-1})-
		\frac{1}{4}\frac{i}{4^{k-1}|\nabla\log{}f_i|_{p_{i,1}}}
		& \text{(by the inductive hypothesis)} \\
		&\ge
		\left(1-\sum_{n=1}^{k-1}\left(\frac14\right)^n\right)y(p_{i,1})
		-\frac{1}{4^{k}}y(p_{i,1})
		& \text{(by}\;y(p_{i,1})|\nabla\log{}f_i|_{p_{i,1}}>i \text{)}\\
		&=\left(1-\sum_{n=1}^{k}\left(\frac14\right)^n\right)y(p_{i,1}).
		\end{array}
	\end{eqnarray*}
	Verifying the inequalities of (\ref{EqnsTheFourConditions}) for $k$, the third inequality is immediate, and the fourth inequality follows from the fact that $p_{i,k}$ is in the ball of radius $i/4|\nabla\log{}f_i|_{p_{i,1}}<\frac{1}{4^k}y(p_{i,1})\le\frac{1}{4^k}dist(p,p_{i,1})$ and using the inductive hypothesis we have
	\begin{eqnarray}
		\begin{aligned}
		dist(p,p_{i,k})
		&\;\le\;dist(p,p_{i,k-1})+dist(p_{i,k-1},p_i^k) \\
		&\;\le\;
		\frac1i\sum_{n=1}^{k-1}\left(\frac14\right)^n
		+\frac{1}{4^k}dist(p,p_{i,1})
		\;\le\;\frac1i\sum_{n=1}^{k}\left(\frac14\right)^n
		\end{aligned}
	\end{eqnarray}
	as desired.
	
	This process cannot continue indefinitely and must terminate at a finite stage; the reason is that this process finds points $p_{i,k}$ for which the gradient grows unboundedly, even though the points $p_{i,k}$ remain {\it bounded away} from the line $\{y=0\}$---by (\ref{EqnsTheFourConditions}) we certainly have $y(p_{i,k})>\frac23y(p_{i,1})$ so the $y$-values of the $p_{i,k}$ remain bounded away from 0.
	But the operator $L_i$ remains uniformly elliptic away from $\{y=0\}$, and so it is impossible that $\lim_k|\nabla\log{}f|_{p_{i,k}}$ be infinite.
	
	Replace $p_i$ with the newly-reselected terminal point of this point-picking process.
	This means that within the ball of radius $i/4|\nabla\log{}f_i|_{p_i}$ we have bounded gradient: $|\nabla\log{}f_i|<4|\nabla\log{}f_i|_{p_i}$.
	Indeed, these improved points $\{p_i\}$ now satisfy the three conditions:
	\begin{itemize}
		\item[{\it{a}})] $f_i>0$ satisfies $L(f_i)=g_i$ on $\Omega$, where $b_1,b_2,c,g_i$ are measurable and bounded by $\Lambda$
		\item[{\it{b}})] We have points $p_i$ in $\Omega$ with $dist(p,p_i)\le\frac43i^{-1}$ and $y(p_i)|\nabla\log{}f_i|_{p_i}\ge{}i$
		\item[{\it{c}})] On the ball of radius $i/(4|\nabla\log{}f_i|_{p_i})$ about $p_i$ we have the gradient bound $|\nabla\log{}f_i|<4|\nabla\log{}f_i|_{p_i}$
	\end{itemize}
	Now we scale the coordinate system: set $\alpha_i=|\nabla\log{}f_i|_{p_i}$ and note that $\alpha_iy(p_i)\ge{}i$.
	For each $i$ create coordinates
	\begin{eqnarray}
		\begin{aligned}
		&\bar{x}\;=\;\alpha_i(x-x(p_i)) \\
		&\bar{y}\;=\;\alpha_i(y-y(p_i)).
		\end{aligned}
	\end{eqnarray}
	Then the point $p_i$ has coordinates $(0,0)$ and after scaling $f_i$ so $f_i(0,0)=1$ and transforming the coordinate system, we have $|\nabla\log{}f_i|_{(0,0)}=|\nabla{}f_i|_{(0,0)}=1$.
	In the new coordinates condition ({\it{c}}) becomes the condition that $|\nabla\log{}f_i|<4$ on the ball of radius $i/4$.
	In fact conditions ({\it{a}})-({\it{c}}) now read
	\begin{itemize}
		\item[{\it{a}})${}^\prime$] $f_i>0$ satisfies
		\begin{eqnarray*}
			\begin{aligned}
				&\left(\bar{y}+\alpha_iy(p_i)\right)^2\left(\frac{\partial^2f_i}{\partial\bar{x}^2}+\frac{\partial^2f_i}{\partial\bar{y}^2}\right) \\
				&\quad+\left(\bar{y}+\alpha_iy(p_i)\right)\left(b_1\frac{\partial{}f_i}{\partial\bar{x}}+b_2\frac{\partial{}f_i}{\partial\bar{y}}\right)
				+cf_i
				\;=\;g_i
			\end{aligned}
		\end{eqnarray*}
		on the half-plane $\{(\bar{x},\bar{y})|\bar{y}>-\alpha_i{}y(p_i)\}$, where $b_1,b_2,c,g_i$ are measurable and bounded by $\Lambda$
		\item[{\it{b}})${}^\prime$] At the origin, $|\nabla\log{}f_i|_{(0,0)}=1$ and $f_i(0,0)=1$
		\item[{\it{c}})${}^\prime$] On the ball of radius $i/4$ around the origin we have $|\nabla\log{}f_i|<4$.
	\end{itemize}
	Condition ({\it{c}})${}^\prime$, the statement of uniform $C^1$ bounds on large balls, gives in particular the pointwise bounds $e^{-4dist(p,o)}<f_i(p)<e^{4dist(p,o)}$.
	From the equation $L_i(f_i)=g_i$, we estimate within the ball of radius $i/4$ that
	\begin{eqnarray}
		\begin{aligned}
		\left|\frac{\partial^2f_i}{\partial\bar{x}^2}
		+\frac{\partial^2f_i}{\partial\bar{y}^2}\right|
		&=
		\left|
		\frac{1}{\bar{y}+\alpha_iy(p_i)}
		\left(b_{i,1}\frac{\partial{}f_i}{\partial\bar{x}}+b_{2,i}\frac{\partial{}f_i}{\partial\bar{y}}\right)
		+\frac{c_if_i-g_i}{\left(\bar{y}+\alpha_iy(p_i)\right)^2}
		\right| \\
		&\le\left(
		\frac{4}{3i}
		\left|b_{i,1}\frac{\partial\log{}f_i}{\partial\bar{x}}+b_{i,2}\frac{\partial\log{}f_i}{\partial\bar{y}}\right|
		+\frac{16}{9i^2}\left|c_i\right|\right)f_i
		+\frac{16}{9i^2}g_i \\
		&\le
		\left(\frac{16\Lambda}{3i}+\frac{16\Lambda}{i^2}\right)f_i
		+\frac{16\Lambda}{i^2}
		\end{aligned}
	\end{eqnarray}
	where we used that $|\nabla\log{}f|<4$ and $\alpha_iy(p_i)>i$, as noted above.
	On any pre-compact domain $\Omega$ that contains the origin, we have the bound $f_i<e^{4diam(\Omega)}$.
	
	We conclude that, on any fixed compact set in the $(\bar{x},\bar{y})$-plane, we see that $|\triangle{}f_i|\searrow0$.
	We therefore obtain $C^{1,\alpha}$ convergence of $f_i$ to some limiting function $f_\infty$ that exists on the entire $(\bar{x},\bar{y})$-plane.
	
	Because the convergence is $C^{1,\alpha}$ and because $f_i>0$, $f_i(0,0)=1$, $|\nabla{}f_i|_{(0,0)}=1$, we obtain in the limit an entire function $f_\infty$ with
	\begin{eqnarray}
		\triangle{}f_\infty=0, \quad f_\infty>0,
		\quad f_\infty(0,0)=1, \quad |\nabla{}f_\infty|_{(0,0)}=1.
	\end{eqnarray}
	But with $f_\infty$ harmonic and non-negative on $\mathbb{R}^2$, the classical Liouville theorem says $f_\infty$ is constant.
	This contradiction establishes the proposition.
\end{proof}

\begin{proposition}[The localized gradient estimate; domain version, {\it cf.} Proposition \ref{PropFirstLocalGradEst}] \label{PropLocDomVer}
	Assume $|b_1|,|b_2|,|c|\le\Lambda$.
	There exists a constant $D=D(\Lambda)$ so that the following holds.
	
	Assume $p\in\{y=0\}$, $\Omega$ is a neighborhood of $p$, $f\ge0$, and $|L(f)|\le\Lambda$ on $\Omega^{Int}$.
	Then there exists some neighborhood $\Omega'\subset\Omega$ of $p$ so that
	\begin{eqnarray}
		y|\nabla\log{}f|\;\le\;D
	\end{eqnarray}
	on $\Omega'{}^{Int}$.
\end{proposition}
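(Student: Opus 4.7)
The plan is to deduce the domain statement directly from the sequential version (Proposition \ref{PropGradEstSequential}) by a short contradiction argument. Let $D_0 = D_0(\Lambda)$ be the constant produced by Proposition \ref{PropGradEstSequential}, and take the constant asserted in the present proposition to be $D := D_0 + 1$; any strict enlargement of $D_0$ would do equally well.

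Suppose for contradiction that no neighborhood $\Omega'\subset\Omega$ of $p$ satisfies $y|\nabla\log f|\le D$ throughout $\Omega'{}^{Int}$. Fix a shrinking basis of neighborhoods of $p$, for instance
$$
\Omega'_n \;:=\; \big\{ q \in \overline{H}{}^2 \,:\, |q-p| < 1/n \big\},
$$
defined for $n$ large enough that $\Omega'_n \subset \Omega$. By the standing assumption each such $\Omega'_n$ fails the desired estimate, so there is a point $q_n \in (\Omega'_n)^{Int}\subset\Omega^{Int}$ with $y(q_n)|\nabla\log f|_{q_n} > D$. Since $|q_n - p| < 1/n$ we have $q_n\to p$, and $\{q_n\}$ is a sequence of interior points along which $f$ solves $L(f)=g$ with $|g|\le\Lambda$. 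Applying Proposition \ref{PropGradEstSequential} now yields $\limsup_{n} y(q_n)|\nabla\log f|_{q_n} \le D_0$, which directly contradicts $y(q_n)|\nabla\log f|_{q_n} > D_0+1$ for every $n$. This contradiction produces the desired neighborhood $\Omega'$.

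All of the substantive analytic machinery --- the point-picking improvement, the blow-up at a boundary point, the $C^{1,\alpha}$ compactness of the rescaled problems, and the classical Euclidean Liouville theorem --- has already been discharged inside the sequential version, so there is no real obstacle in the present deduction. The argument is essentially a passage from a ``limit-along-approaching-sequences'' statement to a ``uniform-on-a-neighborhood'' statement, made possible by the fact that the sequential constant $D_0$ depends only on $\Lambda$, not on $\Omega$, $f$, or $p$. The only very mild technicality is that $|\nabla\log f|$ is not classically defined where $f=0$; this is harmless because one need only bound the quantity where it is meaningful, and at zeros of $f$ one takes $y|\nabla\log f|$ to vanish by the usual convention so that the asserted bound is automatic there.
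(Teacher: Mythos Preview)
Your proof is correct and follows essentially the same route as the paper: argue by contradiction, extract a sequence of interior points approaching $p$ that violate the gradient bound, and appeal to the sequential version (Proposition~\ref{PropGradEstSequential}). The one substantive difference is that you keep $f$ fixed throughout (as the stated proposition permits), while the paper allows the function $f_i$ to vary with $i$; this is what lets the paper conclude in its subsequent remark that the neighborhood $\Omega'$ depends only on $\Omega$ and $\Lambda$, not on $f$---a uniformity that is later used in the proof of Theorem~\ref{ThmContinuity}. Your argument, as written, yields an $\Omega'$ that may a priori depend on $f$, which is all the proposition literally asserts.
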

\begin{proof}
	If not, then for every neighborhood $B_i=B_p(1/i)\cap\{y\ge0\}$ we can pick a non-negative solution $f_i$ on $\Omega^{Int}$ and a point $p_i\in{}B_i$ so that $y(p_i)|\nabla\log{}f_i|_{p_i}\rightarrow\infty$.
	This contradicts Proposition \ref{PropGradEstSequential}.	
\end{proof}
{\bf Remark.} The sub-neighborhood $\Omega'\subset\Omega$ depends on the original domain $\Omega$ and the value of $\Lambda$, but does not depend on the function $f$---this is because in the proof we allow the function $f_i$ to vary as the point $p_i$ approaches the limit.
This type of uniformity is necessary in the proof of continuity, Proposition \ref{ThmContinuity} below.

\section{Local theorems at the boundary, and almost-monotonicity}

Most of results of this section stem from use of the lower barrier constructed in Lemma \ref{LemmaISubfunction}.
Depicted in Figure \ref{SubFigBarrierWithLines}, this barrier forces uniform positive bounds on $f(x,0)$ at the degenerate boundary $\{y=0\}$, assuming only uniform positive bounds on $f$ at some point $(x,y)$ of the interior.

\subsection{Unspecifiability at Interior Boundary Points} \label{SubSecUnspecifiability}

The behavior of solutions $f$ of $L(f)=g$ at the degenerate boundary $\{y=0\}$ is a central issue.
Under the assumption that $f$ is locally finite and $b_2\ge1$ we show that assigning boundary values at the degenerate boundary $\{y=0\}$ is impossible.

For more on the issue of specifiability and non-specifiability, see Example \ref{ExImps} where, in the constant-coefficient case, we obtain explicit expressions for Kernels in the case $b_2<1$.
These Kernels allow specification of boundary values at $\{y=0\}$.

\begin{proposition}[Unspecifiability of boundary values at $\{y=0\}$] \label{PropUnspecifiability}
	Assume the functions $b_1,b_2,c,g$ are measurable and locally finite.	
	Let $p=(x_0,0)$ be a point on the boundary line $\{y=0\}$ and assume $b_2\ge1$ and $c\le\inf\frac14(b_2-1)^2$ in some pre-compact neighborhood $\Omega$ of $p$.
	Assume two functions $f_1,f_2$ satisfy $L(f_i)=g$ on $\Omega$ in the sense of Definition \ref{DefSolsAtDegBound} and that $f_1=f_2$ on $\partial_1\Omega$ (see Definition \ref{DefBoundComps}).
	Then $f_1=f_2$.
\end{proposition}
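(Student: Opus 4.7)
The plan is to linearize and compare against an Euler-type barrier that blows up at the degenerate boundary. Set $u=f_1-f_2$. By linearity of $L$, $u$ weakly solves $L(u)=0$ on $\Omega^{Int}$, $u=0$ on $\partial_1\Omega$, and the $\liminf$/$\limsup$ of $u$ at each point of $\partial_0\Omega$ are finite because those of $f_1$ and $f_2$ are. Combined with interior De~Giorgi--Nash--Moser estimates for the uniformly elliptic operator $L|_{\Omega^{Int}}$ and the precompactness of $\Omega$, this gives a uniform bound $|u|\le M$ on $\Omega$. It suffices to show $u\le 0$; the symmetric argument applied to $-u$ then forces $u\equiv 0$.

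The main barrier is the Euler profile $\phi(y)=y^{-\alpha}$, for which a direct computation yields
\[
L(y^{-\alpha})\;=\;y^{-\alpha}\bigl(\alpha^{2}-(b_{2}-1)\alpha+c\bigr).
\]
Let $\beta=\inf_{\Omega}b_{2}\ge 1$ and $\gamma=\sup_{\Omega}c$. The hypothesis $c\le\inf\tfrac14(b_{2}-1)^{2}$ gives $\gamma\le\tfrac14(\beta-1)^{2}$, so the quadratic $\alpha^{2}-(\beta-1)\alpha+\gamma$ has real roots, and I pick $\alpha>0$ strictly between them. Because $\alpha^{2}-(b_{2}-1)\alpha+c$ is decreasing in $b_{2}$ and increasing in $c$ for $\alpha>0$, this single $\alpha$ forces $L(\phi)\le 0$ throughout $\Omega$, while $\phi>0$ and $\phi(y)\to\infty$ as $y\to 0^{+}$. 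In the exceptional corner $\beta=1$, $\gamma=0$ (where the two roots coincide at $0$), after shrinking $\Omega\subset\{y<1\}$ one substitutes the logarithmic barrier $\phi=-\log y$, which satisfies $L(\phi)=(1-b_{2})+c(-\log y)\le 0$ under $b_{2}\ge 1$ and $c\le 0$ and blows up at $y=0$ the same way.

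Now on the truncated domain $\Omega_{\delta}=\Omega\cap\{y>\delta\}$ the operator $L$ is uniformly elliptic. The possible positivity of $c$ is absorbed by the change of unknown $v=\phi w$, which transforms $L$ into an operator $\tilde L$ with modified drift and zeroth-order coefficient $L(\phi)/\phi\le 0$; the textbook weak maximum principle then applies to $\tilde L$. Set $\eta:=M/\phi(\delta)$. Then $L(u-\eta\phi)=-\eta L(\phi)\ge 0$, and the boundary inequality $u-\eta\phi\le 0$ holds on $\partial_{1}\Omega\cap\{y\ge\delta\}$ (since $u=0$ there and $\eta\phi\ge 0$) and on $\{y=\delta\}$ (since $u\le M=\eta\phi(\delta)$). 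Applying the maximum principle to $w=(u-\eta\phi)/\phi$ yields
\[
u(x,y)\;\le\;\frac{M\,\phi(y)}{\phi(\delta)}\qquad\text{on }\Omega_{\delta}.
\]
For each fixed $(x,y)\in\Omega^{Int}$, sending $\delta\to 0^{+}$ drives the right side to zero, so $u\le 0$ on $\Omega^{Int}$, and hence $u\equiv 0$ by symmetry.

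The main obstacle is that $c$ need not be non-positive, so the classical maximum principle is unavailable and must be recovered through a positive $L$-supersolution; crucially, this supersolution must simultaneously blow up at the degenerate boundary so that the uniform bound on $u$ along $\{y=\delta\}$ can be absorbed in the limit $\delta\to 0^{+}$. The condition $c\le\tfrac14(b_{2}-1)^{2}$ together with $b_{2}\ge 1$ is precisely what allows both features to coexist, via the real roots of the associated Euler characteristic equation.
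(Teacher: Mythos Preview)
Your proof is correct and follows essentially the same route as the paper: both pin $u=f_1-f_2$ beneath an Euler-type supersolution $y^{-\alpha}$ (or $-\log y$ in the borderline case) that blows up at $\{y=0\}$ while $u$ stays bounded, then send a parameter to zero---the paper fixes the exponent $\alpha=(\inf b_2-1)/2$ and slides the multiplicative constant $\epsilon\searrow 0$, whereas you truncate to $\Omega_\delta$ and let $\delta\searrow 0$ after the substitution $w=u/\phi$, which is an equivalent packaging. One small wording fix: ``strictly between the roots'' leaves out the equality case $\gamma=\tfrac14(\beta-1)^2$ with $\beta>1$, where the roots coincide at $(\beta-1)/2>0$; simply allow $\alpha$ to equal that double root (the quadratic then vanishes and $L(\phi)\le 0$ still holds), which is in fact exactly the paper's choice.
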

\begin{proof}
	The function $f=f_1-f_2$ is continuous on $\overline{\Omega}$, and zero on $\partial\Omega\setminus\{y=0\}$.
	
	Set $\lambda=\inf{}b_2-1$ and consider the function $\psi_\epsilon=\epsilon{}y^{-\lambda/2}$, or $\psi_\epsilon=\epsilon(-\log(y)+\log(\diam\Omega))$ when $\inf{}b_2=1$.
	For $\lambda>0$ we compute
	\begin{eqnarray}
		\begin{aligned}
			L(\psi_\epsilon)
			&\;=\;y^2\left(\psi_\epsilon\right)_{yy}
			+yb_2(\psi_\epsilon)_y
			+c\psi_\epsilon \\
			&\;=\;\epsilon\left(\frac{\lambda^2}{4}+\frac{\lambda}{2}-b_2\frac{\lambda}{2}+c \right)y^{-\lambda/2} \\
			&\;\le\;\epsilon\left(\frac{\lambda^2}{4}-\frac{\lambda^2}{2}+c \right)y^{-\lambda/2}
			\;\le\;0
		\end{aligned}
	\end{eqnarray}
	and similarly for the case $\lambda=0$.

	Next we show that $\psi_\epsilon$ dominates $f$ as long as $\epsilon>0$.
	Note that, by the boundedness of $f$, if $\epsilon$ is very large then certainly $\psi_\epsilon<f$.
	Then we may lower the value of $\epsilon$ until we find the first $\epsilon>0$ with $\psi_\epsilon(x,y)=f(x,y)$ at some point $(x,y)\in\overline{\Omega}$.
	By boundedness of $f$ certainly $y>0$ at this point, and by the fact that $f=0$ but $\psi_\epsilon>0$ on $\partial\Omega\setminus\{y=0\}$ certainly also $(x,y)\notin\partial\Omega\setminus\{y=0\}$ because $f$ is zero there.
	
	Thus we have an interior point $(x,y)\in\Omega^{Int}$ at which $f=\psi_\epsilon$, even though $f\ge\psi_\epsilon$ and $\psi_\epsilon$ is a supersolution.
	But at all points of $\Omega^{Int}$ the operator $L$ is uniformly elliptic, and so we have a contradiction with the maximum principle.

	This contradiction forces $f<\psi_\epsilon$ for all positive $\epsilon$, so $f\le0$.
	Replacing $f$ with $-f$ we see also $f\ge0$.
	Thus $f\equiv0$ and so $f_1\equiv{}f_2$.
\end{proof}

{\bf Remark}.
Definition \ref{DefSolsAtDegBound} demands $f$ only be $L_{loc}^\infty$ near degenerate boundary points.
This modest demand is crucial; see Example \ref{ExImps} for a counterexample when local finiteness fails at a single point of the degenerate boundary.

\subsection{The lower barrier. The Harnack inequality at the boundary}

We first produce a function $\psi_{y_0}$ that has compact support in the strip $y\in[0,y_0]$, which solves $L(\psi_{y_0})\ge0$, and which has $\psi_{y_0}(0,0)=1/9$.

What gives this subfunction so much power is that for it to be a lower barrier we must only check that $\psi_{y_0}\le{}f$ {\it on the line $\{y=y_0\}$}.
Once this is done we retain $\psi_{y_0}\le{}f$ on the whole strip $y\in[0,y_0]$, and most particularly at the line $\{y=0\}$ itself, where $\psi_{y_0}(0,y_0)=C_\Lambda$.

\begin{lemma}[The lower barrier $\psi_{y_0}$] \label{LemmaISubfunction}
	Assume $c\ge0$, $b_2\ge1$, and $|b_2|<\Lambda$.
	Choose any number $y_0>0$, and let $\mathcal{R}_{y_0}$ be the rectangle
	\begin{eqnarray}
	\mathcal{R}_{y_0}=\left\{(x,y)\;\big|\;y\in[0,y_0],\,x\in[-4y_0\Lambda,4y_0\Lambda]\right\}.
	\end{eqnarray}
	Define the function $\psi_{y_0}=\psi_{y_0}(x,y)$ to be
	\begin{eqnarray}
		\begin{aligned}
		\psi_{y_0}
		=
		\frac{10}{9}e^{-\frac{\pi}{8y_0\Lambda}y}\,
		{}_1F_1\left(\frac{1+\sqrt{3}\Lambda}{2};\,1;\,\frac{\pi}{4y_0\Lambda}y\right)
		\cos\left(\frac{\pi}{8y_0\Lambda}x\right)
		-1
		\end{aligned}
	\end{eqnarray}
	on $\mathcal{R}_{y_0}$, where the ${}_1F_1$ is confluent hypergeometric function of the first kind (as depicted in Figure \ref{FigBarrierOrig}).
	Then $\psi_{y_0}$ has the following properties:
	\begin{itemize}
		\item[{\it{i}})] $L(\psi_{y_0})\ge0$ at all points $(x,y)\in\mathcal{R}_{y_0}$ for which $\psi_{y_0}(x,y)\ge0$.
		\item[{\it{ii}})] On the edge $y=y_0$, $x\in[-4y_0\Lambda,4y_0\Lambda]$ we have $\psi_{y_0}\le2\cos\left(\frac{\pi}{8y_0\Lambda}x\right)-1$ and in particular $\psi_{y_0}\le1$
		\item[{\it{iii}})] On the edges $x=\pm4y_0\Lambda$, $y\in[0,y_0]$ we have $\psi_{y_0}\le0$
		\item[{\it{iv}})] On the edge $y=0$, $x\in[-4y_0\Lambda,4y_0\Lambda]$, $\psi_{y_0}(x,0)=\frac{10}{9}\cos\left(\frac{\pi}{8y_0\Lambda}x\right)-1$
		\item[{\it{v}})] At $(0,0)$, in particular, we have $\psi_{y_0}=\frac19$.
	\end{itemize}
	Finally $\psi_{y_0}$ is a subfunction on $\mathcal{R}_{y_0}$ provided it is a subfunction on the edge $y=y_0$, $x\in[-4y_0\Lambda,4y_0\Lambda]$.
	Specifically, assuming $f\ge0$ and $L(f)\le0$ on $\mathcal{R}_{y_0}$, then if $f\ge\psi_{y_0}$ on the edge $y=y_0$, $x\in[-4y_0\Lambda,4y_0\Lambda]$, we have $f\ge\psi_{y_0}$ on the entire closed set $\mathcal{R}_{y_0}$.
\end{lemma}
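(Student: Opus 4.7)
The proof has two parts: verify the listed properties (i)--(v) of $\psi_{y_0}$, and then derive the barrier conclusion from a maximum principle argument. Properties (iii)--(v) are immediate substitutions: at $x = \pm 4 y_0 \Lambda$ the cosine vanishes, giving $\psi_{y_0} = -1 \leq 0$; at $y = 0$ the factor ${}_1F_1(\beta; 1; 0) = 1$ yields $\psi_{y_0}(x,0) = \tfrac{10}{9}\cos(\alpha x) - 1$ where $\alpha := \pi/(8y_0\Lambda)$, which at $x = 0$ equals $\tfrac{1}{9}$. Property (ii) follows from a numerical estimate on ${}_1F_1(\beta; 1; 2\alpha y_0)$, using that the argument $2\alpha y_0 = \pi/(4\Lambda)$ is a fixed, $y_0$-independent number.

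The substance of the lemma is (i). Write $\psi_{y_0} + 1 = \tfrac{10}{9} h(y) \cos(\alpha x)$ with $h(y) = e^{-\alpha y} w(2\alpha y)$ and $w(\eta) = {}_1F_1(\beta; 1; \eta)$, and compute directly
\begin{eqnarray*}
L(\psi_{y_0}) = \tfrac{10}{9}\cos(\alpha x)\, Q(y) - \tfrac{10}{9} y b_1 \alpha h \sin(\alpha x) - c,
\end{eqnarray*}
where $Q(y) = y^2(h'' - \alpha^2 h) + y b_2 h' + c h$. Setting $\eta = 2\alpha y$ and invoking Kummer's equation $\eta w'' + (1 - \eta) w' - \beta w = 0$ collapses $Q$ to $Q(y) = h(y) B(\eta)$ with
\begin{eqnarray*}
B(\eta) := (b_2 - 1)\,\eta\,\tfrac{w'(\eta)}{w(\eta)} + \eta\bigl(\beta - \tfrac{b_2}{2}\bigr) + c,
\end{eqnarray*}
every summand of which is non-negative, since $b_2 \geq 1$, $c \geq 0$, $w, w' > 0$ on $(0, \infty)$, and the choice $\beta = (1 + \sqrt{3}\Lambda)/2$ forces $\beta - b_2/2 > 0$ uniformly for $b_2 \in [1, \Lambda]$.

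On $\{\psi_{y_0} \geq 0\}$ the identity $\tfrac{10}{9} h \cos(\alpha x) = 1 + \psi_{y_0}$ rewrites the cosine term as $(1 + \psi_{y_0}) B$, and after the $-c$ outside cancels the $+c$ inside $B$, one obtains
\begin{eqnarray*}
L(\psi_{y_0}) = (b_2 - 1)\eta\tfrac{w'}{w} + \eta\bigl(\beta - \tfrac{b_2}{2}\bigr) + \psi_{y_0} B - \tfrac{10}{9} y b_1 \alpha h \sin(\alpha x).
\end{eqnarray*}
Since $\psi_{y_0} B \geq 0$, the main obstacle is to dominate the last $\sin$-term by the two explicit positive contributions (with $(b_2 - 1)\eta w'/w$ providing extra slack when $b_2 > 1$). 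On $\mathcal{R}_{y_0}$ one has $y \alpha \Lambda \leq \pi/8$, and on the level set $\{\psi_{y_0} = 0\}$ one has the sharper bound $|\sin(\alpha x)| \leq \sqrt{1 - 81/(100 h^2)}$; the specific numerical values of $\alpha$ and $\beta$ in the statement are calibrated precisely so that this balancing succeeds uniformly over $b_2 \in [1, \Lambda]$, $|b_1| \leq \Lambda$.

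For the barrier conclusion, let $f \geq 0$ satisfy $L(f) \leq 0$ on $\mathcal{R}_{y_0}$ with $f \geq \psi_{y_0}$ on the top edge $\{y = y_0\}$. Property (iii) gives $\psi_{y_0} \leq 0 \leq f$ on the side edges, so $u := f - \psi_{y_0}$ is non-negative on $\partial\mathcal{R}_{y_0} \setminus \{y = 0\}$. Wherever $u < 0$ one has $\psi_{y_0} > 0$, whence $L(\psi_{y_0}) \geq 0$ by (i) and so $L(u) \leq 0$ there. Following the proof of Proposition \ref{PropUnspecifiability}, introduce the supersolution $\psi_\epsilon = \epsilon y^{-(\inf b_2 - 1)/2}$ (or $-\epsilon \log y$ when $\inf b_2 = 1$). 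For large $\epsilon$ certainly $-u \leq \psi_\epsilon$; as $\epsilon$ decreases, the first touching point cannot lie in the open interior (by the interior maximum principle, since $L$ is uniformly elliptic on $\{y > 0\}$ and $c \geq 0$), nor on the non-degenerate edges (where $-u \leq 0 < \psi_\epsilon$), nor on the degenerate edge (where $\psi_\epsilon$ blows up). Hence the process continues to $\epsilon = 0$, yielding $u \geq 0$ throughout $\mathcal{R}_{y_0}$, which is the claimed barrier inequality.
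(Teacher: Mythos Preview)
Your algebraic reduction of (i) via Kummer's equation is clean and matches the paper's computation in a different organization. But the proof of (i) is not complete: you assert that ``the specific numerical values \dots\ are calibrated precisely so that this balancing succeeds,'' without carrying it out. The paper does carry it out, and the mechanism is worth knowing. One first bounds $h(y)\le h(y_0)=e^{-\pi/8\Lambda}{}_1F_1\bigl(\tfrac{1+\sqrt3\Lambda}{2};1;\tfrac{\pi}{4\Lambda}\bigr)$, which as a function of $\Lambda$ decreases to a limit of about $1.8$. Then on $\{\psi_{y_0}\ge0\}$ the constraint $\tfrac{10}{9}h\cos(\alpha x)\ge1$ forces $\cos(\alpha x)\ge\tfrac{9}{10\cdot1.8}=\tfrac12$, hence $|\tan(\alpha x)|\le\sqrt3$. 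Plugging this into your $\sin$-term (rewritten with a $\tan$) makes the coefficient exactly match the $\sqrt3$ in $\beta=\tfrac12(1+\sqrt3\Lambda)$, and the bracket vanishes identically. Your bound $|\sin(\alpha x)|\le\sqrt{1-81/(100h^2)}$ is equivalent, but you still owe the reader the $h\le1.8$ estimate and the check that it closes.

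There is a genuine gap in the barrier conclusion. You invoke the supersolution $\psi_\epsilon=\epsilon y^{-(\inf b_2-1)/2}$ (or $-\epsilon\log y$) from Proposition~\ref{PropUnspecifiability}, but that proposition assumes $c\le\tfrac14(\inf b_2-1)^2$, which is \emph{not} a hypothesis of the present lemma (only $c\ge0$ is). For instance when $\inf b_2=1$ one computes $L(-\epsilon\log y)=\epsilon(1-b_2)-c\epsilon\log y$, and the term $-c\epsilon\log y$ is \emph{positive} for $y<1$ and $c>0$, so $\psi_\epsilon$ is not an $L$-supersolution. The paper avoids this by perturbing on the other side: it replaces $\psi_{y_0}$ by $\psi_{y_0}+\epsilon\log(y/y_0)$ and checks directly that this remains an $L$-\emph{sub}solution on its own positivity set, since $L_0(\epsilon\log(y/y_0))=\epsilon(b_2-1)\ge0$ and the $c$-term now pairs with $\psi_{y_0}+\epsilon\log(y/y_0)\ge0$ itself. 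Because $\log(y/y_0)\to-\infty$ at $y=0$, the positivity set is compactly contained in $\{y>0\}$, and the interior comparison principle then gives $f\ge\psi_{y_0}+\epsilon\log(y/y_0)$; send $\epsilon\searrow0$. Your argument can be repaired in the same spirit by working throughout with the operator $L_0=L-c$ (noting $L_0(f)\le L(f)\le0$ since $cf\ge0$, and that your own computation already gives $L_0(\psi_{y_0})\ge0$ on $\{\psi_{y_0}\ge0\}$), for which $\psi_\epsilon$ \emph{is} a supersolution without any upper bound on $c$.
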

\noindent\begin{figure}[h!]
	\vspace{-0.5in}
	\hspace{-1.9in}
	\begin{minipage}[c]{0.7\textwidth}
		\caption{
			\it The barrier $\psi_{y_0}$ and its box $\mathcal{R}_{y_0}$.
			Along the line $\{y=y_0\}$ the barrier is approximately $2\cos(\frac{\pi}{8y_0\Lambda}x)-1$.
			At the origin we have $\psi_{y_0}(0,0)=\frac19$.
			\label{FigBarrierOrig}
		} 
	\end{minipage} \hspace{-0.5in}
	\begin{minipage}[c]{0.2\textwidth}
		\includegraphics[scale=0.5]{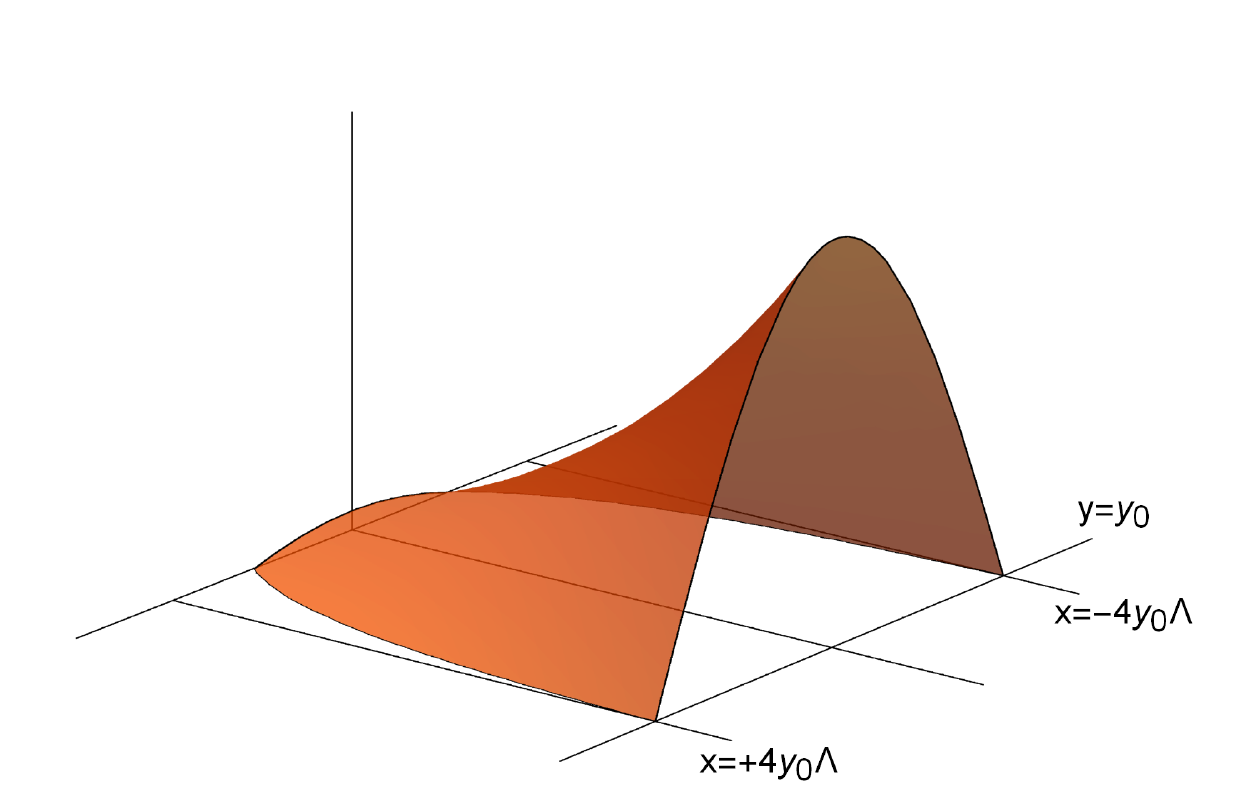}
	\end{minipage} \hfill
\end{figure}
\begin{proof}
	The claims ({\it{ii}})-({\it{v}}) require just elementary verification, perhaps with some electronic help.
	The two non-trivial claims are ({\it{i}}) that $L(\psi_{y_0})\ge0$, and the final claim that $\psi_{y_0}\le{}f$ on $\mathcal{R}_{y_0}$ whenever $\psi_{y_0}\le{}f$ on the line segment $\{y=y_0,\,x\in[-4y_0\Lambda,4y_0\Lambda]\}$.	
	
	To begin the verification of ({\it{i}}), we multiply $\psi_{y_0}$ by $\frac{9}{10}$ for convenience and consider $\frac{9}{10}\psi_{y_0}(x,y)=g(y)h(x)-\frac{9}{10}$ where
	\begin{eqnarray}
		\begin{aligned}
		&h(x)=\cos\left(\frac{\pi{}x}{8y_0\Lambda}\right) \\
		&g(y)=Exp\left(-\frac{\pi}{8y_0\Lambda}y\right)
		\,{}_1F_1\left(\frac{1+\sqrt{3}\Lambda}{2};\,1;\,\frac{\pi}{4y_0\Lambda}y\right).
		\end{aligned}
	\end{eqnarray}
	We have chosen $g(y)$ to satisfy the ODE
	\begin{eqnarray}
		y^2g_{yy}+yg_y-\left(
		\left(\frac{\pi{}y}{8y_0\Lambda}\right)^2
		+\left(\frac{\pi{}y}{8y_0}\right)\sqrt{3}
		 \right)g\;=\;0.
	\end{eqnarray}
	Evaluating $L(hg-9/10)$ we obtain
	\begin{eqnarray*}
		\begin{aligned}
			&L(\psi_{y_0})
			\;=\;y^2\left(g_{yy}h+gh_{xx}\right)
			+y(b_1gh_x+b_2g_yh)
			+c(gh-9/10) \\
			&\quad\;\ge\;\cos\left(\frac{\pi}{8y_0\Lambda}\right)
			\left[
			y^2g_{yy}+yg_y
			-\left(
			\frac{1}{\Lambda^2}\left(\frac{\pi{}y}{8y_0}\right)^2
			+\left(\frac{\pi{}y}{8y_0}\right)
			\tan\left(\frac{\pi{}x}{8y_0\Lambda}\right)
			\right)g
			\right]
		\end{aligned}
	\end{eqnarray*}
	on the domain where $h(x)g(y)-9/10\ge0$; we used $|b_1|<\Lambda$, $b_2>1$, and $c\ge0$.
	
	The function $\tan(\pi{}x/8y_0\Lambda)$ is unbounded if its domain is unrestricted.
	But on the restricted domain $h(x)g(y)-9/10\ge0$ we can bound it.
	We use the fact that $g$ is increasing on $[0,y_0]$ and $h(0)=1$ to obtain
	\begin{eqnarray}
		h(x)
		\;\ge\;\frac{9}{10}\frac{1}{g(y)}
		\;\ge\;\frac{9}{10}\frac{1}{g(y_0)}.
	\end{eqnarray}
	To estimate $g(y_0)$, one computes $g(y_0)=Exp(-\pi/8\Lambda){}_1F_1((1+\sqrt{3}\Lambda)/2;1;\pi/4\Lambda)$ is actually decreasing as a function of $\Lambda$, and decreases to a value of about $1.804$.
	Estimating
	\begin{eqnarray}
		h(x)
		\;\ge\;\frac{9}{10}\frac{1}{1.8}\;=\;0.5
	\end{eqnarray}
	we see that, on the region where $h(x)g(y)-9/10>0$, $y\in[0,y_0]$ then $h(x)=\cos(\pi{}x/8y_0\Lambda)\ge0.5$.
	This gives $|\tan(\pi{}x/8y_0\Lambda)|<\sqrt{3}$ which gives, on this region,
	\begin{eqnarray*}
		\begin{aligned}
			&L(\psi_{y_0})
			\;\ge\;\cos\left(\frac{\pi}{8y_0\Lambda}\right)
			\left[
			y^2g_{yy}+yg_y
			+\left(
			-\frac{1}{\Lambda^2}\left(\frac{\pi{}y}{8y_0}\right)^2
			-\left(\frac{\pi{}y}{8y_0}\right)\sqrt{3}
			\right)g
			\right].
		\end{aligned}
	\end{eqnarray*}
	We conclude that $L(\psi_{y_0})\ge0$ on the subset of $\mathcal{R}_{y_0}$ where $y\in[0,y_0]$ and $\psi_{y_0}\ge0$.
	
	To verify the final claim---that $\psi_{y_0}$ is a subfunction on $\mathcal{R}_{y_0}$ provided it is a subfunction on the segment $y=y_0$, $x\in[-y_0L,y_0L]$, we must show that, when $f\ge0$ solves $L(f)\le0$ on $\mathcal{R}_{y_0}$, then if we have $f(x,y_0)<\psi_{y_0}(x,y_0)$ on the segment $y=y_0$, $x\in[-y_0L,y_0L]$ we have $f<\psi_{y_0}$ on $\mathcal{R}_{y_0}$.
	
	To see this, for any $\epsilon>0$, consider
	\begin{eqnarray}
		\psi_{y_0}(x,y)\,+\,\epsilon{}\log(y/y_0).
	\end{eqnarray}
	This is also a subfunction for all those values where it is non-negative, for
	\begin{eqnarray}
		\begin{aligned}
		&L(\psi_{y_0}(x,y)\,+\,\epsilon{}\log(y/y_0)) \\
		&\;=\;y^2\triangle\psi_{y_0}+y\left(b_1\psi_{y_0,x}+b_2\psi_{y_0,y}\right) \\
		&\quad\quad
		+\epsilon\left(y^2(\partial_y)^2\log(y/y_0)
		+yb_1\partial_y\log(y/y_0)\right) \\
		&\quad\quad
		+c\left(\psi_{y_0}(x,y)\,+\,\epsilon{}\log(y/y_0)\right) \\
		&
		\;=\;b_1-1\;\ge0.
		\end{aligned}
	\end{eqnarray}
	The term $y^2\triangle\psi_{y_0}+y\left(b_1\psi_{y_0,x}+b_2\psi_{y_0,y}\right)$ is non-negative by all the work above.
	The term $\left(y^2(\partial_y)^2\log(y/y_0)
	+yb_1\partial_y\log(y/y_0)\right)$ equals $b_2-1$.
	The term $c\left(\psi_{y_0}(x,y)\,+\,\epsilon{}\log(y/y_0)\right)$ is non-negative by assumption.
	Therefore, as claimed,
	\begin{eqnarray}
		\begin{aligned}
		&L(\psi_{y_0}(x,y)\,+\,\epsilon{}\log(y/y_0))\;\ge\;b_2-1\;\ge0.
		\end{aligned}
	\end{eqnarray}
	Because $\log(y/y_0)\searrow-\infty$ near the boundary $\{y=0\}$, and because $L$ is uniformly elliptic away from $\{y=0\}$ the maximum principle applied in the interior of $\mathcal{R}_{y_0}$ guarantees that $\psi_{y_0}(x,y)+\epsilon\log(y/y_0)<f(x,y)$.
	Sending $\epsilon\searrow0$ gives the result.	
\end{proof}

\begin{theorem}[The Harnack inequality at the boundary; local version] \label{ThmBoundHarnackLocal}
	Assume $|b_1|,|b_2|,|c|<\Lambda$, and $b_2\ge1$, $c\ge0$.
	Assume $f\ge0$ satisfies $L(f)\le0$ and $L(f)\ge-\Lambda$ on the semi-closed rectangle
	\begin{eqnarray}
		\mathcal{R}_{y_0}\;=\;\left\{
		\,(x,y)\;\;\big|\;\;x\in[-4\Lambda{}y_0,4\Lambda{}y_0],\;y\in(0,y_0]
		\right\}.
	\end{eqnarray}
	
	Then $f(0,0)\;\ge\;\frac19\inf_{x\in[-4\Lambda{}y_0,4\Lambda{}y_0]}f(x,y_0)$.
\end{theorem}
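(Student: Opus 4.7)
The plan is to derive the theorem as a direct consequence of Lemma \ref{LemmaISubfunction}, whose explicit barrier $\psi_{y_0}$ was constructed precisely for this application: it has $\psi_{y_0}(0,0) = \tfrac{1}{9}$ by property (v), is bounded above by $1$ on the top edge by property (ii), is $\le 0$ on the lateral edges by property (iii), and, most importantly, the lemma's final clause says that as soon as $f$ dominates $\psi_{y_0}$ on the single edge $\{y=y_0\}$, it dominates $\psi_{y_0}$ on all of $\mathcal{R}_{y_0}$.

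Set $m = \inf_{x \in [-4\Lambda y_0, 4\Lambda y_0]} f(x,y_0)$. The case $m=0$ is trivial since $f \ge 0$. Assuming $m>0$, consider $\tilde f = f/m$, which satisfies $\tilde f \ge 0$ and $L(\tilde f) \le 0$ by linearity. On the top edge, $\tilde f(x,y_0) \ge 1 \ge \psi_{y_0}(x,y_0)$. Applying the final clause of Lemma \ref{LemmaISubfunction}, we get $\tilde f \ge \psi_{y_0}$ throughout $\mathcal{R}_{y_0}$, i.e. $f \ge m\,\psi_{y_0}$.

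Letting interior points $(x,y)$ approach $(0,0)$ and using continuity of $\psi_{y_0}$ at the corner, this gives $\liminf_{(x,y)\to(0,0)} f(x,y) \ge m\,\psi_{y_0}(0,0) = \tfrac{m}{9}$, which is exactly $f(0,0) \ge \tfrac{m}{9}$ in the sense of Definition \ref{DefSolsAtDegBound}.

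The real difficulty has already been paid for inside Lemma \ref{LemmaISubfunction}: the confluent-hypergeometric ansatz is what accommodates arbitrary measurable drift with $b_2 \ge 1$ and $c \ge 0$, and the $\epsilon\log(y/y_0)$ perturbation in the final clause is what eliminates any need for a comparison on the degenerate edge $\{y=0\}$. Given that lemma, the present theorem is essentially a scaling remark; the only subtlety worth flagging is that the hypothesis $L(f) \ge -\Lambda$ is not used by this particular argument — it is present in the statement to ensure $f$ is well-behaved near the degenerate boundary in the ambient framework of Definition \ref{DefSolsAtDegBound} and the localized gradient estimate of Proposition \ref{PropLocDomVer}.
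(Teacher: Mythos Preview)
Your proof is correct and follows essentially the same route as the paper's own proof, which also simply scales the barrier $\psi_{y_0}$ by the infimum $m$ and invokes the final clause of Lemma \ref{LemmaISubfunction}. Your version is in fact slightly more careful than the paper's one-line argument: you handle the $m=0$ case explicitly and pass to the liminf at $(0,0)$ rather than silently treating it as an interior point.
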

\begin{proof}
	This is immediate from Lemma \ref{LemmaISubfunction}, after noticing that 
	\begin{eqnarray}
	\psi_{y_0}(x,y_0)\,\cdot\,
	\inf{}_{x'\in[-4\Lambda{}y_0,4\Lambda{}y_0]}f(x',y_0)
	\;\le\;f(x,y_0)
	\end{eqnarray}
	for all $x\in[-4\Lambda{}y_0,4\Lambda{}y_0]$.
\end{proof}

\begin{theorem}[The Harnack inequality; global version] \label{ThmBoundHarnackGlobal}
	There exists a number $\delta=\delta(\Lambda)>0$ so that the following holds.
	Assume $|b_1|,|b_2|,|c|<\Lambda$, and $b_2\ge1$, $c\ge0$, and assume $f\ge0$ satisifies $L(f)\le0$ and $L(f)\ge-\Lambda$ on the open half-plane.
	
	Then $f(x_0,y')\;\ge\;\delta\cdot{}f(x_0,y)$ whenever $(x_0,y_0)$ is a point in the open half-plane and $y'\in[0,y]$.
\end{theorem}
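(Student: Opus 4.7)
The plan is to imitate the proof of the local Harnack (Theorem~\ref{ThmBoundHarnackLocal}), reusing the barrier $\psi_{y_0}$ of Lemma~\ref{LemmaISubfunction}, but evaluating the scaled barrier not just at the corner $(x_0,0)$ but along the whole vertical segment $\{x = x_0\}$. After a horizontal translation, assume $x_0 = 0$, and take $y_0 := y$ so we work in the rectangle $\mathcal{R}_y$ attached to the degenerate boundary at $(0,0)$.

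First I would control the oscillation of $f$ along the top edge $\{y'' = y,\ |x|\le 4\Lambda y\}$ using the interior gradient estimate (Proposition~\ref{PropInteriorInitial}): integrating $|\partial_x\log f|\le D/y$ produces $\inf_{|x|\le 4\Lambda y} f(x,y) \ge e^{-4\Lambda D} f(0,y)$. Setting $\lambda := e^{-4\Lambda D} f(0,y)$, the scaled barrier $\lambda\psi_y$ remains a subsolution wherever $\psi_y\ge 0$ (since $L$ is linear and $\lambda>0$), is bounded above by $\lambda\le f$ on the top edge via item (ii) of Lemma~\ref{LemmaISubfunction}, is $\le 0\le f$ on the two side edges via item (iii), and is $\le f$ trivially wherever $\psi_y<0$. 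The logarithmic-correction argument at the end of the proof of Lemma~\ref{LemmaISubfunction}, applied to $\lambda\psi_y + \varepsilon\log(y''/y)$ and then sent to zero, adapts verbatim and yields $\lambda\psi_y \le f$ throughout the closed rectangle $\mathcal{R}_y$.

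Evaluating this inequality at the point $(0, y')$ reduces the theorem to the pointwise claim $\psi_y(0, y') \ge \tfrac{1}{9}$ for every $y' \in [0, y]$; once this is known, $f(0, y') \ge \lambda/9 = (e^{-4\Lambda D}/9) f(0, y)$ and one takes $\delta := e^{-4\Lambda D}/9$. This pointwise lower bound is what I expect to be the main obstacle. Writing $\psi_y(0, y'') = \tfrac{10}{9} g(y'') - 1$ with $g(y'') = e^{-\pi y''/(8y\Lambda)}\,{}_1F_1\bigl(\tfrac{1+\sqrt{3}\Lambda}{2};\, 1;\, \tfrac{\pi y''}{4y\Lambda}\bigr)$, the claim is equivalent to $g(y'') \ge g(0) = 1$ on $[0, y]$. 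I would extract this monotonicity from the ODE for $g$ already recorded in the proof of Lemma~\ref{LemmaISubfunction}, namely $(y'')^2 g'' + y'' g' = P(y'')\, g$ with $P\ge 0$: in the logarithmic coordinate $u = \log y''$ it reads $g_{uu} = P(e^u)\, g$, so $g$ is convex in $u$ wherever it is nonnegative. Combined with the boundary behavior $g\to 1$ and $g_u = y'' g_{y''}\to 0$ as $u\to -\infty$, and the power-series computation $g_{y''}(0^+) = \sqrt{3}\pi/(8y) > 0$, convexity forces $g_u\ge 0$ throughout $(-\infty, \log y]$, so $g$ is monotonically nondecreasing on $[0, y]$ and stays $\ge 1$, which is precisely what is needed.
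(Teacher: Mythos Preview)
Your proof is correct and follows the paper's route: shift to $x_0=0$, use the interior gradient estimate (\ref{IneqGlobalXGrowth}) to bound $\inf_{|x|\le 4\Lambda y} f(x,y)\ge e^{-4D\Lambda}f(0,y)$, and then slide the scaled barrier of Lemma~\ref{LemmaISubfunction} underneath. The paper's own proof is terser---it simply cites Theorem~\ref{ThmBoundHarnackLocal} after recording the infimum bound---which, as stated, only delivers the value at $y'=0$; you correctly recognise that covering all $y'\in[0,y]$ requires $\psi_y(0,y')\ge\tfrac19$ along the whole vertical segment, i.e., the monotonicity $g\ge g(0)=1$. The paper asserts exactly this monotonicity without proof inside the argument for Lemma~\ref{LemmaISubfunction} (``We use the fact that $g$ is increasing on $[0,y_0]$''); your convexity argument in the logarithmic variable $u=\log y''$, reading the ODE as $g_{uu}=P(e^u)g\ge0$ with $g_u\to0$ at $u=-\infty$, is a clean way to supply that missing justification.
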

\begin{proof}
	After shifting the $x$-coordinate if necessary, we may assume that $x_0=0$.
	From the inequality (\ref{IneqGlobalXGrowth}) we have the exponential bounds $f(x,y_0)\ge{}f(0,y)e^{-D|x|/y_0}$ where $D=D(\Lambda)$.
	In particular along the segment $\{y=y_0,x\in[-4y_0\Lambda,4y_0\Lambda]\}$ we have $f(x,y_0)>f(0,y_0)e^{-D\cdot4\Lambda}$.
	Now we may apply Theorem \ref{ThmBoundHarnackLocal}, by using
	\begin{eqnarray}
		\inf{}_{x\in[-4y_0\Lambda,4y_0\Lambda]}f(x,y_0)
		\;\ge\;e^{-D\cdot4\Lambda}f(0,y_0).
	\end{eqnarray}
\end{proof}

\noindent\begin{figure}[h!] 
	\centering
	\noindent\begin{subfigure}[b]{0.4\textwidth} 
		\includegraphics[scale=0.5]{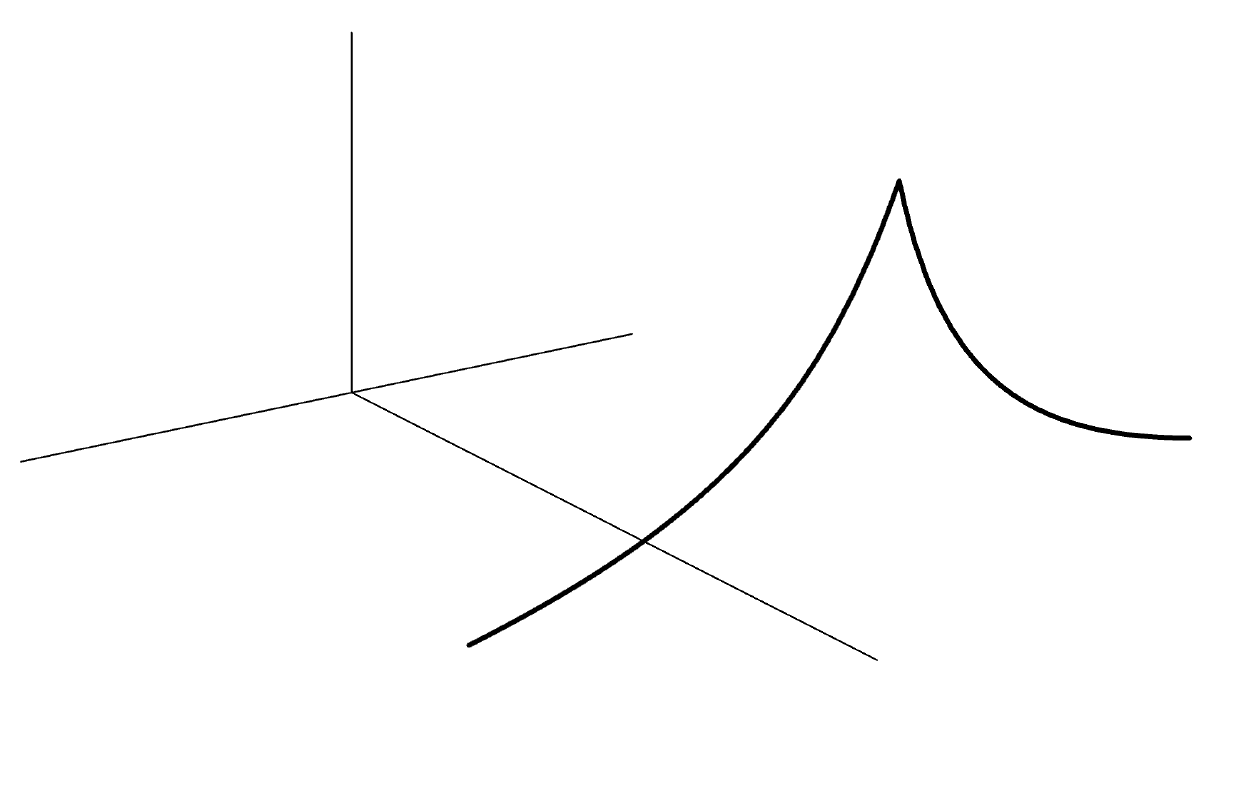}
		\caption{Curve depicting the exponential lower bounds on a solution $f$, due to Proposition \ref{PropInteriorInitial}, along $\{y=y_0\}$. \\
			\\
		\label{SubFigBarrierLines}}
	\end{subfigure}
	\hspace{0.5in}
	\begin{subfigure}[b]{0.4\textwidth} 
		\includegraphics[scale=0.5]{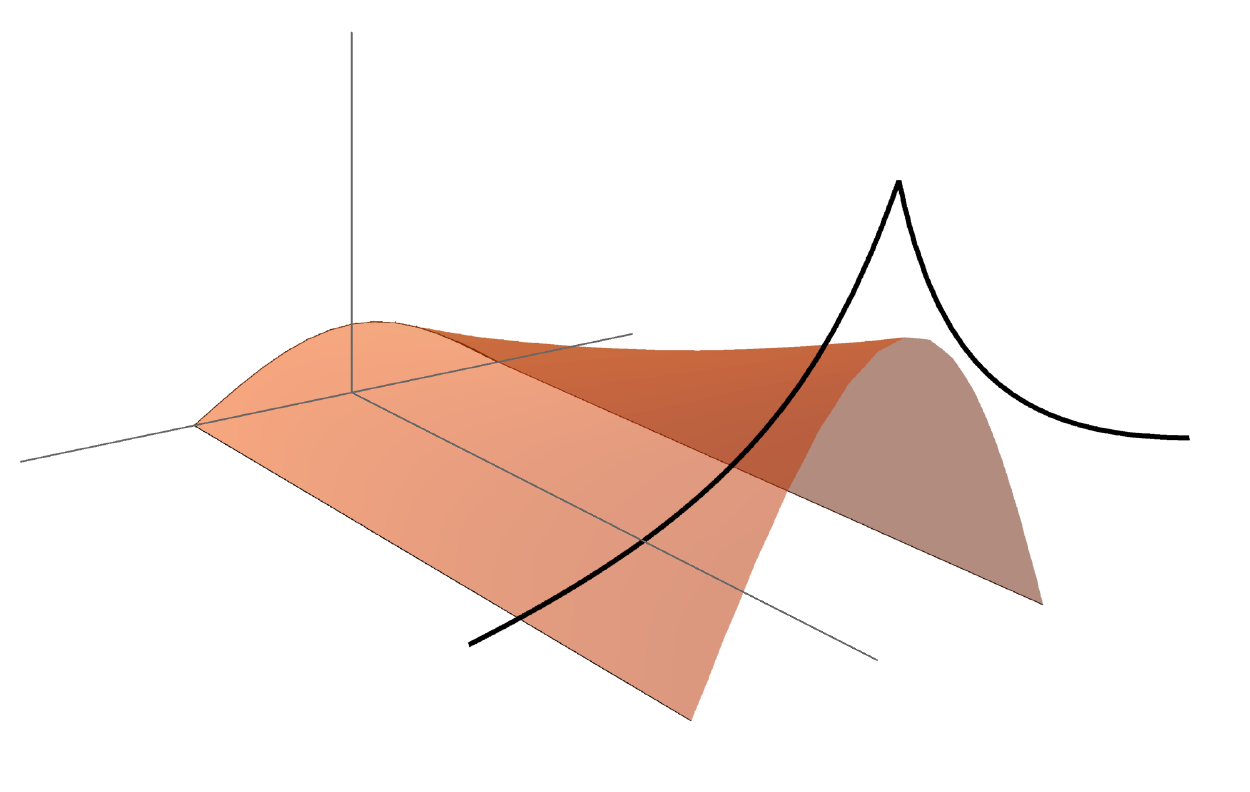}
		\caption{Schematic of the barrier of Lemma \ref{LemmaISubfunction} as it is used in Theorem \ref{ThmBoundHarnackGlobal}.
			It fits beneath the exponential lower bounds at $\{y=y_0\}$, forcing new lower bounds at $\{y=0\}$.
			\label{SubFigBarrierWithLines}}
	\end{subfigure}
	\caption{\it The barrier of Lemma \ref{LemmaISubfunction} and its use in Theorem \ref{ThmBoundHarnackGlobal}. }\label{FigSubFunction}
\end{figure}

\subsection{Continuity at the degenerate boundary, and the Maximum principle.}

The two results of this subsection constrain the behavior of solutions $L(f)=0$ at opposite ends: at $y=\infty$ we prove the function $x\mapsto{}\lim_{y\rightarrow\infty}f(x,y)$ is well-defined and is actually a constant, and at $y=0$ we prove the function $x\mapsto{}f(x,0)$ is well-defined and is actually continuous.
But both results follow from the judicious use of the subfunction $\psi_{y_0}$, developed in Lemma \ref{LemmaISubfunction} and depicted in Figure \ref{FigSubFunction}.

\begin{theorem}[Continuity at the degenerate boundary] \label{ThmContinuity}
	Assume $|b_1|,|b_2|,|c|<\Lambda$, and $b_2\ge1$, $c\ge0$.
	Let $p\in\{y=0\}$ and let $\Omega$ be a neighborhood of $p$ (see Definition \ref{DefOpens}).
	If $L(f)=0$ on $\Omega$ (Definition \ref{DefSolsAtDegBound}), then $f$ is continuous at $p$.
	
	In particular if $f\ge0$ solves $L(f)=0$ on the closed half-plane, then in fact $f\in{}C^0(\overline{H}{}^2)\cap{}C^{1,\alpha}(H^2)$.	
\end{theorem}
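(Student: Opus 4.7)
The plan is to combine the localized gradient estimate with the boundary Harnack inequality via a scaling/blow-up argument exploiting the scale-invariance of $L$, in the spirit of the proof of Proposition \ref{PropInteriorInitial}. After translating so that $p=(0,0)$, Proposition \ref{PropLocDomVer} furnishes a subneighborhood $\Omega'\subset\Omega$ on which $y|\nabla\log f|\le D$, while Definition \ref{DefSolsAtDegBound} makes $A:=\liminf_{q\to p}f(q)$ and $B:=\limsup_{q\to p}f(q)$ both finite and non-negative. Continuity at $p$ reduces to the identity $A=B$; the uniformity in $q$ comes free from the gradient bound together with the same argument applied at neighboring boundary points.

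Argue by contradiction: assume $A<B$ and fix sequences $p_i=(x_i,y_i)\to p$ with $f(p_i)\to B$, and $q_i=(x'_i,y'_i)\to p$ with $f(q_i)\to A$. Set $\lambda_i:=\max\{|x_i|,\,y_i,\,|x'_i|,\,y'_i\}\to 0$, pass to rescaled coordinates $(\tilde x,\tilde y)=(x/\lambda_i,\,y/\lambda_i)$, and define $\tilde f_i(\tilde x,\tilde y):=f(\lambda_i\tilde x,\lambda_i\tilde y)$. Scale invariance preserves both $L$ and the gradient bound: $\tilde f_i$ solves an equation with coefficients still satisfying $|\tilde b_1|,|\tilde b_2|,|\tilde c|\le\Lambda$, $\tilde b_2\ge 1$, $\tilde c\ge 0$, and $\tilde y\,|\nabla\log\tilde f_i|\le D$, on rescaled domains $\Omega'/\lambda_i$ that exhaust the closed upper half-plane. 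By construction the rescaled base points $\tilde p_i,\tilde q_i$ lie on the unit coordinate sphere, so a subsequence converges to limits $\tilde p_\infty,\tilde q_\infty$ at unit scale. Standard elliptic compactness, as in the proof of Proposition \ref{PropInteriorInitial}, then yields a subsequential limit $\tilde f_\infty\ge 0$ solving a limiting equation $\tilde L_\infty\tilde f_\infty=0$ locally uniformly on $\{\tilde y>0\}$.

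The contradiction is obtained by forcing $\tilde f_\infty$ to collapse to a single value on the unit coordinate sphere, even though the two sequences encode distinct values $A<B$ there. The Global Harnack (Theorem \ref{ThmBoundHarnackGlobal}) applied to $\tilde f_\infty$ along vertical rays bounds $\tilde f_\infty$ between uniform multiples of $\inf\tilde f_\infty$, and the gradient estimate bounds $\tilde f_\infty$ between uniform multiples of itself across horizontal segments of hyperbolic length $O(1)$. Deploying the barrier $\psi_{\tilde y_0}$ from Lemma \ref{LemmaISubfunction} at successively finer scales approaching $\{\tilde y=0\}$ gives, through iteration of the same scheme that proves Theorem \ref{ThmBoundHarnackLocal}, that the oscillation of $\tilde f_\infty$ throughout the half-plane is strictly less than $B-A$. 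This contradicts $\tilde f_\infty(\tilde p_\infty)=B$ and $\tilde f_\infty(\tilde q_\infty)=A$.

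The principal obstacle is the boundary case, namely when $\tilde p_\infty$ or $\tilde q_\infty$ happens to lie on $\{\tilde y=0\}$: there the gradient bound degenerates and the compactness does not automatically extend to the boundary of the limiting half-plane. The resolution is a second-generation blow-up centered at the offending boundary point, exploiting the uniformity of the localized gradient estimate noted in the Remark following Proposition \ref{PropLocDomVer}: the constant $D$ persists under renewed scaling, so the scheme closes after finitely many iterations, with any terminal configuration reducible either to the classical Liouville step of Proposition \ref{PropInteriorInitial} or to a direct squeeze by $\psi_{\tilde y_0}$. The ``in particular'' sentence then follows immediately: interior $C^{1,\alpha}$ regularity is classical, and $C^0$ regularity on $\overline{H}{}^2$ is the statement just proved, applied uniformly at every boundary point.
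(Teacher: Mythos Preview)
Your proposal has genuine gaps at the two decisive steps.

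\textbf{The oscillation bound is asserted, not proved.} You claim that the Global Harnack inequality, the gradient estimate, and iterated barriers $\psi_{\tilde y_0}$ force the oscillation of $\tilde f_\infty$ on the half-plane to be strictly less than $B-A$. But all of the tools you cite are \emph{multiplicative}: Theorem \ref{ThmBoundHarnackGlobal} says $f(x_0,y')\ge\delta f(x_0,y)$, the gradient bound says $f(p)/f(q)\in[e^{-D},e^D]$ across unit hyperbolic distance, and the barrier $\psi_{y_0}$ gives $f(0,0)\ge\tfrac19\inf f(\cdot,y_0)$. None of these produces an additive oscillation bound on $\tilde f_\infty$, and you do not indicate any normalization that would convert multiplicative control into an oscillation strictly smaller than $B-A$. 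Without that conversion there is no contradiction.

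\textbf{The boundary case is circular.} You correctly identify that the hard case is $\tilde p_\infty$ or $\tilde q_\infty$ landing on $\{\tilde y=0\}$, where interior $C^{1,\alpha}$ compactness gives you nothing and you cannot assert $\tilde f_\infty(\tilde p_\infty)=B$. Your proposed fix, a ``second-generation blow-up'' that ``closes after finitely many iterations,'' has no termination mechanism: nothing prevents every re-blow-up from again pushing the base point to the boundary. Evaluating the limit at a boundary point would require exactly the continuity you are trying to prove.

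The paper's argument avoids both issues by never passing to a limit. It works at a \emph{single fixed scale}: after an affine renormalization $\tilde f=(f-l)/(u-l)+\epsilon$ one has $0\le\tilde f\le 1+2\epsilon$ on a box $\mathcal R_1$, with $\liminf_{(0,0)}\tilde f=\epsilon$ and (after possibly replacing $\tilde f$ by $1+2\epsilon-\tilde f$) $\tilde f(0,1)>\tfrac12$. The localized gradient estimate then gives $\tilde f(x,1)\ge\tfrac12 e^{-D|x|}$ along the top edge, so a multiple $C_1\psi_1$ of the barrier from Lemma \ref{LemmaISubfunction} slides underneath, forcing $\epsilon=\liminf_{(0,0)}\tilde f\ge C_1\psi_1(0,0)=\tfrac{1}{18}e^{-4D\Lambda}$, which contradicts the choice $\epsilon=\tfrac{1}{36}e^{-4D\Lambda}$. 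The affine renormalization is exactly the missing step that turns the multiplicative barrier estimate into the needed additive contradiction, and working at a fixed scale eliminates any need to evaluate a limit function at a degenerate boundary point.
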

{\bf Remark}. As laid out in Definitions \ref{DefOpens} and \ref{DefSolsAtDegBound}, the only assumption on the solution $f$ is local boundedness.
Continuity fails if this assumption is weakened at even a single boundary point, as Example \ref{ExImps} demonstrates.
\begin{proof}
	Translating in the $x$-direction if necessary, we may assume $p=(0,0)$.
	For an argument by contradiction, after setting
	\begin{eqnarray}
		l\;=\;\liminf_{(x,y)\rightarrow(0,0)}f(x,y)
		\quad \text{and} \quad
		u\;=\;\limsup_{(x,y)\rightarrow(0,0)}f(x,y),
	\end{eqnarray}
	we assume that $l<u$.
	By hypothesis $u$ and $l$ are finite.
	
	Let us select a small constant $\epsilon>0$; below we shall see that $\epsilon=e^{-4D\Lambda}/36$ shall be sufficient, where $D=D(\Lambda)$ is the constant from Proposition \ref{PropLocDomVer}.
	Then choose $y_0$ so that on the closed rectangle $\mathcal{R}_{y_0}$ (see Lemma \ref{LemmaISubfunction}) we have $l-\epsilon(u-l)<f<u+\epsilon(u-l)$.
	
	Next we rescale both the coordinate system and the function $f$.
	For the coordinate system, rescale both $x$ and $y$ by a factor of $y_0$, so that the box under consideration is now just
	\begin{eqnarray}
		\mathcal{R}_1\;=\;\big\{y\in[0,1],\;x\in[-4\Lambda,4\Lambda]\big\}.
	\end{eqnarray}
	Change $f(x,y)$ to $\tilde{f}(x,y)$ where
	\begin{eqnarray}
		\tilde{f}(x,y)\;=\;\frac{1}{u-l}\left(f(x,y)-l\right)\,+\,\epsilon
	\end{eqnarray}
	and so that the former condition $l-\epsilon(u-l)<f<u+\epsilon(u-l)$ on $\mathcal{R}_{y_0}$ is now
	\begin{eqnarray}
		\begin{aligned}
			&0\;\le\;\tilde{f}(x,y)\;\le\;1+2\epsilon 
			\quad \text{on} \quad \mathcal{R}_1,
		\end{aligned} \label{IneqsTildeFProperties1}
	\end{eqnarray}
	and also
	\begin{eqnarray}
		\begin{aligned}
		\liminf_{(x,y)\rightarrow(0,0)}\tilde{f}(x,y)=\epsilon,
		\quad \text{and} \quad
		\limsup_{(x,y)\rightarrow(0,0)}\tilde{f}(x,y)=1+\epsilon.
		\end{aligned} \label{IneqsTildeFProperties2}
	\end{eqnarray}
	\noindent\begin{figure}[h!] 
		\centering
		\noindent\begin{subfigure}[b]{0.4\textwidth} 
			\includegraphics[scale=0.5]{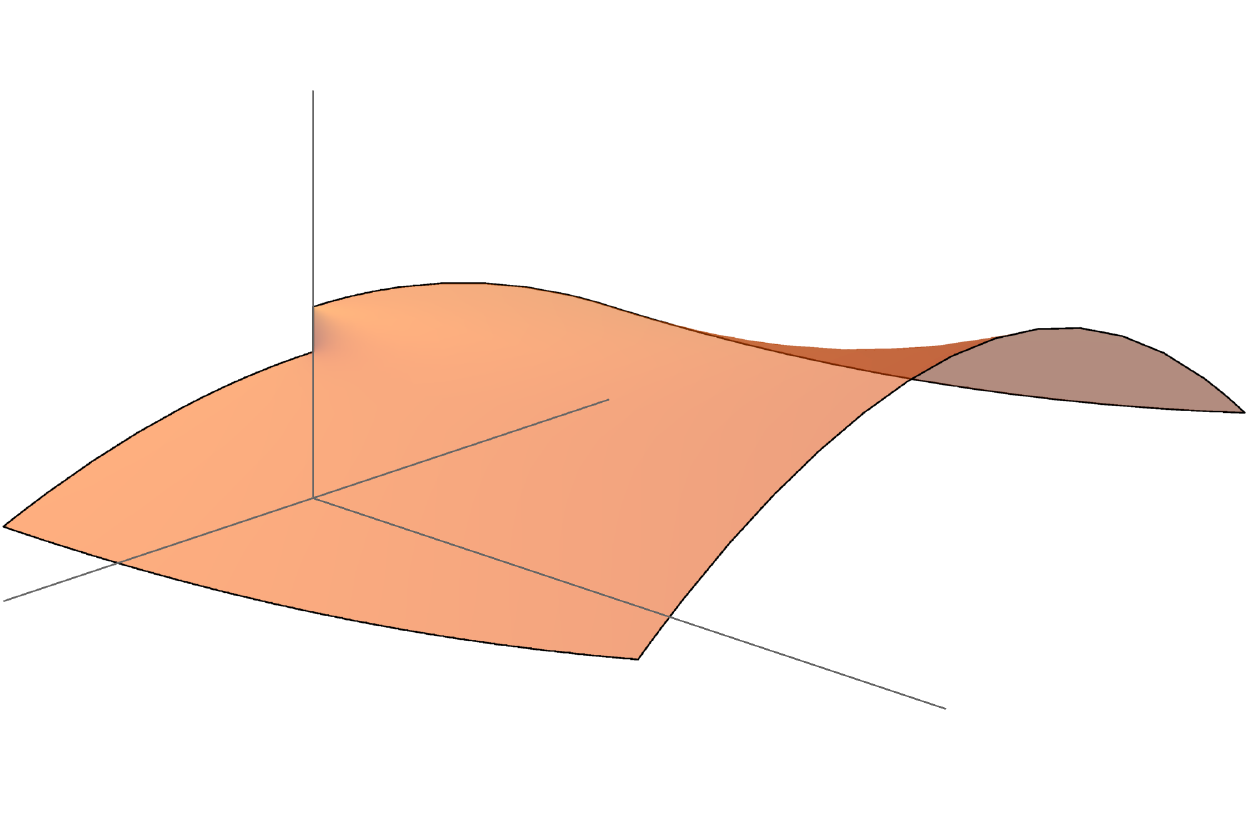}
			\caption{A function with a discontinuity at $(0,0)$, meaning $\liminf{}f<\limsup{}f$ at $(0,0)$. \\
				\\
			}\label{SubFigOrigFun}
		\end{subfigure}
		\hspace{0.5in}
		\begin{subfigure}[b]{0.4\textwidth}
			\includegraphics[scale=0.5]{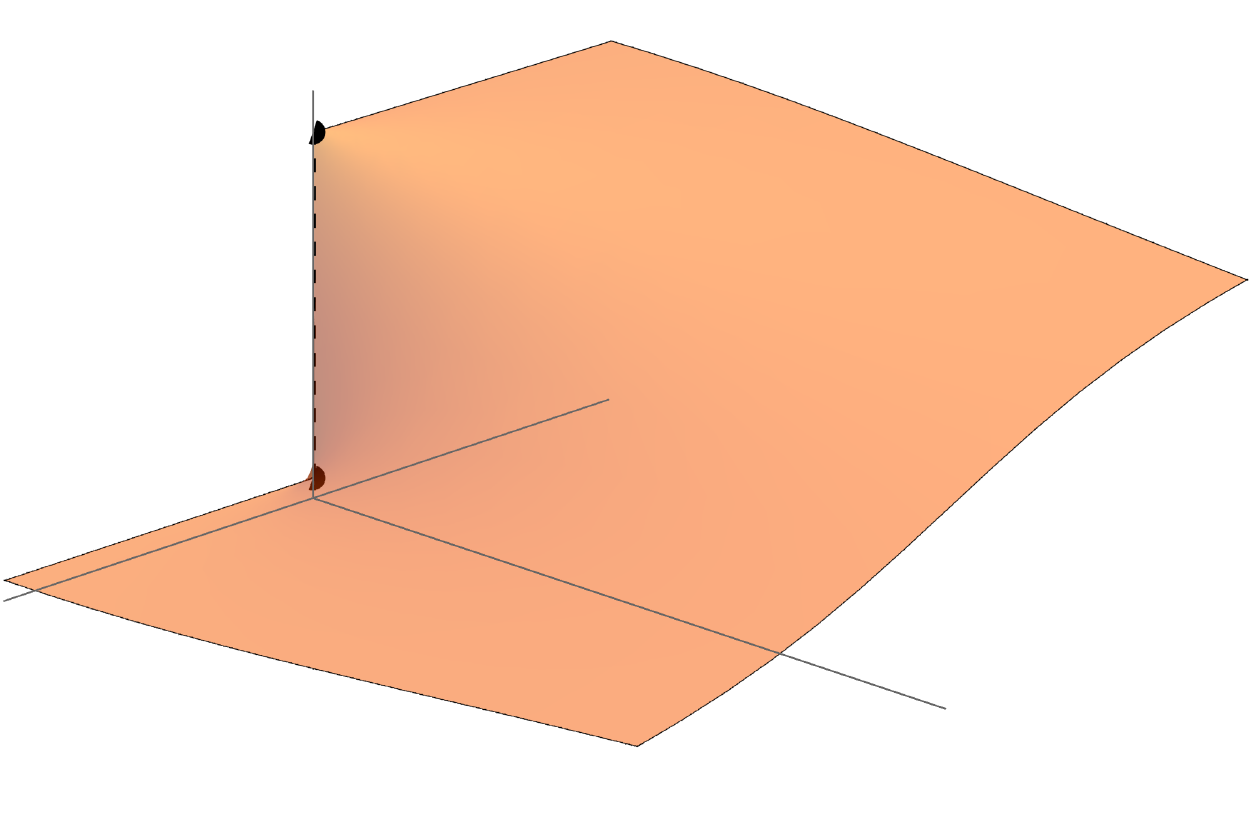}
			\caption{
				The situation after (\ref{IneqsTildeFProperties2}).
				The discontinuity is now greatly exaggerated, with $\liminf\tilde{f}=\epsilon$ and with $\limsup{}\tilde{f}=1+\epsilon$ at $(0,0)$, while also $0\le\tilde{f}\le{}1+2\epsilon$ on $\mathcal{R}_1$
			} \label{SubFigPts}
		\end{subfigure}
		\caption{\it Examining a Function with a discontinuity at the boundary}\label{FigContinuity1}
	\end{figure}
	
	It is necessary to make two further provisions.
	Passing to a smaller rectangle $\mathcal{R}_{y_0}$ if necessary---and then again rescaling the coordinates so we remain on $\mathcal{R}_1$---we may assume $y|\nabla\log\tilde{f}|<D$.
	That this is possible is due to the local version of the gradient estimate, Proposition \ref{PropLocDomVer}.
	
	The second provision is that $\tilde{f}(0,1)>1/2$.
	If on the contrary $\tilde{f}(0,1)\le{}1/2$ then simply replace $\tilde{f}(x,y)$ by $1+2\epsilon-\tilde{f}(x,y)$ to obtain $\tilde{f}(0,1)>1/2$.
	Clearly this replacement allows us to retain (\ref{IneqsTildeFProperties1}) and (\ref{IneqsTildeFProperties2}) as well as the gradient estimate.
	
	\noindent\begin{figure}[h!] 
		\centering
		\noindent\begin{subfigure}{0.4\textwidth}
			\includegraphics[scale=0.5]{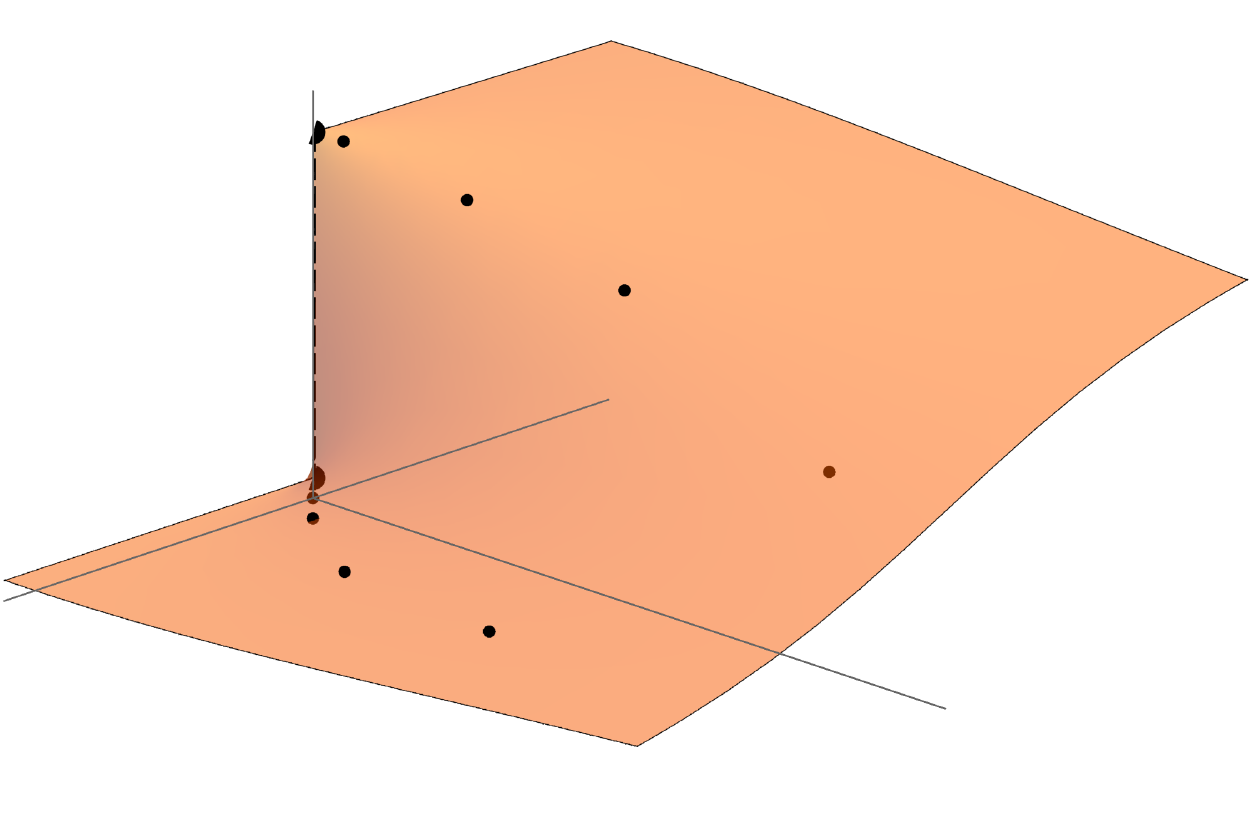}
			\caption{At least one sequence $\{p_i\}\subset\mathcal{R}_1$ has $p_i\rightarrow0$ and $\tilde{f}(p_i)\rightarrow{1}+\epsilon$ and another has $q_i\rightarrow0$ and $\tilde{f}(q_i)\rightarrow{\epsilon}$. \\
			}\label{SubFigPointsConverging}
		\end{subfigure}
		\hspace{0.5in}
		\begin{subfigure}{0.4\textwidth}
			\includegraphics[scale=0.5]{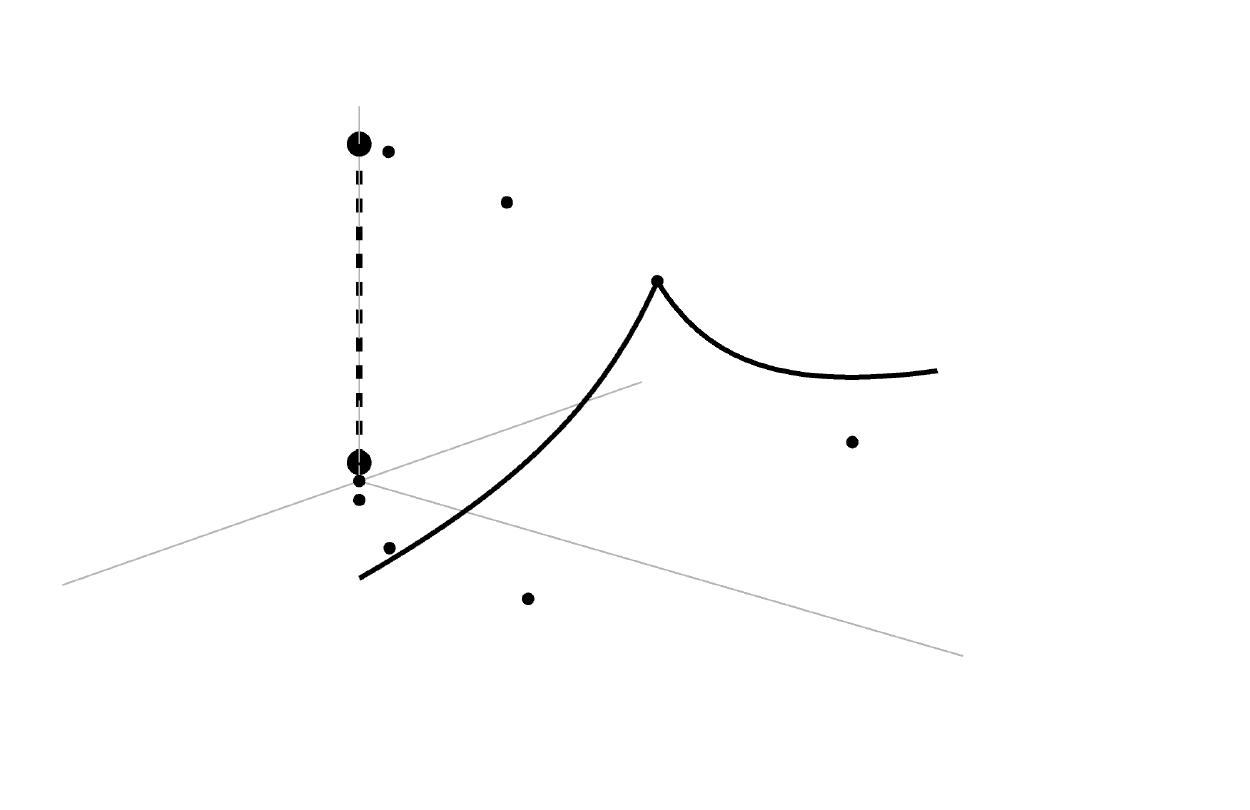}
			\caption{
				We arrange it so $\tilde{f}\ge{}1/2$ at $(0,1)$.
				Then the interior gradient estimate gives lower bounds for $\tilde{f}$ in the $x$-direction, as given by (\ref{IneqContLowerGradBounds}).
			} \label{SubFigExpLower}
		\end{subfigure}
		\caption{\it Analysis of the discontinuity point.} \label{FigContinuity2}
	\end{figure}
	
	This construction work now finished, we are able to use the lower barrier of Lemma \ref{LemmaISubfunction} to draw a contradiction.
	As stated in that lemma, we have $\tilde{f}(x,y)>C_1\psi_1(x,y)$ at all points where $\psi_1>0$, provided $\tilde{f}(x,1)>C_1\psi_1(x,1)$ on the segment $\{y=1,x\in[-4\Lambda,4\Lambda]\}$ (and $C_1$ is a constant of our choosing).
	Due to the gradient estimate $y|\nabla\log{}\tilde{f}|<D$, we have
	\begin{eqnarray}
		\begin{aligned}
		\tilde{f}(x,1)
		&\;\ge\;\tilde{f}(0,1)e^{-D|x|}
		\;>\;\frac{1}{2}e^{-D|x|}.
		\end{aligned} \label{IneqContLowerGradBounds}
	\end{eqnarray}
	Now we place a lower barrier $C_1\psi_1$ underneath the bound (\ref{IneqContLowerGradBounds}), where $C_1$ is a constant and $\psi_1$ is the function from Lemma \ref{LemmaISubfunction}; see Figure \ref{FigContinuity3}.
	Choosing $C_1=\frac12e^{-4D\Lambda}$, on the line segment $\{y=1,\,-4\Lambda\le{}x\le{}4\Lambda\}$ we have
	\begin{eqnarray}
		\begin{aligned}
			f(x,1)-C_1\psi_1(x,1)
			&\;\ge\;\frac{1}{2}e^{-D|x|}\,-\,\frac12e^{-4D\Lambda}\left(2\cos\left(\frac{\pi}{8\Lambda}x\right) -1 \right) \\
			&\;\ge\;\frac{1}{2}e^{-4D\Lambda}\,-\,\frac12e^{-4D\Lambda}\;=\;0.
		\end{aligned}
	\end{eqnarray}
	By Lemma \ref{LemmaISubfunction}, therefore $f(x,y)-C_1\psi(x,y)\ge0$ for all $(x,y)$ where $\psi(x,y)>0$ and $y<1$.
	Therefore we must have
	\begin{eqnarray}
		\begin{aligned}
			\epsilon
			\;=\;\liminf_{p\rightarrow(0,0)}f(p)
			\;\ge\;C_1\psi_1(0,0)
			\;=\;\frac{1}{18}e^{-4D\Lambda},
		\end{aligned}
	\end{eqnarray}
	where we used $\psi_1(0,0)=\frac19$.
	This contradicts $\epsilon=e^{-4D\Lambda}/36$.
	\begin{figure}
		\hspace{-2in}
		\begin{minipage}[c]{0.7\textwidth}
			\caption{
				\it A lower barrier $\psi_1$ from Lemma \ref{LemmaISubfunction} now fits beneath the exponential bounds at $\{y=1\}$.
				Near $(0,0)$ this barrier forces $\tilde{f}$ to be strictly larger than $\liminf_{p\rightarrow(0,0)}\tilde{f}$, giving the sought-for contradiction. \label{FigContinuity3}
			} 
		\end{minipage} \hspace{-0.5in}
		\begin{minipage}[c]{0.2\textwidth}
			\includegraphics[scale=0.6]{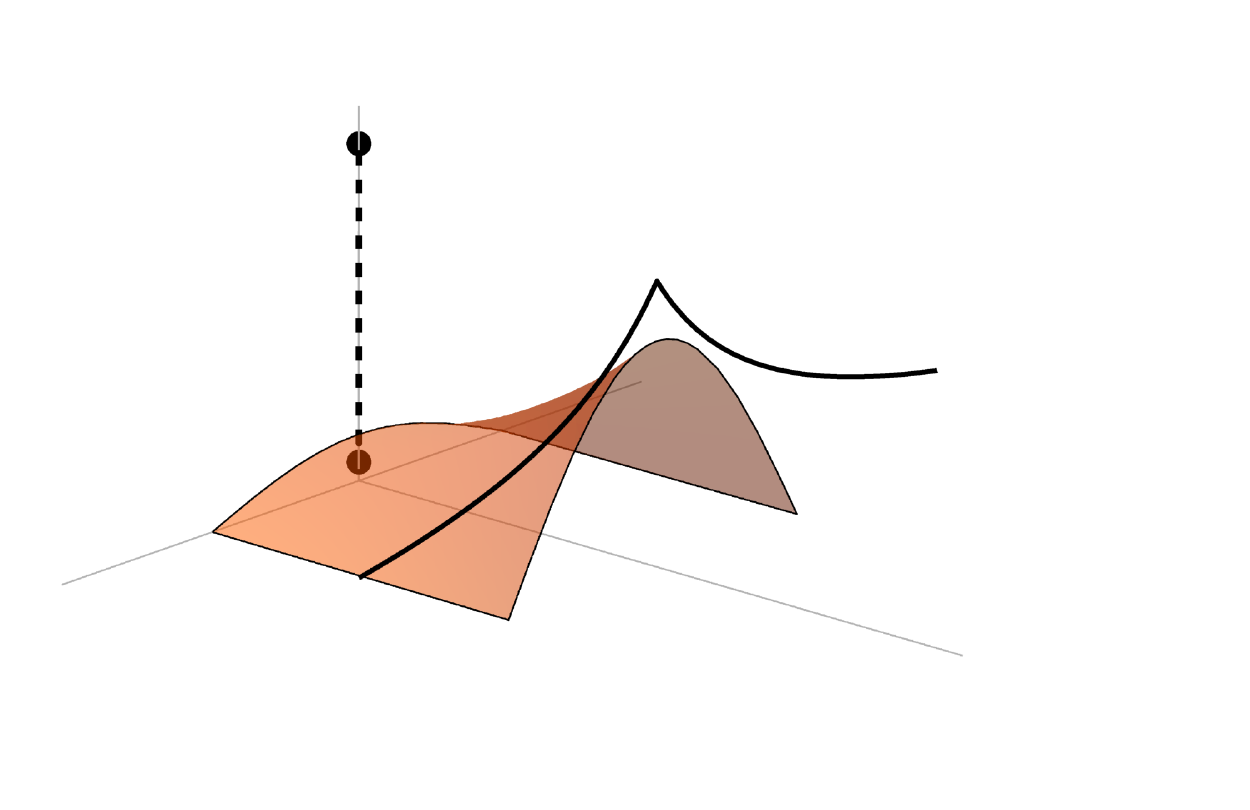}
		\end{minipage} \hfill
	\end{figure}
\end{proof}

\begin{theorem}[The Weak Maximum Principle at $\{y=0\}$] \label{ThmWeakMax}
	Assume $p\in\{y=0\}$ and $\Omega$ is some neighborhood of $p$.
	Assume $|b_1|,|b_2|,|c|\le\Lambda$, $b_2\ge1$ and $c\ge0$.
	
	If $f$ solves $L(f)\le0$ on $\Omega$ (see Definition \ref{DefSolsAtDegBound}), then $f(p)$ is not a strict local minimum on $\Omega$.
	If $f$ solves $L(f)\ge0$ on $\Omega$ (see Definition \ref{DefSolsAtDegBound}), then $f(p)$ is not a strict local maximum on $\Omega$.
\end{theorem}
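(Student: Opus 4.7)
The plan is to argue by contradiction, reducing to the boundary Harnack inequality (Theorem \ref{ThmBoundHarnackLocal}) applied to a shifted function that vanishes at $p$. The supersolution half of the statement is dual to the subsolution half under $f\mapsto -f$, so I will concentrate on the claim that $f(p)$ cannot be a strict local minimum when $L(f)\le 0$.

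First I would translate $x$ so that $p=(0,0)$ and set $m:=f(p)$. Assuming for contradiction that $f(p)$ is a strict local minimum, one chooses $y_0>0$ small enough that the closed rectangle $\mathcal{R}_{y_0}$ from Theorem \ref{ThmBoundHarnackLocal} lies in $\Omega$ and that $f>m$ everywhere on $\mathcal{R}_{y_0}\setminus\{p\}$. On the compact top edge $\{y=y_0,\ |x|\le 4\Lambda y_0\}$ the operator $L$ is uniformly elliptic, hence $f$ is $C^{1,\alpha}$ there by interior regularity; continuity on a compact set together with the strict inequality $f>m$ produces a positive gap
\begin{equation*}
\inf_{|x|\le 4\Lambda y_0} f(x,y_0) \;\ge\; m+\delta \quad \text{for some } \delta>0.
\end{equation*}

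The key step is then to apply Theorem \ref{ThmBoundHarnackLocal} to the shifted function $g:=f-m$. By construction $g\ge 0$ on $\mathcal{R}_{y_0}$ and $g(p)=0$, and linearity gives $L g = L f - mc$. In the main case $m\ge 0$ the sign conditions $c\ge 0$ and $L f\le 0$ force $L g\le 0$, so the boundary Harnack inequality is directly applicable and delivers
\begin{equation*}
0 \;=\; g(p) \;\ge\; \tfrac{1}{9}\inf_{|x|\le 4\Lambda y_0} g(x,y_0) \;\ge\; \tfrac{\delta}{9} \;>\; 0,
\end{equation*}
the desired contradiction. The supersolution half then follows verbatim by running the same argument on $-f$.

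The main obstacle is the case $m=f(p)<0$: the shift $g=f-m$ is still non-negative with $g(p)=0$, but only satisfies $L g\le -mc=|m|c$, which can be strictly positive when $c>0$, so a naive application of Harnack is not available. To handle this range I would plan to absorb the extra forcing by a small auxiliary supersolution---for instance a multiple of $y^{\gamma}$ with $\gamma>b_2-1$, whose positive contribution to $L$ can be arranged to dominate $|m|c$ on $\mathcal{R}_{y_0}$ in the spirit of the computations performed in Lemma \ref{LemmaISubfunction}---or alternatively to invoke the paper's standing convention that solutions are taken non-negative, under which $m\ge 0$ automatically and the case distinction disappears.
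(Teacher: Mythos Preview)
Your approach is essentially the paper's own: shift $f$ by its value at $p$ to obtain a nonnegative function vanishing at $p$, then invoke the lower barrier of Lemma~\ref{LemmaISubfunction} to force a positive value at the origin. The paper applies the lemma directly with a small multiple $\epsilon\psi_{y_0}$, whereas you route through Theorem~\ref{ThmBoundHarnackLocal}; but that theorem is a one-line corollary of the lemma, so the two arguments are the same up to packaging. One cosmetic point: Theorem~\ref{ThmBoundHarnackLocal} as stated carries the extra hypothesis $L(f)\ge-\Lambda$, which you do not have here; since that hypothesis is never used in its proof, you may prefer to cite Lemma~\ref{LemmaISubfunction} directly, exactly as the paper does.

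Your worry about the case $m=f(p)<0$ is well taken and is in fact a gap the paper's own proof shares: when $c>0$ the shifted function $g=f-m$ only satisfies $L(g)\le |m|c$, not $L(g)\le 0$, and Lemma~\ref{LemmaISubfunction} is not literally applicable. The paper silently ignores this, presumably under its standing convention (stated in the introduction) that solutions are nonnegative, which forces $m\ge 0$ and dissolves the issue. Your proposed remedy via an auxiliary supersolution is the right instinct if one wants the theorem in full generality; alternatively, simply note that for $c\equiv 0$ the problem evaporates, and for $c\ge 0$ with $f\ge 0$ one has $m\ge 0$ automatically.
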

\begin{proof}
	Assume $L(f)\le0$, and for a contradiction assume $f$ obtains a strict local minimum at $(x_0,0)$.
	Without loss of generality we may assume $p=(0,0)$.
	By Theorem \ref{ThmContinuity} $f$ is continuous at $p$.
	There is some $y_0$ so that on the closed rectangular region $\mathcal{R}_{y_0}$ (see Lemma \ref{LemmaISubfunction}) the function $f$ obtains its global minimum at $p$.
	
	Then the function $\tilde{f}(x,0)=f(x,y)-f(0,0)$ is zero at $(0,0)$ and is otherwise positive on the rectangle $\mathcal{R}_{y_0}$.
	Therefore there is some number $\epsilon>0$ so that $\epsilon\psi_{y_0}(x,y)<f(x,y)-f(0,0)$ on the segment $x\in[-4\Lambda{}y_0,4\Lambda{}y_0]$, $y=y_0$.
	Lemma \ref{LemmaISubfunction} then forces $\tilde{f}(0,0)\ge\epsilon/9$, contradicting $\tilde{f}(0,0)=1$.
	
	For the case $L(f)\ge0$, replace $f$ by $-f$.
\end{proof}

\subsection{Almost-Monotonicity for $y\rightarrow\infty$}

Here we prove the ``almost-monotonicity'' theorem, which strongly restrains the behavior of $f(x,y)$ along rays $y\mapsto{}f(x_0,y)$.
This theorem is global and requires $f\ge0$ on the open half-plane (although does not require local boundedness at $\{y=0\}$), and requires $b_2\ge1$ and $c\ge0$.
It states two things.
The first is that, as $y\mapsto\infty$, $f$ must ``almost'' approach its global minimum $\inf_{H^2}f$, and the second is that it does so in an ``almost'' monotonically decreasing fashion.

The result is strongest in the $c=0$ case, where $\lim_{y\rightarrow\infty}f(x_0,y)$ always exists and always equals $\inf_{H^2}f$.
\begin{proposition}[Almost Monotonicity] \label{PropAlmostMonot}
	Assume $|b_1|,|b_2|\le\Lambda$, $b_2\ge1$, $c\ge0$, and that $f\ge0$ satisfies $L(f)\le0$ and $L(f)\ge-\Lambda$ on the open half-plane $H^2$.
	Let $x_0\in\mathbb{R}$ and consider the function $y\mapsto{}f(x_0,y)$.
	
	Then there is some $\delta=\delta(\Lambda)>0$ so that
	\begin{eqnarray}
		\limsup_{y\rightarrow\infty}f(x_0,y)\;\le\;\delta^{-1}\inf_{(x,y)\in{}H^2}f(x,y). \label{IneqFunctionLimitAlmost}
	\end{eqnarray}
	Further, $y\mapsto{}f(x_0,y)$ has the ``almost monotonicity'' property, namely that
	\begin{eqnarray}
		f(x_0,y_2)<\delta^{-1}f(x_0,y_1) \label{IneqTrueAlmostMonotonicity}
	\end{eqnarray}
	whenever $y_2>y_1$.
	
	Finally in the case $c=0$, for any fixed $x_0$ we have that $\lim_{y\rightarrow\infty}f(x_0,y)$ exists and equals $\inf_{H^2}f$.
\end{proposition}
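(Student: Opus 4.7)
The almost-monotonicity inequality (\ref{IneqTrueAlmostMonotonicity}) is immediate from the global Harnack inequality already proved: given $y_2>y_1>0$, apply Theorem \ref{ThmBoundHarnackGlobal} with the point $(x_0,y_2)$ and the lower height $y_1\in[0,y_2]$ to get $f(x_0,y_1)\ge\delta\,f(x_0,y_2)$, which rearranges to (\ref{IneqTrueAlmostMonotonicity}).

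For the comparison with the global infimum (\ref{IneqFunctionLimitAlmost}), set $m=\inf_{H^2}f$ and let $\epsilon>0$. Pick a point $(x_*,y_*)\in H^2$ with $f(x_*,y_*)<m+\epsilon$. Translating in $x$ and applying Theorem \ref{ThmBoundHarnackGlobal} along the vertical line $x=x_*$ gives $f(x_*,y)\le\delta^{-1}(m+\epsilon)$ for every $y\ge y_*$. The horizontal control (\ref{IneqGlobalXGrowth}) from Proposition \ref{PropInteriorInitial} then yields
\begin{eqnarray*}
f(x_0,y)\;\le\;f(x_*,y)\,\exp\!\Bigl(\tfrac{D}{y}|x_0-x_*|\Bigr),
\end{eqnarray*}
whose exponential factor tends to $1$ as $y\to\infty$ with $x_0,x_*$ fixed. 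Hence $\limsup_{y\to\infty}f(x_0,y)\le\delta^{-1}(m+\epsilon)$, and since $\epsilon$ was arbitrary, (\ref{IneqFunctionLimitAlmost}) follows with this same $\delta$.

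For the sharp limit statement when $c=0$, we use the self-improvement trick that is made available by the observation that $L$ annihilates constants when $c=0$: for any constant $C$ the function $f-C$ satisfies $L(f-C)=L(f)$ and therefore retains both bounds $L(f-C)\le 0$ and $L(f-C)\ge-\Lambda$. Given $\epsilon>0$, apply this with $C=m-\epsilon$ to obtain $\tilde f:=f-m+\epsilon$, which is non-negative (indeed $\tilde f\ge\epsilon$), satisfies the same hypotheses as $f$, and has $\inf_{H^2}\tilde f=\epsilon$. Applying the already-proven (\ref{IneqFunctionLimitAlmost}) to $\tilde f$ gives
\begin{eqnarray*}
\limsup_{y\to\infty}f(x_0,y)\;\le\;m-\epsilon+\delta^{-1}\epsilon\;=\;m+(\delta^{-1}-1)\epsilon,
\end{eqnarray*}
and letting $\epsilon\searrow 0$ yields $\limsup_{y\to\infty}f(x_0,y)\le m$. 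Combined with $f\ge m$ we conclude $\lim_{y\to\infty}f(x_0,y)=m=\inf_{H^2}f$.

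The routine part is the first two steps, where the required machinery (the global Harnack inequality and the $x$-direction gradient bound) is already in hand. The conceptually key step is the $c=0$ improvement: one must recognize that (\ref{IneqFunctionLimitAlmost}) is not tight by itself, but that applying it to the translated solution $f-m+\epsilon$---legitimate only because $c=0$ makes constants lie in the kernel of $L$---upgrades a constant-factor bound into a sharp equality. This is where the hypothesis $c=0$ is essential; an analogous trick fails when $c\ge 0$ is only non-strict, and indeed one does not expect $\lim f(x_0,y)=\inf f$ in that generality.
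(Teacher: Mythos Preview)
Your proof is correct and, for the key inequality (\ref{IneqFunctionLimitAlmost}), takes a cleaner route than the paper. Both you and the paper handle (\ref{IneqTrueAlmostMonotonicity}) identically, by invoking Theorem \ref{ThmBoundHarnackGlobal}. For the $c=0$ sharpening, the paper subtracts the full infimum so that $\inf f=0$ and reads off $\limsup\le\delta^{-1}\cdot 0=0$; your version with the $\epsilon$-shift is the same idea with a safety margin, and equally valid.

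The genuine difference is in (\ref{IneqFunctionLimitAlmost}). The paper rescales coordinates so that a near-infimum point lands in a tiny ball near the origin, then re-applies the barrier $\psi_1$ of Lemma \ref{LemmaISubfunction} directly (together with the gradient estimate along $\{y=1\}$) to bound $f(0,1)$ in terms of $\inf f$, and finally uses the global Harnack to push this bound out to all large $y$. You instead stay in the original coordinates: apply the global Harnack along the vertical line through the near-infimum point to control $f(x_*,y)$ for all $y\ge y_*$, and then use the horizontal exponential bound (\ref{IneqGlobalXGrowth}) to transfer this to $x_0$, exploiting that the exponential factor $\exp(D|x_0-x_*|/y)\to 1$ as $y\to\infty$. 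Your argument is shorter because the barrier work is already packaged inside Theorem \ref{ThmBoundHarnackGlobal}; the paper effectively reproves part of that theorem in situ. The paper's route, on the other hand, yields an explicit bound $f(x_0,y)\le 9\delta^{-1}e^{4D\Lambda}(\epsilon+\inf f)$ valid for \emph{all} $y$ beyond a threshold (not just the $\limsup$), which is marginally more information, though not needed for the stated conclusion.
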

\begin{proof}
	After translating in the $x$-direction, we may assume $x_0=0$.
	The inequality (\ref{IneqTrueAlmostMonotonicity}) is simply the Harnack inequality from Theorem \ref{ThmBoundHarnackGlobal}.
	
	To prove (\ref{IneqFunctionLimitAlmost}), pick any $\epsilon>0$, and let $(\bar{x},\bar{y})$ be a point with $f(\bar{x},\bar{y})<\epsilon+\inf_{H^2}f$.
	Now scale both $x$ and $y$ coordinates by $\frac{1}{1000(|\bar{x}|+\bar{y})}$.
	In the new coordinates, therefore, we have some point $(x',y')$ within the ball of radius $1/1000$ around $(0,0)$ where $f(x',y')<\epsilon+\inf_{H_2}f$.
	
	By the interior gradient estimate Proposition, \ref{PropInteriorInitial}, we have the bound $f(x,1)>e^{-D|x|}f(0,1)$ on the line $\{y=1\}$.
	Therefore, by Lemma \ref{LemmaISubfunction} the function
	\begin{eqnarray}
		\underline{f}(x,y)
		\;=\;f(0,1)\cdot{}e^{-4D\Lambda}\cdot\psi_1(x,y)
	\end{eqnarray}
	is a lower barrier for $f$, on the domain $\{y<1\}$ where $\psi_1$ is positive.
	Examining this lower barrier, we see $\underline{f}(x,y)>f(0,1)\cdot{}e^{-4D\Lambda}\frac{1}{9}$ on the ball of radius $1/1000$.
	Therefore
	\begin{eqnarray}
		f(x',y')
		\;\ge\;\underline{f}(x',y')
		\;\ge\;
		f(0,1)\cdot{}e^{-4D\Lambda}\frac{1}{9}
	\end{eqnarray}
	and we obtain the result that $f(0,1)\le9e^{-4D\Lambda}\left(\epsilon+\inf_{H^2}f\right)$.
	Applying the Harnack inequality again therefore gives
	\begin{eqnarray}
		f(0,y)\le\delta^{-1}f(0,1)\;\le\;9\delta^{-1}e^{4D\Lambda}\,\left(\epsilon+\inf_{H^2}f\right) \label{IneqHarnackResultFuff}
	\end{eqnarray}
	Sending $\epsilon\searrow0$ provides the conclusion (\ref{IneqFunctionLimitAlmost}).
	
	To prove the special result for the case $c=0$, note that we can add or subtract any value from $f$ and still retain $L(f)\le0$.
	Therefore we can assume $\inf_{H_2}f=0$, and from (\ref{IneqHarnackResultFuff}) conclude that
	\begin{eqnarray}
		\begin{aligned}
		0\;\le\;\liminf_{y\rightarrow\infty}f(0,y)
		\;\le\;\limsup_{y\rightarrow\infty}f(0,y)
		\;\le\;\delta^{-1}\inf_{H^2}f
		\;=\;0
		\end{aligned}
	\end{eqnarray}
\end{proof}

\section{The Liouville Theorem}

Our foundational result, Proposition \ref{PropInteriorInitial}, provides interior growth estimates that are polynomial in $y$ and exponential in $x$.
The first aim of this section is to improve this to polynomial growth/decay in $x$.

In attempting to prove the Liouville theorem, the idea is to try to construct an upper barrier on some strip that rises more quickly to infinity in the $x$-direction than any solution $f$.
Such an upper barrier could be used to crush down the value of $f$ to zero in the regions of moderate $x$-values.
The difficulty in this strategy is that close to $\{y=0\}$ we lose control over the growth of $f$, and on the boundary itself we have no restrictions whatever on growth: possibly $x\mapsto{}f(x,0)$ has extreme growth like $e^{e^x}$, or oscillates wildly; this makes any kind of upper barrier argument simply impossible.

We remedy this by taking advantage of the scale-invariance of the operator $L$ and using a blow-up style argument in order to capture some region of very large, but controlled growth.
But we can only ever reduce the situation to exponential growth bounds in $x$ this way.
This is insufficient, because the barriers available to us themselves have fixed exponential growth bounds, and it doesn't seem possible to force the exponential rate $D$ from Proposition \ref{PropInteriorInitial} to be smaller than the exponential rate available to us in the barriers.

The next proposition helps remedy this by improving on the exponential growth bounds from Proposition \ref{PropInteriorInitial} to interior polynomial bounds in the $x$ direction.

\begin{proposition}[Polynomial bounds in $x$] \label{PropPolyXBounds}
	Assume $|b_1|,|b_2|,|c|<\Lambda$, and $b_2\ge1$, $c\ge0$.
	Assume $f\ge0$ satisfies $L(f)\le0$ and $L(f)\ge-\Lambda$ on the open half-plane $H^2$.
	There exists a constant $\delta=\delta(\Lambda)>0$ so that for any two values $x,x'\in\mathbb{R}$ we have the growth/decay bounds
	\begin{eqnarray}
		\begin{aligned}
		&f(x,y)
		\;\le\;f(x',y)\frac{1}{\delta}\left(\frac{|x-x'|}{y}+1\right)^{D} \\
		&f(x,y)
		\;\ge\;f(x',y)\delta\left(\frac{|x-x'|}{y}+1\right)^{-D}
		\end{aligned}
	\end{eqnarray}
	where $D=D(\Lambda)$ is from the interior gradient estimate, Proposition \ref{PropInteriorInitial}.
\end{proposition}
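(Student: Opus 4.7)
The plan is to prove the polynomial bound by an interpolation argument: I would travel from $(x', y)$ to $(x, y)$ along the three-leg path $(x', y) \to (x', Y) \to (x, Y) \to (x, y)$, where the intermediate height is chosen to be $Y := |x - x'| + y$. This choice is dictated by the observation that the interior gradient estimate gives an exponential rate of $D/Y$ in the $x$-direction at height $Y$, so integrating across an $x$-interval of length $|x-x'| \leq Y$ produces only a universal exponential factor, which is the mechanism that converts exponential control into polynomial control.

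First I would apply the interior gradient estimate (Proposition \ref{PropInteriorInitial}) in the vertical direction. Integrating $|\partial_y \log f| \leq D/y$ from $y$ up to $Y$ yields the polynomial bound
\begin{equation*}
f(x,y) \;\leq\; \left(\tfrac{Y}{y}\right)^D f(x, Y).
\end{equation*}
Next, integrating $|\partial_x \log f| \leq D/Y$ at fixed height $Y$ gives $f(x, Y) \leq e^{D|x-x'|/Y} f(x', Y) \leq e^D f(x', Y)$, where the last inequality uses $|x-x'| \leq Y$. Finally I would invoke almost-monotonicity (Proposition \ref{PropAlmostMonot}) along the vertical line $\{x = x'\}$ to obtain $f(x', Y) \leq \delta^{-1} f(x', y)$; this is the step that genuinely requires the sign conditions $b_2 \geq 1$ and $c \geq 0$.

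Chaining the three estimates produces
\begin{equation*}
f(x, y) \;\leq\; \delta^{-1} e^D \left(\tfrac{|x-x'|}{y} + 1\right)^D f(x', y),
\end{equation*}
which is the asserted upper bound after absorbing $e^D$ into the constant $\delta^{-1}$. The matching lower bound follows immediately by interchanging the roles of $x$ and $x'$ in the upper bound.

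I do not anticipate a serious obstacle here, since all three ingredients are already available from Sections 3 and 4. The only conceptual point is recognizing that the vertical polynomial bound from the interior gradient estimate and the horizontal exponential bound from the same estimate combine favorably precisely when $Y$ is chosen so that $Y \sim |x - x'|$, thereby taming the exponential factor $e^{D|x-x'|/Y}$ into a universal constant and relegating all unbounded growth to the single polynomial factor $(Y/y)^D$.
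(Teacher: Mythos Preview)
Your proof is correct and is genuinely more elementary than the paper's. Both arguments rest on the same two pillars---the interior gradient estimate (Proposition~\ref{PropInteriorInitial}) and the global Harnack inequality (Theorem~\ref{ThmBoundHarnackGlobal}, equivalently the almost-monotonicity inequality)---but they deploy them very differently. The paper builds a new subsolution $\psi_{y_d}(x,y)=C_1\,g(y_0y/y_d)\,e^{-y_0x/y_d}$ from confluent hypergeometric functions, verifies that $g$ has a zero so the barrier dies at some height $y_d$, fits it underneath $f$ along the $y$-axis using the polynomial-in-$y$ bound and the Harnack lower bound, and then reads off the polynomial decay in $x$ by tuning the scaling parameter $y_d\sim x_0$. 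Your three-leg path with intermediate height $Y=|x-x'|+y$ sidesteps this barrier construction entirely: the vertical leg produces the polynomial factor $(Y/y)^D$, the horizontal leg at height $Y$ contributes only the universal factor $e^D$ because $|x-x'|/Y\le 1$, and almost-monotonicity closes the loop back down to height $y$. The paper's route has the virtue of producing an explicit subsolution that may be useful elsewhere, but for the stated proposition your argument is cleaner and makes transparent exactly why the exponential bound upgrades to a polynomial one.
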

\begin{proof}
	We may shift the $x$-coordinate and simultaneously scale the $x$ and $y$ coordinates so that without loss of generality we may assume $x'=0$ and $y=1$.
	Multiplying $f$ by a constant if necessary, we may assume $f(0,1)=1$.
	
	The proof will require construction of a lower barrier.
	Taking cues from the separation of variables technique our barrier will have the form
	\begin{eqnarray}
		\varphi(x,y)\;=\;g(y)\,e^{-x}.
	\end{eqnarray}
	Plugging in to the operator $L$, after elementary simplification we obtain
	\begin{eqnarray}
		\begin{aligned}
		&L(g)
		\;=\;\left[y^2g''(y)+b_2yg'(y)+\left(c-b_1y+y^2\right)g(y)\right]e^{-x}
		\end{aligned}
	\end{eqnarray}
	We are only concerned with the region where $\psi\ge0$, so using $c\ge0$ and $b_2>-\Lambda$ we find
	\begin{eqnarray}
		\begin{aligned}
		&L(g)
		\;\ge\;\left[y^2g''(y)+b_2yg'(y)+\left(-\Lambda{}y+y^2\right)g(y)\right]
		e^{-x}
		\end{aligned}
	\end{eqnarray}
	Unfortunately it may be the case that $g'(y)$ have either a positive or negative sign; indeed at $y=0$ it is certainly the case the $g'(y)>0$, as the ODE is approximately $yg''+b_2g'-\Lambda{}g\ge0$ which is almost $g'>\Lambda/b_2$ for small $y$; recalling that $b_2\in[1,\Lambda]$, so in particular $b_2>0$, we have $g'(0)>0$.
	We therefore split the inequality into the cases where $g'\ge0$ and $g'<0$:
	\begin{eqnarray}
		\begin{aligned}
		L(g)
		&\ge\left[y^2g''(y)+yg'(y)+\left(-\Lambda{}y+y^2\right)g(y)\right]
		e^{-x}\quad \text{for} \quad g'(y)\ge0 \\
		L(g)
		&\ge\left[y^2g''(y)+\Lambda{}yg'(y)
		+\left(-\Lambda{}y+y^2\right)g(y)\right]
		e^{-x}\quad \text{for} \quad g'(y)<0.
		\end{aligned}
	\end{eqnarray}
	Looking for non-negative solutions of
	\begin{eqnarray}
		\begin{aligned}
		&y^2g''(y)+yg'(y)+\left(-\Lambda{}y+y^2\right)g(y)\;=\;0
		\;\;\,\quad \text{if} \quad g'(y)\ge0 \\
		&y^2g''(y)+\Lambda{}yg'(y)+\left(-\Lambda{}y+y^2\right)g(y)\;=\;0
		\quad \text{if} \quad g'(y)<0
		\end{aligned} \label{EqnSubfunctionDiffEQ}
	\end{eqnarray}
	we find the following:
	\begin{eqnarray}
		\begin{aligned}
		&g(y)\;=\; \\
		&
		\begin{cases}
		e^{-\sqrt{-1}y}
		{}_1F_1\left(\frac12\left(1-\sqrt{-1}\Lambda\right);1;\,2\sqrt{-1}y\right), & y\in[0,\bar{y}] \\
		\\
		C_1e^{-\sqrt{-1}y}
		{}_1F_1\left(\frac12\left(\Lambda-\sqrt{-1}\Lambda\right);\Lambda;\,2\sqrt{-1}y\right)
		 & \multirow{2}*{$y\in(\bar{y},y_0]$} \\
		\;+C_2Re\left[e^{-\sqrt{-1}y}
		U\left(\frac12\left(\Lambda-\sqrt{-1}\Lambda\right),\Lambda,\,2\sqrt{-1}y\right)
		\right], & 
		\end{cases}
		\end{aligned} \label{EqnSubfunForY}
	\end{eqnarray}
	where ${}_1F_1(a;b;y)$, resp. $U(a,b,y)$, is the confluent hypergeometric function of the first kind, resp. second kind---see, for instance, Appendix A of \cite{Figu1} for a derivation.
	The constants $C_1$, $C_2$ are chosen so that $g(y)$ remains $C^{1,1}$; for a depiction see Figure \ref{SubFigBasicG}.
	One can prove that the expression 
	$$e^{-\sqrt{-1}y}
	{}_1F_1\left(\frac12\left(A-\sqrt{-1}B\right);A;\,2\sqrt{-1}y\right)$$
	is actually real-valued when $A$ and $B$ are real-valued, although we shall not pursue this tedious verification; one can certainly just take the real-valued part of this expression and not worry if it is complex-valued or not.
	
	In (\ref{EqnSubfunForY}) the break point $\bar{y}$ occurs at the maximum of $g$ which we have labeled $\bar{y}$. The value $y_0$ is the first zero of $g$, and the coefficients $C_1$, $C_2$ are chosen so that both $g(\bar{y}^-)=g(\bar{y}^+)$ and $g'(\bar{y}^-)=g'(\bar{y}^-)$.
	See figure \ref{FigFunxBar} for a depiction.
	
	In fact only two aspects of the solution (\ref{EqnSubfunForY}) are important for our proof.
	The first is that $g(y)=1+\Lambda^{-1}y+O(y^2)$ and the second is that $g(y)$ has zeros.
	
	This author is unaware of any treatment of the locations of zeros for solutions of (\ref{EqnSubfunctionDiffEQ}), but nevertheless we can show that zeros must exist, for using $-\Lambda{}y<0$ and $-\Lambda{}y>-\frac12y^2-\frac12\Lambda^2$ we can see that solutions of (\ref{EqnSubfunctionDiffEQ}) are sandwiched between a Bessel function $J_0(y)$ and a function of the form $y^{-\Lambda/2}J_{\Lambda/2}(y)$.
	Both of these have zeros, so solution of (\ref{EqnSubfunctionDiffEQ}) are also forced to have zeros.
	Certainly as $y\rightarrow\infty$ the $(-\Lambda{}y+y^2)g(y)$ term in (\ref{EqnSubfunctionDiffEQ}) is nearly $y^2g(y)$, so solutions must be Bessel-like for large $y$.
	
	With $y_0=y_0(\Lambda)$ being the first zero of $g(y)$, then for any parameter $y_d>0$ consider the function
	\begin{eqnarray}
		\begin{aligned}
			\psi_{y_d}(x,y)
			\;=\;C_1g\left(\frac{y_0}{y_d}y\right)\,e^{-\frac{y_0}{y_d}x}
		\end{aligned} \label{EqnXBarrierFunction}
	\end{eqnarray}
	where we shall choose the constant $C_1=C_1(y_d,\Lambda)$ below.
	Due to the simultaneous scaling in both coordinates, we retain $L(\psi_{y_d})\ge0$.
	By design, we have that $\psi_{y_d}(x,y_d)=0$ for any $x$.
	
	Now choose a value $y_c>0$; using $y_c$ choose values $y_d$ and $C_1$ so that the function $y\mapsto{}\psi_{y_d}(0,y)$ has point of tangency with $y\mapsto{}y^{-D}$ at the point $y_c$ (where $D$ is the value from Proposition \ref{PropInteriorInitial}).
	Assuming $y_c$ is sufficiently large, then also $\varphi_{y_c}<\delta$, where $\delta$ is the value from the Harnack inequality, Theorem \ref{ThmBoundHarnackGlobal}.
	From these choices it follows that, on the line $\{x=0\}$, we have $\psi_{y_d}(0,y)\le{}f(0,y)$.
	
	\begin{figure}[h!] 
		\centering
		\noindent\begin{subfigure}{0.4\textwidth}
			\includegraphics[scale=0.3]{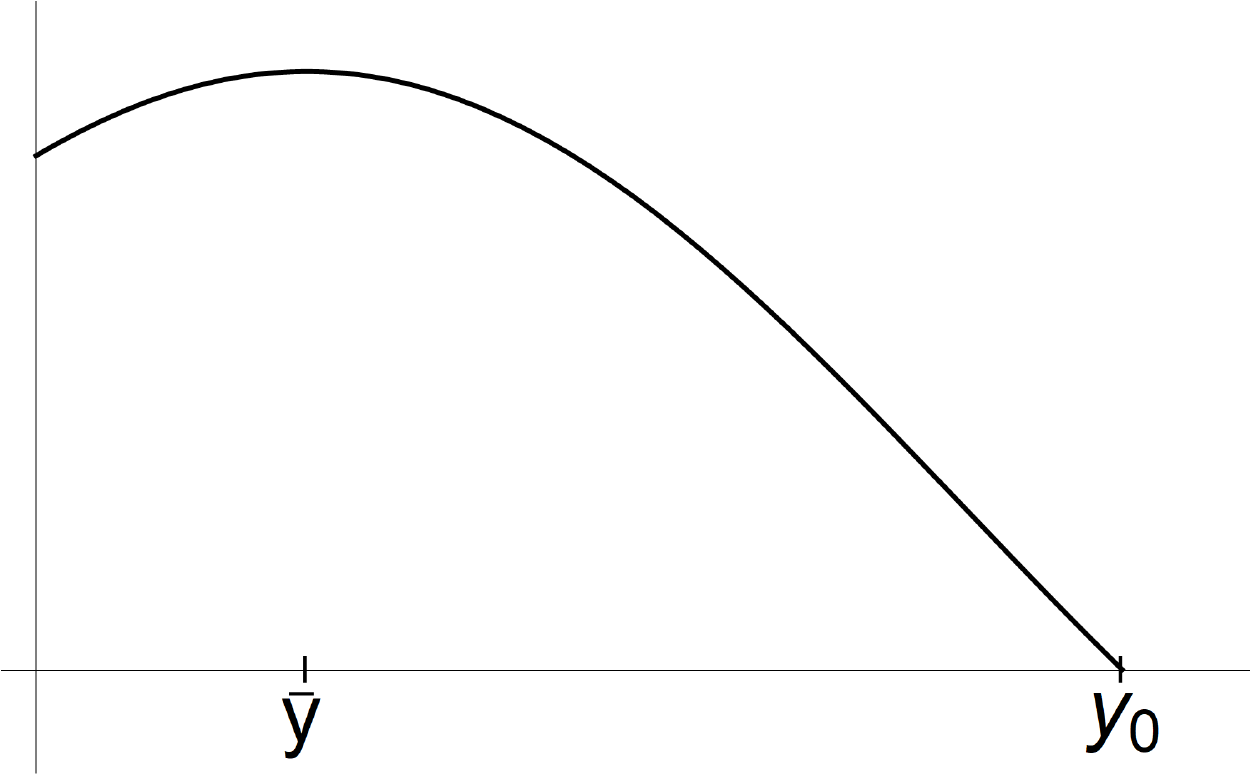}
			\caption{Depiction of the function $g(y)$, along with the ``break point'' $\bar{y}$. \\
				\vspace{0.45in}}\label{SubFigBasicG}
		\end{subfigure}
		\hspace{0.2in}
		\begin{subfigure}{0.4\textwidth}
			\includegraphics[scale=0.3]{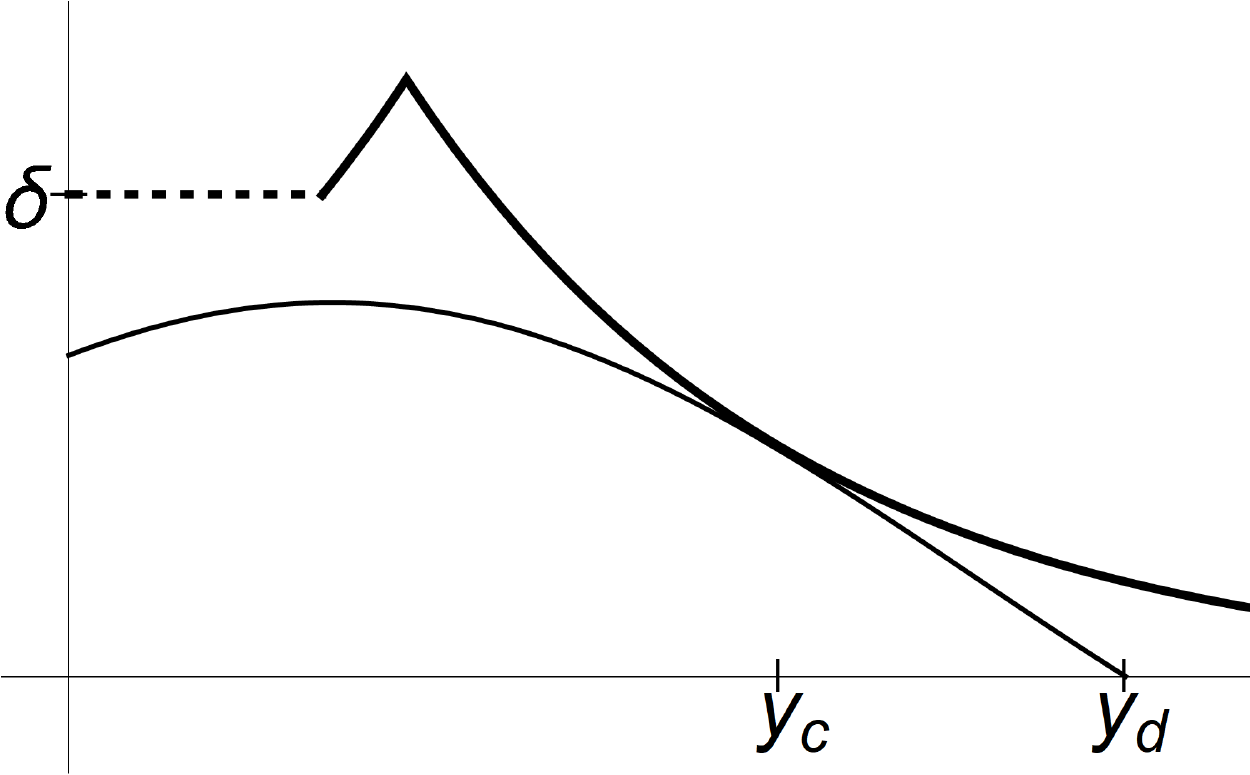}
			\caption{
				Lower bounds for $f$ depicted: the dashed line is from the Harnack inequality, the thick line is from the polynomial bound $y^{-D}$.
				After $y_c$ is chosen, then $y_d$ and $C_1$ are found so $\psi_{y_d}(0,y)<f(0,y)$.
			} \label{SubFigBoundedG}
		\end{subfigure}
		\caption{The function $g(y)$ and its interaction with bounds on $f$. \label{FigFunxBar}}
	\end{figure}

	Indeed more is true.
	Having bounded $\psi_{y_d}\le{}f$ on $\{x=0\}$ we can prove that $\psi_{y_d}\le{}f$ on the entire region $y\in[0,y_d]$, $x\ge0$.
	To see this, just subtract some small value $\epsilon$ from $\psi_{y_d}$ and note that $\psi_{y_d}-\epsilon\ge0$ on a compact subset of $\{y\ge0\}\cap\{x\ge0\}$.
	On the boundary of this compact subset we either have $\psi_{y_d}=0$ or else $\{y=0\}$ or $\{x=0\}$.
	On $\{x=0\}$ we have already seen $\psi_{y_d}<f$.
	On $\{y=0\}$ we need not even check whether $\psi_{y_d}<f$ or not; this is because we can always add a tiny multiple of $-\log{y}$, which forces $\psi_{y_d}<f$ near $y=0$, and then send this tiny multiple to $0$.
	Sending $\epsilon\searrow0$, we see $\psi_{y_d}\le{}f$ as claimed.
	
	Now having established that $\psi_{y_d}$ is a subfunction on the half-strip $\{x\ge0,\,y\in[0,y_d]\}$, we proceed to the proof of the proposition.
	
	\noindent\begin{figure}[h!]
		\vspace{-0.29in}
		\hspace{-1.9in}
		\begin{minipage}[c]{0.7\textwidth}
			\caption{
				\it Depiction of the barrier $\psi_{y_0}$ of (\ref{EqnXBarrierFunction}).
				Along the line $y$-axis, $\psi_{y_d}(0,y)=C_1g(y)$ with zero at $y_d$; we arrange it so $\psi_{y_d}(0,y)<f(0,y)$ along this axis, as depicted in Figure \ref{SubFigBoundedG}.
				We see exponential decrease in the $x$-direction.
				\label{FigBarrierX}
			}
		\end{minipage} \hspace{-0.5in}
		\begin{minipage}[c]{0.2\textwidth}
			\includegraphics[scale=0.475]{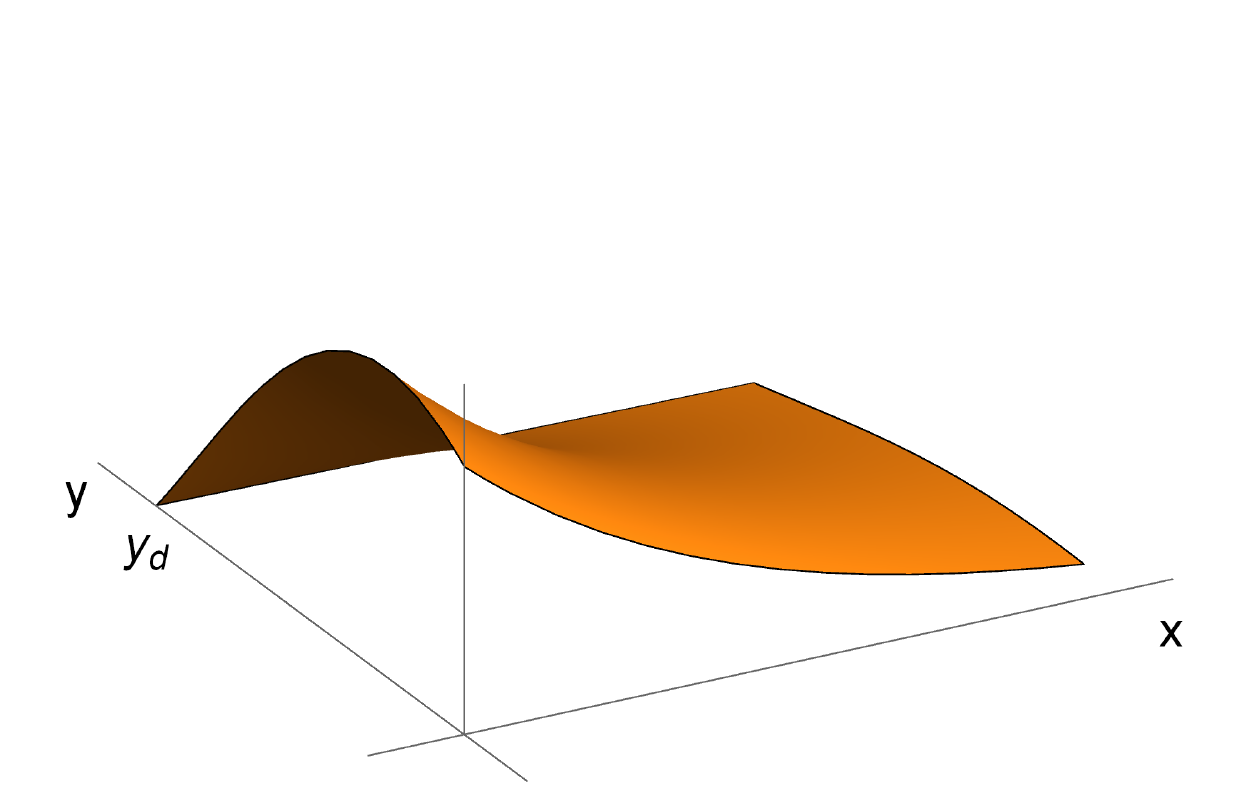}
		\end{minipage} \hfill
	\end{figure}

	Choose any sufficiently large $x_0>0$, and set $y_c=x_0$.
	Then find $y_d$ and $C_1$ so that $y\mapsto\psi_{y_d}(0,y)$ is tangent to $y\mapsto{}y^{-D}$ at $y_c$.
	For the argument, it will be sufficient to note that $C_1>\frac{1}{max_{y\in[0,y_c]}\{g(y)\}}y_c^{-D}$ and that $\frac{1}{max\,g(y)}$ is a function of $\Lambda$ only.
	We also remark that certainly $y_d>y_c$, as in Figure \ref{FigFunxBar}.
	Therefore
	\begin{eqnarray}
		\begin{aligned}
			f(x_0,1)
			&\;\ge\;\psi_{y_d}(x_0,1)
			\;=\;C_1\,g\left(\frac{y_0}{y_d}\right)\,e^{-\frac{y_0}{y_d}x_0} \\
			&\;\ge\;\frac{1}{max_y\{g(y)\}}y_c^{-D}
			\,g\left(\frac{y_0}{y_d}\right)\,e^{-\frac{y_0}{y_c}x_0}
			\quad \text{(by $y_d>y_c$)}
			 \\
			&\;=\;\frac{1}{max_y\{g(y)\}}x_0^{-D}
			\,g\left(\frac{y_0}{y_d}\right)\,e^{-y_0}
			\quad \text{(using $y_c=x_0$).}
		\end{aligned}
	\end{eqnarray}
	Finally we note that because we have chosen $x_0$ large and therefore $y_d$ large, we have $y_0/y_d\approx0$ so $g(y_0/y_d)=1+\Lambda^{-1}(y_0/y_d)+O((y_0/y_d)^2)<2$.
	Therefore
	\begin{eqnarray}
		\begin{aligned}
		f(x_0,1)
		&\;\ge\;
		\frac{2}{max\{g(y)\}}
		e^{-y_0}x_0^{-D}
		\;=\;
		\delta{}x_0^{-D}
		\end{aligned}
	\end{eqnarray}
	where $\delta=\delta(\Lambda)>0$ is defined to be $\delta=\frac{2}{max\{g(y)\}}
	e^{-y_0}$.
	
	Simultaneous scaling in both $x$ and $y$ coordinates, we see that
	\begin{eqnarray}
		f(x,y)\;\ge\;\delta\left(\frac{x}{y}\right)^{-D} \label{IneqFirstDecayIneq}
	\end{eqnarray}
	for $x/y$ sufficiently large.
	Whether $x/y$ is sufficiently large or not we always have $f(x,y)>f(0,y)e^{-D(x/y)}$ by the interior gradient bound, Proposition \ref{PropInteriorInitial}, and so by changing the constant $\delta$ if necessary we have
	\begin{eqnarray}
		f(x,y)\;\ge\;\delta\left(\frac{x}{y}+1\right)^{-D}
	\end{eqnarray}
	for all $x>0$.
	Recalling that we scaled $f$ so $f(0,y)=1$, the inequality for when $f$ has not been scaled is
	\begin{eqnarray}
		f(x,y)\;\ge\;f(0,y)\cdot{}\delta\left(\frac{x}{y}+1\right)^{-D}.
	\end{eqnarray}
	
	Ostensibly this is a decay estimate: we have shown that decay in the $x$-direction is no worse than polynomial.
	But of course a decay estimate is also a growth estimate for if, on the contrary, $f(x_0,y)>f(0,y)\frac{1}{\delta}(|x_0|/y)^{+D}$ then we simply make the coordinate transformation $x\mapsto{}x_0-x$ to obtain $f(0,y)>f(x_0,y)\frac{1}{\delta}(|x_0|/y)^{+D}$, contradicting the decay estimate (\ref{IneqFirstDecayIneq}).
\end{proof}

\begin{theorem}[The Liouville theorem] \label{ThmLiouvilleActual}
	Assume $|b_1|,|b_2|,|c|<\Lambda$, and $b_2\ge1$, $c=0$.
	Assume $f\ge0$ and $f$ solves $L(f)=0$ on the closed half-plane $\overline{H}{}^2$ (see Definition \ref{DefSolsAtDegBound}).
	Then $f$ is constant.
\end{theorem}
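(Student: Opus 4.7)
Since $c = 0$, constants solve $L = 0$, and I may subtract $m = \inf_{\overline{H}^2} f$ (finite by Proposition \ref{PropAlmostMonoIntro}) to normalize to $\inf f = 0$; the goal becomes to show $f \equiv 0$. First I dispose of the case that $f$ vanishes at some point. At an interior zero, the standard strong maximum principle (applicable since $L$ is uniformly elliptic on $H^2$ and $c = 0$) forces $f \equiv 0$. At a degenerate-boundary zero $(x_0, 0)$, applying the local boundary Harnack, Theorem \ref{ThmBoundHarnackLocalIntro}, on any rectangle $\mathcal{R}_{y_0}$ centered there yields $0 = f(x_0, 0) \geq \tfrac{1}{9}\inf_{|x-x_0|\leq 4\Lambda y_0} f(x, y_0)$, forcing $f$ to vanish at some interior point and reducing to the preceding case.

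\textbf{Main case.} Assume henceforth $f > 0$ everywhere on $\overline{H}^2$, yet $f \not\equiv 0$. Using scale-invariance of $L$ and an $x$-translation, normalize so that $f(0, 1) = \eta > 0$. By the strengthened $c = 0$ form of Proposition \ref{PropAlmostMonoIntro} we have $\lim_{y \to \infty} f(0, y) = 0$, so I may pick $Y$ as large as desired with $f(0, Y) < \epsilon$, where $\epsilon \ll \eta$ is to be chosen. Proposition \ref{PropPolyXBoundsIntro} then gives the top-edge bound $f(x, Y) \leq \epsilon \delta^{-1}(|x|/Y + 1)^D$, and analogous polynomial-in-$x$ bounds on the vertical sides $\{x = \pm A,\, y \in (0, Y]\}$, namely $f(\pm A, y) \leq f(0, y)\delta^{-1}(A/y + 1)^D$.

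\textbf{Upper barrier and comparison.} On a rectangle $R = [-A, A] \times [0, Y]$ with $A$ proportional to $Y$, construct an upper barrier $\Phi$ with (i) $L(\Phi) \leq 0$ in $R^{Int}$, (ii) $\Phi \geq f$ on the non-degenerate portion $\partial_1 R$, and (iii) $\Phi(0, 1) < \eta$. Mirroring the lower-barrier construction of Proposition \ref{PropPolyXBounds}, the natural ansatz is a separated form $\Phi(x, y) = \epsilon\, G(y)\, H(x)$, where $H(x)$ outgrows the polynomial $(|x|/Y + 1)^D$ in $x$ (for example $H(x) = \cosh(\alpha x)$ with $\alpha = \alpha(\Lambda)$), and $G$ is a non-negative function solving an ODE that forces $L(\Phi) \leq 0$---the supersolution analogue of the confluent-hypergeometric barrier used in Proposition \ref{PropPolyXBounds}. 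Then $\Phi - f$ satisfies $L(\Phi - f) \leq 0$ on $R^{Int}$ and $\Phi - f \geq 0$ on $\partial_1 R$. Combining the interior strong maximum principle on $R^{Int}$ with Theorem \ref{ThmWeakMaxIntro} at $\partial_0 R$---using an $\epsilon' \log(y/y_0)$ regularizer exactly as in Lemma \ref{LemmaISubfunction} to upgrade ``no strict degenerate-boundary minimum'' into ``$\Phi - f \geq 0$ throughout $R$''---gives $\Phi \geq f$ on $R$. Evaluating at $(0, 1)$ yields $\eta = f(0, 1) \leq \Phi(0, 1) < \eta$, the desired contradiction.

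\textbf{Main obstacle.} The technical heart is the explicit construction of $\Phi$: it must grow rapidly enough in $x$ to dominate the polynomial bounds along $\partial_1 R$ while simultaneously being small at $(0, 1)$, and it must do so under only $L^\infty$ bounds on $b_1, b_2$. As with Proposition \ref{PropPolyXBounds}, the ODE governing $G(y)$ is expected to require case-splitting on the sign of $G'$ and to involve confluent hypergeometric functions of the first and second kinds; the most delicate estimate will be showing that $\Phi(0, 1)$ can be driven strictly below $\eta$ after the polynomial dominance on $\partial_1 R$ has been secured, by taking $A$ and $Y$ both large (with $A$ comparable to $Y$) and $\epsilon$ appropriately small as a function of $\eta, D, \Lambda$.
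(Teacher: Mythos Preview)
Your overall architecture---normalize to $\inf f=0$, use almost-monotonicity to force decay along a ray, then run a comparison with an upper barrier of the form $\epsilon\,G(y)\cosh(\alpha x)$ on a large rectangle---matches the paper's. But there is a genuine gap: you have no control of $f$ on the vertical sides of $R=[-A,A]\times[0,Y]$ for small $y$. The bound you quote, $f(\pm A,y)\le f(0,y)\,\delta^{-1}(A/y+1)^D$, is useless near $y=0$ because both factors on the right blow up: the polynomial-in-$x$ estimate (Proposition~\ref{PropPolyXBounds}) degenerates as $y\to 0$, and nothing bounds $f(0,y)$ from above for $y<1$---the Harnack inequality only gives $f(0,y)\le\delta^{-1}f(0,0)$, and $f(0,0)$ could be astronomically larger than $\eta=f(0,1)$. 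Your barrier $\Phi(\pm A,y)=\epsilon\,G(y)\cosh(\alpha A)$ is bounded in $y$ (for any reasonable choice of $G$), so you cannot arrange $\Phi\ge f$ there. The $\epsilon'\log(y/Y)$ regularizer does not save this: it handles only the degenerate segment $\partial_0 R$, not the non-degenerate vertical sides, and after you send $\epsilon'\to 0$ you still need $\Phi(\pm A,y)\ge f(\pm A,y)$ for \emph{all} $y\in(0,Y]$.

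The paper confronts exactly this obstacle and resolves it with an extra point-picking step that you are missing. It introduces $\rho(x)=\sup\{y>0:\ f(x,y)>N^{-1}f(x,0)\}$, which is finite and positive by continuity at $\{y=0\}$ and almost-monotonicity, and then runs a halving/Cauchy-sequence argument to locate $x'$ with $\rho\ge\tfrac12\rho(x')$ on $[x'-R\rho(x'),\,x'+R\rho(x')]$. After rescaling so that $\rho(0)=1$, this yields $f(x,0)\le N\,f(x,\tfrac12)$ on $[-R,R]$, which, combined with the polynomial-in-$x$ bounds at height $\tfrac12$ and the boundary Harnack inequality $f(x,y)\le\delta^{-1}f(x,0)$, gives a \emph{uniform-in-$y$} polynomial bound $f(\pm R,y)\le N\delta^{-2}(1+2R)^D$ on the full vertical sides. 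Only then does the $\cosh$-barrier close. Your proposal needs this (or an equivalent device) inserted before the barrier comparison; as written, step~(ii) of your barrier requirements cannot be verified.
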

\begin{proof}
	For a proof by contradiction, assume $f$ is not constant.
	
	After subtracting a constant if necessary we may assume that $\inf_{\overline{H}{}^2}f=0$; because $c=0$ we retain $L(f)=0$.
	By Proposition \ref{PropAlmostMonot} for any fixed $\bar{x}$ we have $\lim_{y\rightarrow\infty}f(\bar{x},y)=0$.
	By the Harnack inequality at the boundary, Theorem \ref{ThmBoundHarnackGlobal}, we have $f(\bar{x},0)>0$.
	
	Pick some large $N$; the value $N=\frac{1}{0.11}\delta^{-1}(D!)^2\Lambda^D$ will suffice.
	Define the one-variable function $\rho(x)$ as follows:
	\begin{eqnarray}
		\rho(x)
		\;=\;\sup\left\{y>0\;\big|\; f(x,y)\,>\,\frac{1}{N}f(x,0)\right\}.
	\end{eqnarray}
	For any given $x$, $\rho(x)$ measures how long it takes $f$ to decay from what may be an extremely large value at the boundary down to values that are a small but definite fraction of this.
	We always have $0<\rho(x)<\infty$ because of two facts: continuity at the boundary ensures $\rho(x)>0$ and $\lim_{y\rightarrow\infty}f(x,y)=0$ ensures $\rho(x)<\infty$.
	Thus
	\begin{eqnarray}
		\rho:\mathbb{R}\,\longrightarrow\,(0,\infty). \label{MapRhoRange}
	\end{eqnarray}
	Continuity of $\rho$ easily follows from the continuity of $f$ on the closed half-plane, Theorem \ref{ThmContinuity}.
	
	Having defined $\rho$, we give an outline of the proof.
	First we perform a point-picking and scaling argument to create a situation where $\rho(0)=1$ and $\rho(x)>\frac12$ on some very large interval, $x\in[-R,R]$ for some very large $R=R(\Lambda)$.
	The fact that $\rho(0)=1$ means precisely that $f(0,0)=N\cdot{}f(0,1)$.
	Having done this, we observe that for all $x\in[-R,R]$, we actually have polynomial bounds at $\{y=0\}$: this is because we have polynomial bounds on $f(x,\frac12)$ along the line segment $\{(x,\frac12)\,\big|\,x\in[-R,R]\}$ and then the fact that $\rho(x)\ge\frac12$ means---by the definition of $\rho$---that $f(x,0)\le{}N{}f(x,\frac12)$.
	
	The second part of the argument is the barrier argument.
	We have uniform polynomial bounds on $f(x,y)$ on some very long strip $\mathcal{S}=\{(x,y)\,\big|\,y\in[0,1],\,x\in[-R,R]\}$.
	Then we place a barrier over top of $f(x,y)$ along the $\mathcal{S}$, and actually contradict the fact that $f(0,0)=N\cdot{}f(0,1)$.
	We can do this because the natural upper barriers available to us all have exponential growth, which vastly outstrips the polynomial growth for $f$ that we contrived with our point-picking argument.
	
	The first part of the argument is the pointpicking and re-scaling argument.
	Let $R$ be a very large number that we shall choose below.
	Choose any number $x_0$ and consider the interval $[x_0-R\rho(x_0),x_0+R\rho(x_0)]$.
	Let $x_1\in{}[x_0-R\rho(x_0),x+R\rho(x_0)]$ be a value with $\rho(x_1)<\frac12\rho(x_0)$, if such an $x_1$ exists.
	If such an $x_1$ does not exist, then we cease the process, satisfied with finding a value $x_0$ where $\rho(x)\ge\frac12\rho(x_0)$ in the interval $[x_0-R\rho(x_0),x_0+R\rho(x_0)]$.
	
	But if such an $x_1$ {\it does} exist, we set up an iteration process: assume $x_1,\dots,x_{i}$ have been chosen so $x_{j+1}\in[x_j-R\rho(x_j),x_j+R\rho(x_j)]$ and $\rho(x_{j+1})<\frac12\rho(x_j)$.
	The choose the next value $x_{i+1}$ to be any value in $[x_i-R\rho(x_i),x_i+R\rho(x_i)]$ with $\rho(x_{i+1})<\frac12\rho(x_i)$, assuming such a point exists.
	If such a point does not exist, we stop the process with the point $x_{i-1}$.
	
	This process must terminate at some {\it finite} stage.
	To see why, note that each $x_i$ must remain inside an interval of finite length around the original value $x$.
	To see this, we use $\rho(x_j)<2^{-j}\rho(x_0)$ to estimate
	\begin{eqnarray*}
		\begin{aligned}
			|x_i-x_j|
			&\;\le\;
			|x_i-x_{i+1}|+|x_{i+1}-x_{i+2}|+\dots+|x_{j-1}-x_j| \\
			&\;\le\;R\rho(x_i)+R\rho(x_{i+1})+\dots+R\rho(x_{j-1}) \\
			&\;\le\;R\left(2^{-i}+2^{-i-1}+\dots+2^{-j+1}\right)\rho(x_0)
			\;<\;2^{-i+1}R\rho(x_0).
		\end{aligned}
	\end{eqnarray*}
	Thus $x_0,x_1,\dots$ is a Cauchy sequence, and there is some value $x_\infty=\lim_{i\rightarrow\infty}x_i$.
	But $\rho$ is continuous, so $\rho(x_\infty)=\lim_i\rho(x_i)=0$.
	This is impossible by (\ref{MapRhoRange}).
	
	Therefore the point-picking process terminates at some value $x_j$, which we re-label $x'$.
	For this $x'$ we have $\rho(x)\ge\frac12\rho(x')$ for all $x\in[x'-R\rho(x'),x+R\rho(x')]$.
	
	Now re-scale the coordinate system, setting new coordinates
	\begin{eqnarray}
		\begin{aligned}
			\bar{x}\;=\;\frac{1}{\rho(x')}(x-x'), \quad
			\bar{y}\;=\;\frac{1}{\rho(x')}y.
		\end{aligned}
	\end{eqnarray}
	The function $\rho$ scales as a distance, so measured in this new system, we have $\rho(0)=1$ and $\rho(\bar{x})\ge\frac12$ for all $\bar{x}\in[-R,R]$.
	Multiplying the function $f$ by a constant, we have $f(0,1)=1$ and by the fact that $\rho(0)=1$ we have $f(0,0)=N$.
	The fact that $\rho(x)<\frac12$ on $x\in[-R,R]$ means precisely that $f(x,0)<N{}f(x,1/2)$.
	We can now verify the following facts on the very long strip $\mathcal{S}=\{(x,y)\,\big|\,x\in[-R,R],\;y\in[0,1]\}$:
	\begin{itemize}
		\item[{\it{a}})] Value at two points: $f(0,1)=1$ and $f(0,0)=N$
		\item[{\it{b}})] Bounds along the edge $\{y=1\}$: $f(x,1)<\delta^{-1}\left(1+|x|\right)^D$
		\item[{\it{c}})] Bounds along the edge $\{y=0\}$: $f(x,0)<N\delta^{-1}\left(1+2|x|\right)^D$
		\item[{\it{d}})] Bounds along the edge $\{x=\pm{}R\}$: $f(\pm{}R,y)\le{}N\delta^{-2}\left(1+2R\right)^D$.
	\end{itemize}
	Item ({\it{b}}) is due to Proposition \ref{PropPolyXBounds}.
	Item ({\it{c}}) follows from Proposition \ref{PropPolyXBounds} applied to $f(x,1/2)$ along with $f(x,0)<N{}f(x,1/2)$.
	Item ({\it{d}}) follows from the Harnack inequality, Proposition \ref{ThmBoundHarnackGlobal}.
	
	For the second part of our argument, we create an upper barrier.
	To this end, consider $G(x,y)=C_1f(x)g(y)$ where $f(x)=\cosh(x/\Lambda)$.
	Plugging in to the operator $L$ we find
	\begin{eqnarray}
		\begin{aligned}
			L(fg)
			&\;=\;y^2\left(f_{xx}g+fg_{yy}\right)
			+y\left(b_1f_xg+b_2fg_y \right) \\
			&\;\le\;\cosh(x/\Lambda)
			\left(y^2g_{yy}+yg_{y}
			+\left(y\frac{1}{\Lambda}b_1\tanh(x/\Lambda)+(y/\Lambda)^2\right)g\right) \\
			&\;\le\;\cosh(x/\Lambda)
			\left(y^2g_{yy}+yg_{y}
			+\left(y+(y/\Lambda)^2\right)g\right)
			\end{aligned}
	\end{eqnarray}
	where we have assumed $g_y<0$ and we used $|\tanh(x/\Lambda)|<1$.
	Solving $y^2g_{yy}+yg_{y}+\left(y+(y/\Lambda)^2\right)g=0$ gives
	\begin{eqnarray}
		g(y)
		\;=\;e^{-\frac{\sqrt{-1}}{\Lambda}y}\,
		{}_1F_1\left(\frac{1+\sqrt{-1}\Lambda}{2};\,1;\,\frac{2\sqrt{-1}}{\Lambda}y\right).
	\end{eqnarray}
	A quick examination, perhaps with a computer, will verify that $g$ is real valued, and is both positive and decreasing on the interval $y\in[0,1]$.
	We have $g(0)=1$ and, for all $\Lambda$ larger than about 2, $g(1)\ge0.22$.
	
	With our upper barrier being $G(x,y)=C_1f(x)g(y)$ we must choose a constant $C_1$ so that $G(x,1)>f(x,1)$.
	Considering the bound $f(x,1)<\delta^{-1}(1+|x|)^D$, we choose the value
	\begin{eqnarray}
		C_1\;=\;\frac{1}{0.22}\delta^{-1}(D!)^2\Lambda^D.
	\end{eqnarray}
	Notice this is half of our chosen value of $N$:
	\begin{eqnarray}
		N\;=\;\frac{1}{0.11}\delta^{-1}(D!)^2\Lambda^D.
	\end{eqnarray}
	Finally we choose $R=R(\Lambda)$ so big that
	\begin{eqnarray}
		4\delta^{-2}(1+2R)^D
		\;<\;0.22\cosh(R).
	\end{eqnarray}
	With these choices, we verify that our barrier $G(x,y)=C_1f(x)g(y)$ is actually larger than $f(x,y)$ on three boundary segments of the strip $S$.
	
	For the boundary segment $(x,1)$ where $x\in[-R,R]$, we have chosen the value of $C_1$ precisely so that
	\begin{eqnarray}
		G(x,1)\;>\;f(x,1).
	\end{eqnarray}
	For the two boundary segments $(\pm{}R,y)$, $y\in[0,1]$, we use $g(0)>0.22$ and our choices for $C_1$, $N$, and $R$ to compute
	\begin{eqnarray}
		\begin{aligned}
			f(\pm{}R,\,y)
			&\;\le\;N\delta^{-2}\left(1+2R\right)^D \\
			&\;\le\;\frac{0.22}{4}N\cosh(R)
			\;\le\;\frac14N\frac{1}{C_1}
			\;C_1\cosh(R)g(0) \\
			&\;\le\;\frac12C_1\cosh(R)g(0)
			\;\le\;\frac12C_1\cosh(R)g(y) \\
			&\;=\;\frac12G(\pm{}R,y).
		\end{aligned}
	\end{eqnarray}
	We have verified that $G>f$ on the three non-degenerate boundary segments of $\mathcal{S}$.
	It follows that $G$ is indeed a superfunction.
	In particular $G(0,0)\ge{}f(0,0)$.
	
	But then we see that
	\begin{eqnarray}
		\begin{aligned}
			C_1
			\;=\;G(0,0)
			\;\ge\;f(0,0)\;=\;N\;=\;2C_1.
		\end{aligned}
	\end{eqnarray}
	This contradiction established the result.
\end{proof}

\begin{corollary} \label{CorB2Less}
	Assume $\lambda>0$ is any constant, and assume $g\ge0$ solves
	\begin{eqnarray}
		y^2\triangle{}f\,+\,(1-\lambda)yf_y\;=\;0 \label{EqnB2LessThan1}
	\end{eqnarray}
	on the upper half-plane.
	Assume $g$ is continuous at $\{y=0\}$ and $f(x,0)=0$.
	Then some $C_1\ge0$ exists so that
	\begin{eqnarray}
		f(x,y)\;=\;C_1y^\lambda.
	\end{eqnarray}
\end{corollary}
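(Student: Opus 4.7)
The plan is to reduce to the Liouville Theorem \ref{ThmLiouvilleActual} via the substitution appearing in (\ref{EqnSolModEqn}). If $f \equiv 0$ the conclusion holds with $C_1 = 0$, so I shall assume $f > 0$ on $H^2$ (which follows from the interior strong minimum principle, since $L$ is uniformly elliptic away from $\{y=0\}$). Define $\bar{f}(x,y) := y^{-\lambda} f(x,y)$ on $H^2$. The computation preceding (\ref{EqnSolModEqn}) shows that $\bar{f} \geq 0$ weakly satisfies
\begin{equation*}
y^2 \triangle \bar{f} + (1+\lambda) y \bar{f}_y = 0 \qquad \text{on } H^2,
\end{equation*}
an Euler-type equation with $b_1 = 0$, $b_2 = 1+\lambda > 1$, and $c = 0$ --- precisely the hypotheses of Theorem \ref{ThmLiouvilleActual}.

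The crux is then to verify that $\bar{f}$ solves $L(\bar{f}) = 0$ on the closed half-plane $\overline{H}{}^2$ in the sense of Definition \ref{DefSolsAtDegBound}; since $\bar{f} \geq 0$, this reduces to showing $\limsup_{(x,y) \to (x_0, 0)} \bar{f}(x,y) < \infty$ for every $x_0$. Once this local boundedness is in hand, Theorem \ref{ThmLiouvilleActual} yields $\bar{f} \equiv C_1$ for some $C_1 \geq 0$, and hence $f(x,y) = C_1 y^\lambda$. The heuristic for boundedness is that the Euler ODE $y^2 g'' + (1-\lambda) y g' = 0$ has indicial exponents $0$ and $\lambda$, so a continuous non-negative solution vanishing on $\{y=0\}$ should vanish to order exactly $y^\lambda$, making $\bar{f}$ bounded. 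To make this rigorous, I would argue by contradiction plus blow-up: suppose $\bar{f}(x_i, y_i) \to \infty$ along some interior sequence $(x_i, y_i) \to (x_0, 0)$, set $M_i := \bar{f}(x_i, y_i)$, and rescale via $\bar{f}_i(\tilde{x}, \tilde{y}) := \bar{f}(x_0 + y_i \tilde{x}, y_i \tilde{y})/M_i$. Using the interior gradient estimate (Proposition \ref{PropInteriorInitial}) for $C^{1,\alpha}_{\mathrm{loc}}$-compactness, I would extract a non-negative limit $\bar{f}_\infty$ on $H^2$ with $\bar{f}_\infty(0,1) = 1$ solving the good equation. The hypothesis $f(\cdot,0)=0$ rescales to show that the correspondingly scaled $f_i = \tilde{y}^\lambda \bar{f}_i$ vanishes continuously at $\{\tilde{y} = 0\}$, and combining this with almost-monotonicity (Proposition \ref{PropAlmostMonot}, case $c=0$) and Harnack (Theorem \ref{ThmBoundHarnackGlobal}) for $\bar{f}_\infty$ is expected to force $\bar{f}_\infty$ to be constant --- producing the desired contradiction with $M_i \to \infty$.

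The main obstacle is precisely this local-boundedness step. Mere continuity with $f(x,0) = 0$ gives only $\bar{f} = o(y^{-\lambda})$, which is weaker than the needed $\bar{f} = O(1)$; and the interior gradient estimate applied to $\bar{f}$ caps blow-up only at rate $y^{-D}$ for some $D = D(1+\lambda)$, with no a priori relation between $D$ and $\lambda$. The blow-up/rescaling argument sketched above, together with its extraction of a contradiction, is the delicate part of the proof.
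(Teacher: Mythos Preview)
Your reduction is the same as the paper's: set $\bar f=y^{-\lambda}f$, check it solves the equation with $b_2=1+\lambda>1$ and $c=0$, and invoke Theorem~\ref{ThmLiouvilleActual} once $\bar f$ is known to be locally bounded at $\{y=0\}$. You also correctly isolate local boundedness of $\bar f$ as the entire content of the corollary.

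The gap is in your proposed blow-up proof of this boundedness. After rescaling $\bar f_i(\tilde x,\tilde y)=\bar f(x_i+y_i\tilde x,\,y_i\tilde y)/M_i$ and extracting a limit $\bar f_\infty$ on $H^2$, you want to feed in the boundary hypothesis $f(\cdot,0)=0$ to force a contradiction. But this is circular. Yes, $f_i(\tilde x,0)=0$ for each fixed $i$; however the continuity of $f_i$ at $\{\tilde y=0\}$ is \emph{not} uniform in $i$ --- uniform continuity would already say $\bar f$ is bounded, which is what you are trying to prove --- so nothing passes to the limit at the boundary. You therefore have only a nonnegative $\bar f_\infty$ on the \emph{open} half-plane with $\bar f_\infty(0,1)=1$, and no boundary information whatsoever. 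Almost-monotonicity and the global Harnack inequality do not help: applied to $\bar f$ directly they give $\bar f(x_i,1)\le\delta^{-1}\bar f(x_i,y_i)=\delta^{-1}M_i$ and $\bar f(x_i,y_i)\ge\delta\,\bar f(x_i,1)$, both of which bound $\bar f(x_i,1)$ \emph{above} by a multiple of $M_i$, never $M_i$ above by a constant. There is no contradiction to extract.

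The paper avoids this by a completely different, direct route: it builds explicit supersolutions $\overline f$ on a box around $(0,0)$ satisfying $\overline f(0,y)=O(y^\lambda)$, handling $\lambda\in(0,1)$, $\lambda=1$, $\lambda\in(1,2)$, $\lambda=2$, and $\lambda>2$ separately with hand-made barriers (polynomials in $x,y,y^\lambda$; the Hopf lemma; and for $\lambda\ge2$ the function $\frac12(\sqrt{(x-1)^2+y^2}+\sqrt{(x+1)^2+y^2})-1$). These dominate $f$ from above on the non-degenerate boundary of the box and hence everywhere, pinching $f(0,y)$ between $O(y^\lambda)$ barriers and giving $\bar f=O(1)$ immediately. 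This barrier argument is what your proposal is missing.
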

\begin{proof}
	One may check that the function $F(x,y)=y^{-\lambda}f(x,y)$ satisfies the equation $y^2\triangle{}F+(1+\lambda)F_y=0$.
	If we can verify that $F=y^{-\lambda}f$ is locally bounded at $\{y=0\}$, then the Liouville theorem shows that $y^{-\lambda}f(x,y)=C_1$, as desired.
	
	To verify this local boundedness, we pinch $f(x,y)$ near $(0,0)$ by a subfunction and a superfunction, each of which has $y^\lambda$ behavior near $(0,0)$.
	We remark that this is sufficient to pinch $f$ at any boundary value $(x,0)$, by the translation-invariance of the equation (\ref{EqnB2LessThan1}).
		
	Finding a {\it sub}function with the right behavior is easy: we use
	\begin{eqnarray}
		\underline{f}(x,y)=y^{\frac{\lambda}{2}}I_{\frac{\lambda}{2}}(y)\,cos(x)
	\end{eqnarray}
	where $I_\nu$ is the usual modified Bessel function of the first kind.
	A routine check shows it satisfies $y^2\triangle\underline{f}+(1-\lambda)\underline{f}{}_y=0$.
	After multiplying $\underline{f}$ by the constant $\underline{C}=\inf_{x\in[-\pi/2,\pi/2]}f(x,1)/I_{\lambda/2}(1)$, we easily see $\underline{f}(x,y)-\epsilon<f(x,y)$ for any positive $\epsilon$, and so $\underline{f}(x,y)\le{}f(x,y)$ on $y\in[0,1]$, $x\in[-\pi/2,\pi/2]$.
	Note also that $\underline{f}$ has the correct behavior at $y=0$, namely that $\underline{f}(0,0)=0$ and $\underline{f}(0,y)=O(y^{\lambda})$.
	
	Finding a {\it super}function to complete the sandwich at $(0,0)$ is trickier.
	We must find a supersolution $\overline{f}$ that is not only larger than $f$ but also displays the correct behavior at the origin: $\overline{f}(0,y)=O(y^\lambda)$.
	We break the task into the cases $\lambda\in(0,1)$, $\lambda=1$, $\lambda\in(1,2)$, $\lambda=2$, and $\lambda>2$.
	
	{\bf Case that $\lambda\in(0,1)$}.
	Consider the function 
	\begin{eqnarray}
		\overline{f}(x,y)\;=\;y^\lambda+\frac12x^2-\frac{1}{1-\lambda}y.
	\end{eqnarray}
	One checks that $y^2\triangle\overline{f}+(1-\lambda)y\overline{f}{}_y=y^2-y$ which is non-positive on $y\in[0,1]$, so this is a supersolution.
	
	Because $\overline{f}$ has behavior $x^2$ on the line $\{y=0\}$, we can multiply $\overline{f}$ by a sufficiently large number, if necessary, to ensure that it bounds $f$ from above on the region $x\in[-1,1]$, $y\in[0,1]$.
	Now we have that $\underline{f}(x,y)\le{}f(x,y)\le{}\overline{f}(x,y)$ near $(0,0)$, so $f(x,y)=O(y^\lambda)$ near $\{y=0\}$ as desired.
	
	{\bf Case that $\lambda=1$}.
	This is the case that the operator is just $L=y^2\triangle$.
	In this case, assuming $f$ solves $\triangle{}f=0$ and $f\ge0$, the classical Hopf lemma ensures that $f(x,y)=O(y)$ at any boundary point.
	
	{\bf Case that $\lambda\in(1,2)$}.
	Consider the function
	\begin{eqnarray}
		\overline{f}(x,y)\;=\;y^\lambda\,+\,\frac12x^2\,-\,\frac{1}{2(2-\lambda)}y^2.
	\end{eqnarray}
	One checks that $y^2\triangle\overline{f}+(1-\lambda)y\overline{f}_\lambda
	=0$.
	Again we have on the boundary that $\overline{f}(x,0)=\frac12x^2$, and the $\overline{f}$ is positive at least for small values of $y$.
	So this is indeed a superfunction.
	Because $\lambda<2$ we have that $\overline{f}(0,y)=O(y^\lambda)$.
	Repeating the argument from the first case, we have $\underline{f}<f<\overline{f}$, and we conclude that $f(0,y)=O(y^\lambda)$.
	
	{\bf Case that $\lambda=2$}.
	In this case we must use the slightly more complicated barrier
	\begin{eqnarray}
		\overline{f}(x,y)\;=\;
		-1+\frac12\left(\sqrt{(x-1)^2+y^2}+\sqrt{(x+1)^2+y^2}\right). \label{EqnLambda2Barrier}
	\end{eqnarray}
	See Figure \ref{FigPiecewiseSol} for a depiction.
	It may be checked directly that $y^2\triangle\overline{f}-y\overline{f}{}_y=0$.
	On the boundary $\{y=0\}$ one may verify piecewise-linearity:
	\begin{eqnarray}
		\overline{f}(x,y)
		=\begin{cases}
		-1-x, \quad & x\in(-\infty,-1) \\
		0, & x\in[-1,1] \\
		-1+x, & x\in(1,\infty).
		\end{cases}
	\end{eqnarray}
	Arguing as in the other cases, we see that, possibly after multiplying $\overline{f}$ by a sufficiently large constant, that $\underline{f}(x,y)<f(x,y)<\overline{f}(x,y)$.
	\begin{figure}[h]
		\hspace{-1.5in}
		\begin{minipage}[c]{0.7\textwidth}
			\caption{
				\it
				Upper barrier $\overline{f}(x,y)$ of (\ref{EqnLambda2Barrier}), showing
				piecewise linearity at $\{y=0\}$.
				At $(0,0)$ the function is quadratic in $y$: $\overline{f}(0,y)=\frac12y^2+O(y^4)$.
				\\
				\\ \label{FigPiecewiseSol}
			} 
		\end{minipage} \hspace{-0.1in}
		\begin{minipage}[c]{0.2\textwidth}
			\includegraphics[scale=0.35]{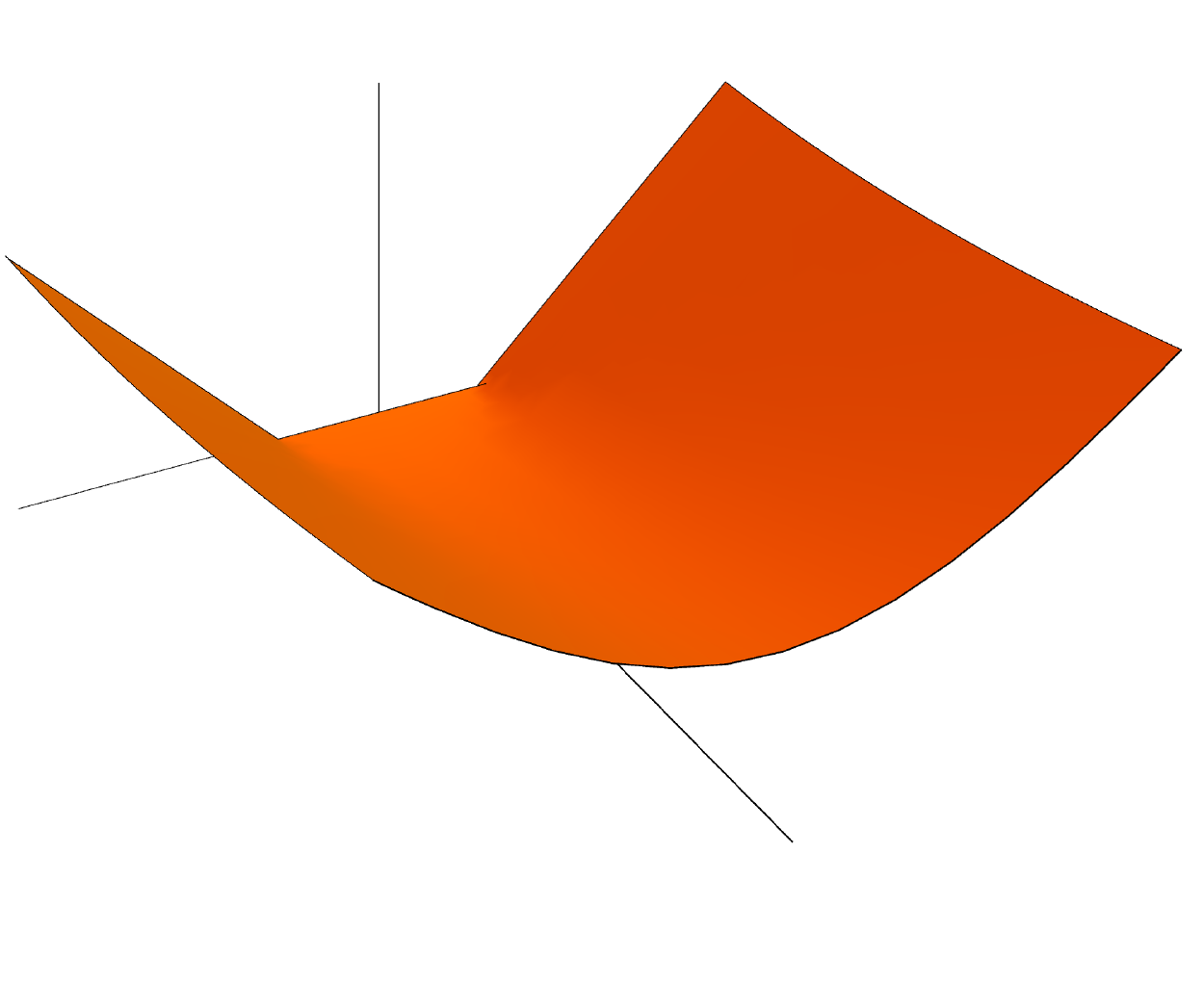}
		\end{minipage} \hfill
	\end{figure}
	After checking that, at $x=0$, we have $\overline{f}(0,y)=y^2+O(y^4)$, we conclude that indeed $f(0,y)=O(y^2)$.
	
	{\bf Case that $\lambda\in(2,\infty)$}.
	In this case we once again use the function $\overline{f}$ of (\ref{EqnLambda2Barrier}).
	This time we compute the strict inequality $y^2\triangle\overline{f}+(1-\lambda)\overline{f}{}_y<0$, so again $\overline{f}$ is a superfunction.
	As in the previous case, this allows us to conclude that $f(x,y)=O(y^2)$ at $\{y=0\}$.
	
	However, this means that $f(x,y)$ is actually $C^2$ near the boundary.
	Then we simply use $y^2f_{yy}+(1-\lambda)yf_y=-y^2f_{xx}$ which forces $f(x,y)=A(x)y^\lambda+H.O.T.$ for some one-variable function $A(x)$.
	We conclude, again, that $f(x,y)=O(y^\lambda)$.
\end{proof}

\section{Examples} \label{SecExamps}

These examples are roughly organized from most local phenomena to most global.
We start with examples showing the failure of our local results, Propositions \ref{PropFirstLocalGradEst} and \ref{PropUnspecifiabilityIntro} and Theorems \ref{ThmBoundHarnackLocalIntro} and \ref{ThmContinuityIntro}, and find out that when $b_1<1$ or if $b_1$ has no upper bound at the boundary, we have total failure: there is no local gradient estimate of the form $y|\nabla\log{}f|<D$, boundary values {\it can} be specified, the Harnack inequality at the boundary fails, and there is {\it no} continuity at $\{y=0\}$.
Example \ref{ExImps} shows that unspecifiability fails if local finiteness of $f$ is relaxed.

\subsection{Failures at $\{y=0\}$}

We fist give several examples showing how our ``local'' theorems fail if either $b_2<1$ or if the local boundedness assumption is forgotten.
When $b_2\in(0,1)$ we show it is always possible to specify boundary values at $\{y=0\}$, even though the results usually have bad differentiability at the boundary. \\

\refstepcounter{Examp}
{\bf Example \arabic{Examp}: Homogeneous solutions and steps.} \label{ExHomogAndSteps}

If $L$ has constant coefficients, meaning $b_1$, $b_2$, $c$ are constants, then we can reduce the equation $L(f)=0$ to an ordinary differential equation.
Assuming a solution of the form $f(x,y)=F(x/y)$, then $L(f)=0$ reduces to
\begin{eqnarray}
	\left(1+z^2\right)F_{zz}
	+\left(b_1+(2-b_2)z\right)F_{z}
	+cF\;=\;0
\end{eqnarray}
where $z=x/y$.
When $c\le0$ then $F$ is locally bounded, and when $b_2<1$ then $F$ is globally bounded.
\begin{figure}[h]
	\hspace{-2in}
	\begin{minipage}[c]{0.6\textwidth}
		\caption{
			\it Typical solution to $(1+z^2)F_{zz}+(2-b_2)zF_z=0$.\\
			\\ \label{FigZSol}
		} 
	\end{minipage} \hspace{-0.5in}
	\begin{minipage}[c]{0.2\textwidth}
		\includegraphics[scale=0.35]{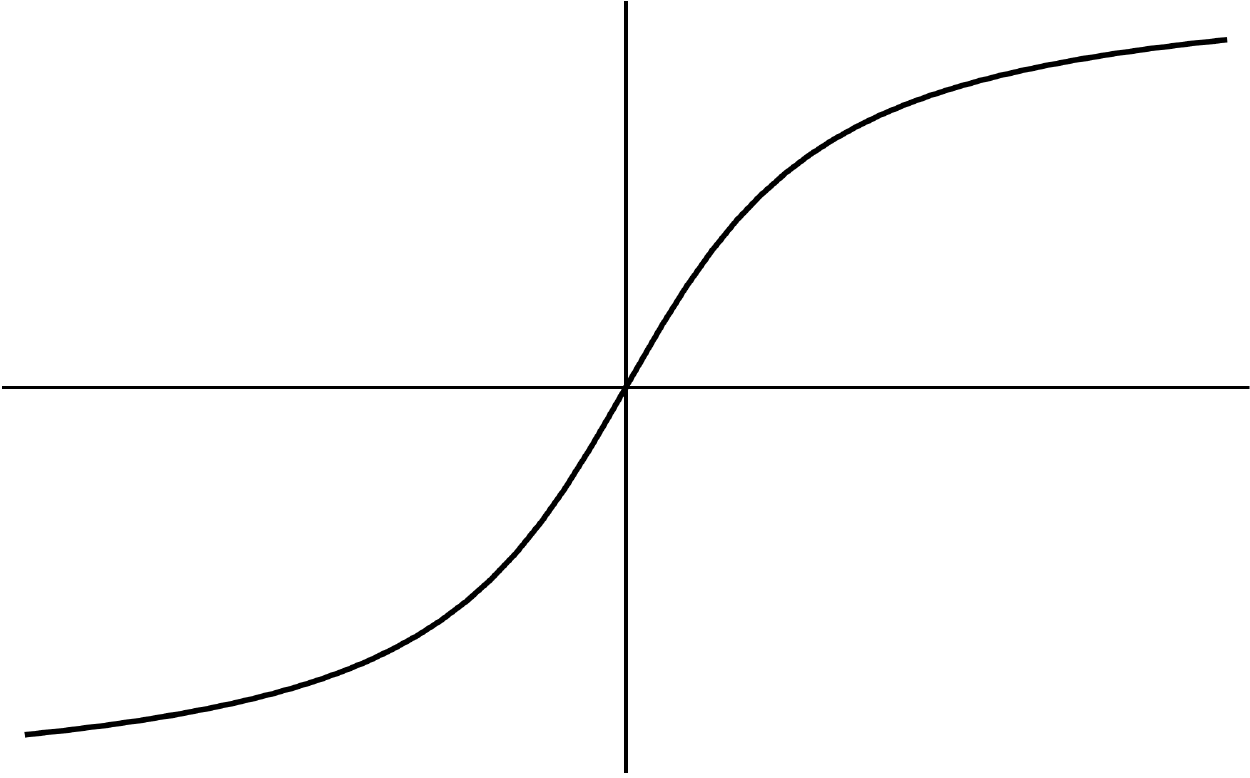}
	\end{minipage} \hfill
\end{figure}
Taking $b_1=c=0$ (for simplicity) we have the general solution
\begin{eqnarray}
	f(x,y)
	\;=\;C_1\frac{x}{y}\,
	{}_2F_1\left(\frac12, \frac12\left(2-b_2\right);\,\frac32;\,-\frac{x^2}{y^2} \right)
	+C_2
\end{eqnarray}
where ${}_2F_1$ is a hypergeometric function; see for example \S15.2({\it{i}}) of \cite{DLMF} for properties of this particular hypergeometric function.

If we also impose $b_2<1$ then actually the solution $f(x,y)$ is globally bounded, and we can easily choose $C_1$ and $C_2$ so that $\inf{}f=0$ and $\sup{}f=1$.
Then on the degenerate boundary the function $x\mapsto{}f(x,0)$ is a step function; see Figure \ref{FigSteps}.
This step-like solution has regularity $C^\infty$ on the interior; for $b_2\in(0,1)$ it is $C^{0,1-b_2}$ on $\{y=0\}$ except at the jump discontinuity at $(0,0)$, and for $b_2\le0$ it is $C^\infty$ except at $(0,0)$.
When $b_2\ge1$ this step is non-normalizable, and unbounded at $\{y=0\}$.
These examples showcase the major qualitative differences among the regimes $b_2\le0$, $b_2\in(0,1)$, and $b_2\ge1$.

\noindent\begin{figure}[h!] 
	\centering
	\noindent\begin{subfigure}[b]{0.4\textwidth} 
		\includegraphics[scale=0.5]{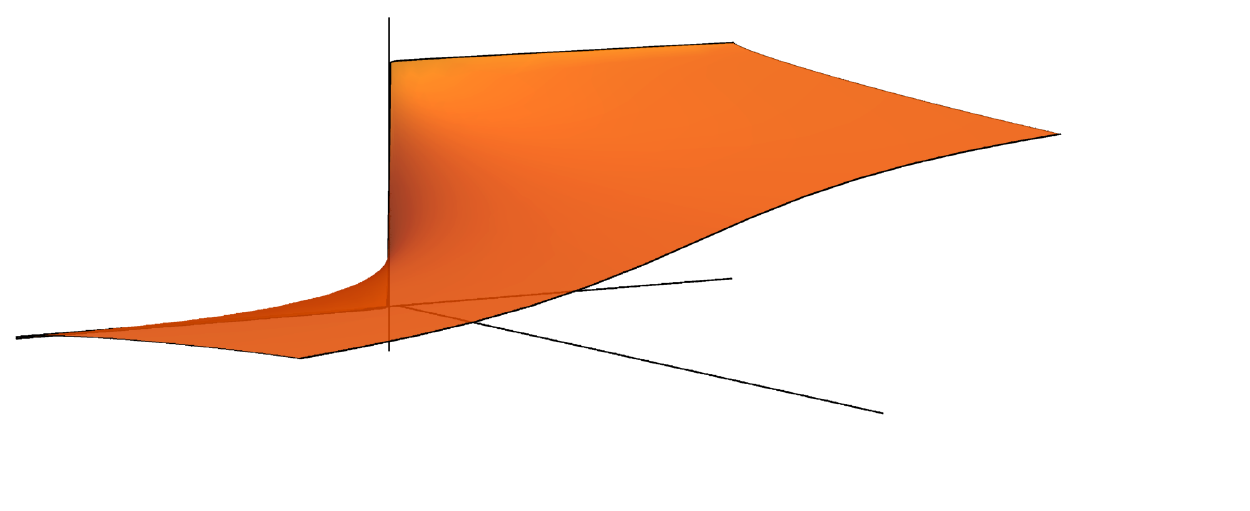}
		\caption{Step-like solution $f(x,y)=F(z/y)$ for $b_1=c=0$ and $b_2=0.5$; solution is $C^{0,1/2}$ at $\{y=0\}$ except at the discontinuity.
			\label{SubFigStepSol1}}
	\end{subfigure}
	\hspace{0.5in}
	\begin{subfigure}[b]{0.4\textwidth} 
		\includegraphics[scale=0.5]{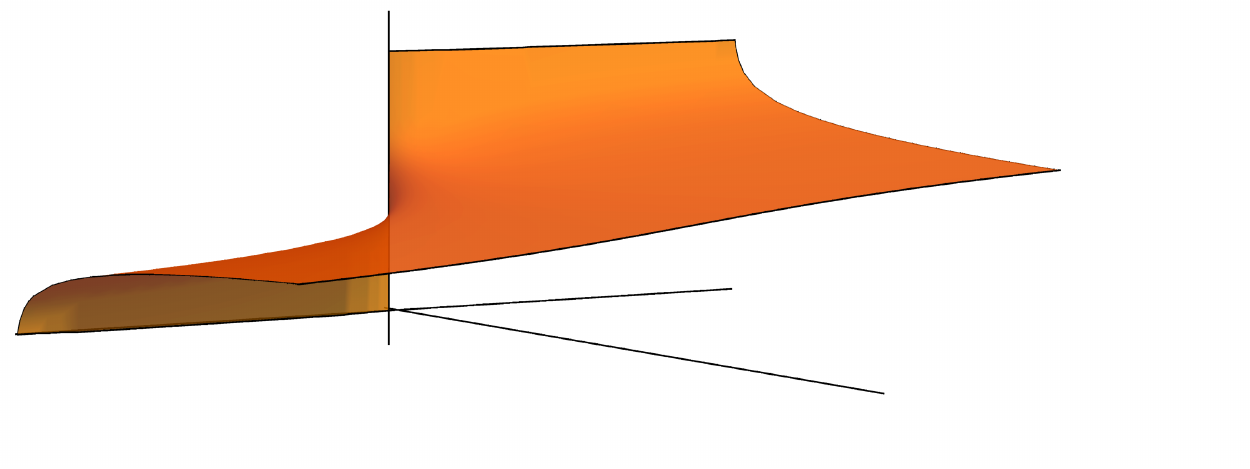}
		\caption{Step-like solution $f(x,y)=F(z/y)$ for $b_1=c=0$ and $b_2=0.9$; solution is only $C^{0,0.1}$ at $\{y=0\}$, except at the discontinuity. \label{SubFigStepSol2}}
	\end{subfigure}
	\caption{\it Solution with a step at $\{y=0\}$. }\label{FigSteps}
\end{figure}

This example shows that $f$ can be discontinuous on $\{y=0\}$ when $b_2<1$, meaning Theorem \ref{ThmContinuityIntro} is false when $b_2<1$.
This $f$ reaches its absolute minimum and absolute maximum on the boundary, contradicting the maximum principle when $b_2<1$.
Also, after adding a constant to $f$ so that $f\ge0$ but $f=0$ on a portion of the boundary, we see that the Harnack inequality, Theorem \ref{ThmBoundHarnackGlobal}, also fails for $b_2<1$. \\

\refstepcounter{Examp}
{\bf Example \arabic{Examp}: Impulses.} \label{ExImps}

With constant $b_1$, $b_2$, $c$, moving from the unit step to the unit impulse is simple: take a derivative with respect to $x$.
Assuming $c=0$ then a solution for $L(K)=0$ with a point-like singularity on the boundary is
\begin{eqnarray}
	K(x,y)\;=\;
	\frac{1}{y}\left(1+\left(\frac{x}{y}\right)^2\right)^{\frac{-2+b_2}{2}}
	Exp\left[-b_1\tan^{-1}\left(\frac{x}{y}\right)\right].
\end{eqnarray}
To justify the assertion that this is an impulse when restricted to $\{y=0\}$, note that after fixing any $y_0>0$, the integral $\int_{-\infty}^\infty{}K(x,y_0)\,dx$ gives a constant value, even as the function $\lim_{y_0\searrow0}K(\cdot,y_0)$ converges to zero everywhere except $x=0$, where it becomes unboundedly large.
One may check that in the case $b_2<1$ this value is finite, and we call the impulse {\it normalizable}: after multiplying by a constant then $x\mapsto{}K(x,0)$ is the unit Dirac-delta; see Figure \ref{SubFigImpNorm}.
If $b_2\ge1$ then $K(x,y)$ is no longer normalizable and the boundary singularity has infinite mass; see Figure \ref{SubFigImpNonNorm}.
\noindent\begin{figure}[h!]
	\centering
	\noindent\begin{subfigure}[b]{0.4\textwidth} 
		\includegraphics[scale=0.475]{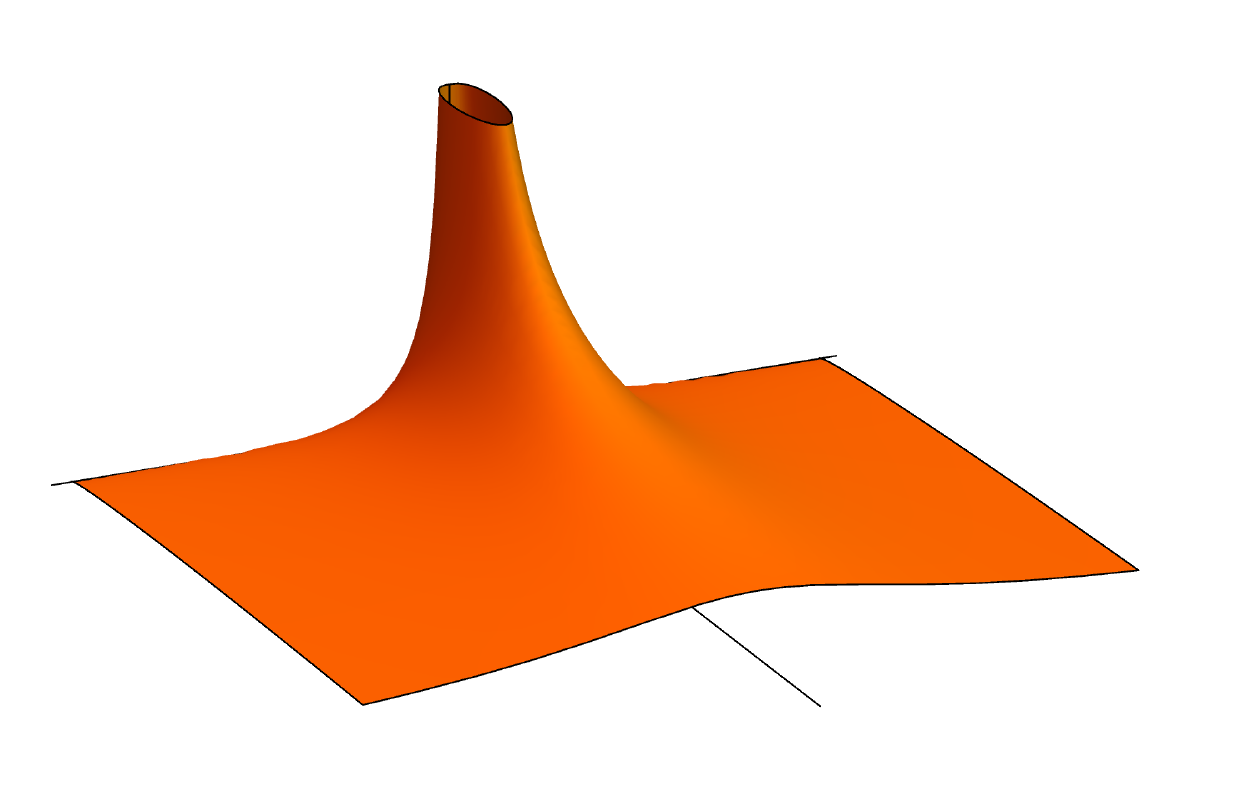}
		\caption{Solution for a normalizable impulse along $\{y=0\}$, with singular point $(0,0)$.
		Depicted is a solution for $b_2=0.5$.
		\label{SubFigImpNorm}}
	\end{subfigure}
	\hspace{0.25in}
	\begin{subfigure}[b]{0.4\textwidth} 
		\includegraphics[scale=0.475]{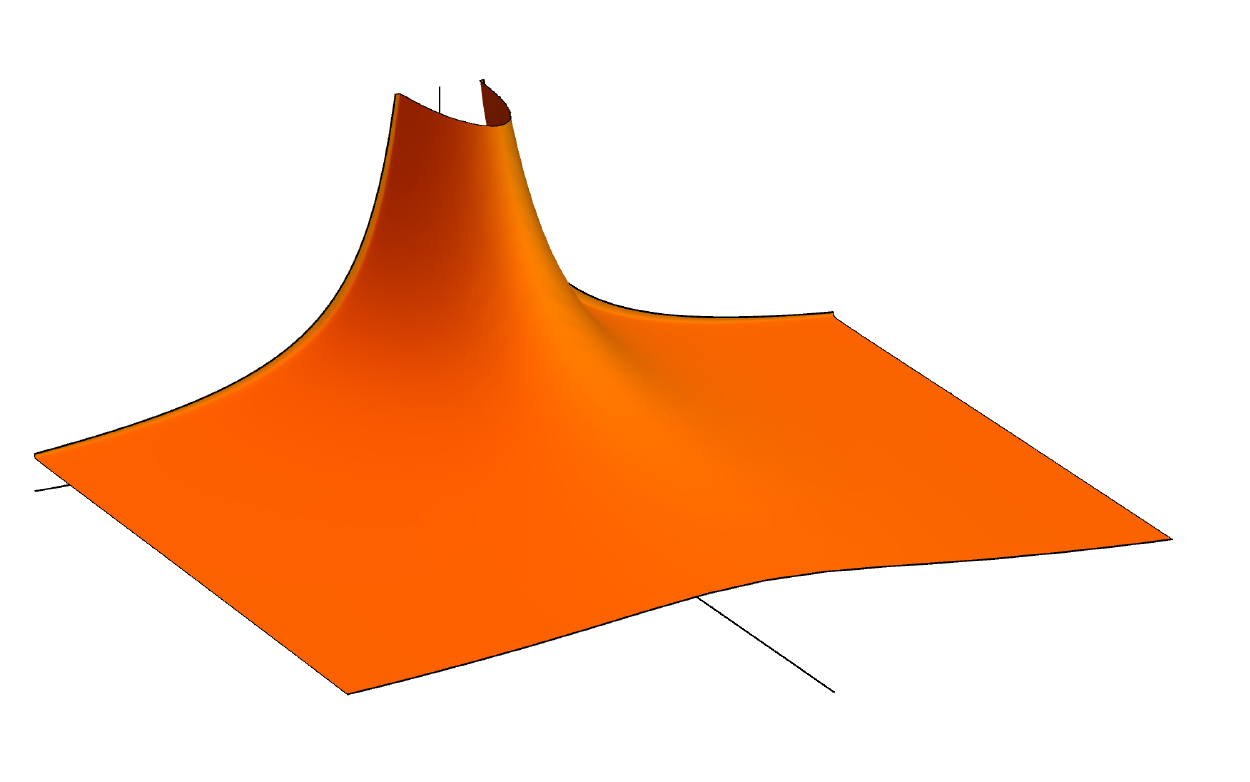}
		\caption{Solution for a non-normalizable impulse along $\{y=0\}$, with singular point $(0,0)$.
		Depicted is a solution for $b_2=1.5$. \label{SubFigImpNonNorm}}
	\end{subfigure}
	\caption{\it Solutions with an impulse at $\{y=0\}$. }\label{FigImps}
\end{figure}

This example shows, for instance, that the local finiteness conditions on Proposition \ref{PropUnspecifiabilityIntro} is indispensable.
Even further, it shows that boundary values {\it can} be specified whenever $b_2<1$, for using $K(x,y)$ as a kernel and using some function $f_0:\mathbb{R}\rightarrow\mathbb{R}$ as boundary conditions, then when $K$ is normalizable (which occurs when $b_2<1$) we have half-plane solutions
\begin{eqnarray}
	f(x,y)
	\;=\;\int_{-\infty}^\infty{}K(x-t,y)f_0(t)\,dt. \label{EqnKernelSolution}
\end{eqnarray}
Then assuming the usual conditions for convergence of (\ref{EqnKernelSolution}) indeed we have $f(x,0)=f_0(x)$.
We are therefore able to specify boundary values whenever $b_2<1$.

We can clearly observe the two ``phase changes'' in the behavior of solutions that we described in the introduction.
When $b_2\in(0,1)$ then smooth boundary values $f_0$ produce solutions $f$ that are only $C^{0,\alpha}$ near the boundary; indeed if $b_2$ is constant then at $\{y=0\}$ we ordinarily only get $f\in{}C^{0,1-b_2}$ and no better.
The second ``phase change'' occurs when $b_2\le0$, for then smooth boundary values produce smooth solutions. \\

\refstepcounter{Examp}
{\bf Example \arabic{Examp}: Failure of the maximum principle when $b_2<1$.} \label{ExMaxFailure}

For any constant $\lambda>0$ consider the equation
\begin{eqnarray}
	y^2\triangle{}f\,+\,(1-\lambda)yf_y\;=\;0.
\end{eqnarray}
On the region $\Omega=[-\pi/2,\pi/2]\times[0,1]$ we have the bounded non-negative solution
\begin{eqnarray}
	f(x,y)\;=\;y^{\frac{\lambda}{2}}K_{\frac{\lambda}{2}}(y)\,\cos(x)\;+\;C
\end{eqnarray}
where $K_\nu(y)$ is the familiar modified Bessel function of the second kind.
A maximum is reached at $(0,0)$, one example of which is depicted in Figure \ref{FigMaxPrinViolation}.
This demonstrates the failure of the maximum principle, Theorem \ref{ThmWeakMaxIntro}, when $b_1<1$.
We remark that when $\lambda\in(0,1)$, $f$ has regularity $C^{0,\lambda}$ at $\{y=0\}$.

\begin{figure}[h]
	\hspace{-1.9in}
	\begin{minipage}[c]{0.7\textwidth}
		\caption{
			\it A solution to $y^2\triangle{}f+\frac12yf_y=0$ with compact support, clearly violating the maximum principle.
			Depicted is $f(x,y)=y^{\frac14}K_{\frac14}(y)\cos(x)-0.55$.
			\\ \label{FigMaxPrinViolation}
		} 
	\end{minipage} \hspace{-0.5in}
	\begin{minipage}[c]{0.2\textwidth}
		\includegraphics[scale=0.475]{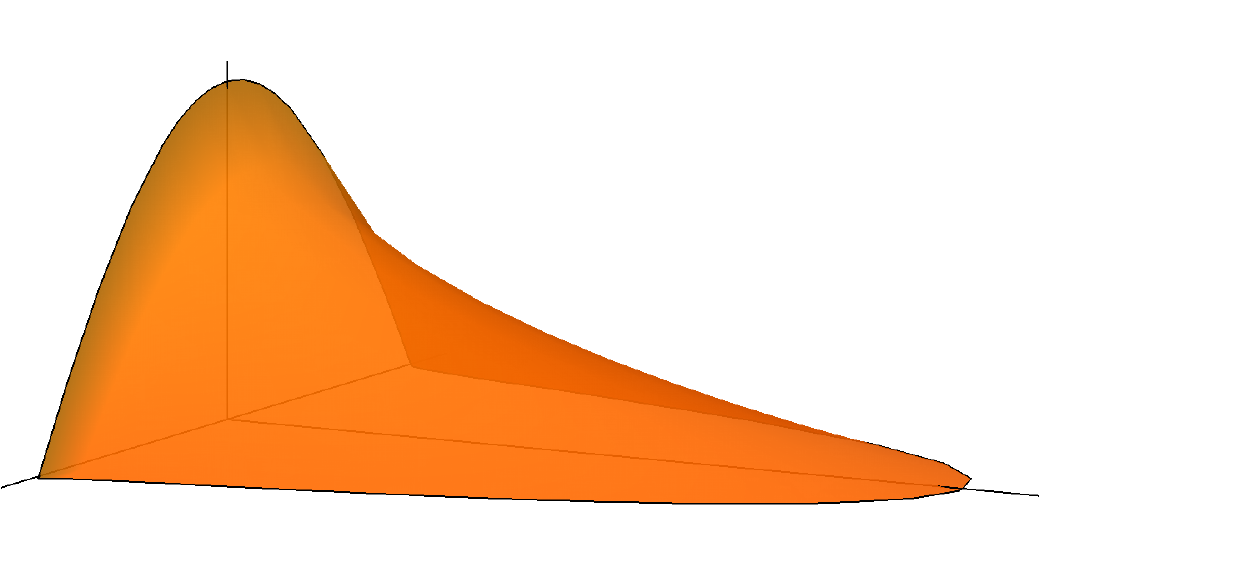}
	\end{minipage} \hfill
\end{figure}

\subsection{Examples on subdomains}

Several of our theorems are global in nature, requiring solutions $f\ge0$ to exist on the closed half-plane.
Here we look at the cases where a solution $f\ge0$ exists only on a strip $y\in[0,1]$ or on a half-plane of the form $y\in[1,\infty)$.
We see that {\it uniqueness} of solutions, with given boundary values, completely fails on such subdomains, as does the Liouville theorem. \\

\refstepcounter{Examp}
{\bf Example \arabic{Examp}: Non-Uniqueness on strips.} \label{ExNonUniqStrips}

For any constant $\lambda>0$ the equation
\begin{eqnarray}
	y^2\triangle{}f\,+\,(1+\lambda)yf_y\;=\;0 \label{EqnConstCoefNoB1}
\end{eqnarray}
has non-negative solutions on the strip $y\in[0,j_{\frac{\lambda}{2}},1]$
\begin{eqnarray}
	f(x,y)\;=\;y^{-\frac{\lambda}{2}}J_{\frac{\lambda}{2}}(y)\,e^{x}
\end{eqnarray}
where $J_{\frac{\lambda}{2}}$ is the Bessel function of the first kind and $j_{\frac{\lambda}{2},1}$ is its first zero.
The function $f$ is locally bounded but not bounded, non-negative, is $C^\infty$, and is precisely zero on the non-degenerate boundary $\{y=j_{\frac{\lambda}{2},1}\}$.
The solutions grow exponentially in the $x$-direction.
This shows non-uniqueness on the strip, even when values are specified on the non-degenerate boundary.
This also shows that the Liouville theorem, Theorem \ref{ThmLiouvilleFirstStatement}, certainly fails on subdomains of $H^2$.
(Incidentally, it also shows the necessity of some kind of growth assumption in Theorems 1.6 and 1.11 of \cite{FP}, even under strong differentiability assumptions.) \\

\refstepcounter{Examp}
{\bf Example \arabic{Examp}: Failure of almost-monotonicity and the Liouville theorem on half-planes.} \label{ExOtherHalfPlanes}

The ``almost monotonicity'' theorem is truly a global theorem and is false on subdomains, even unbounded subdomains, as we show here.
Consider the half-plane $y\in[1,\infty)$ and the equation (\ref{EqnConstCoefNoB1}).
The function
\begin{eqnarray}
	f(x,y)\;=\;1-y^{-\lambda}
\end{eqnarray}
is a positive solution to $y^2\triangle{}f+(1+\lambda)f_y=0$ on $y\in(0,\infty)$.
But we have $\lim_{y\rightarrow0}f(x,y)=1$, which is not the global minimum of $f$, and so almost-monotonicity fails.
This also shows the Liouville theorem fails on such a half-plane. \\

\subsection{The Heston-type operators}

The Heston operator, in appropriate coordinates such as in (\ref{EqnHestonModified}), is \begin{eqnarray}
	L_H\;=\;
	y\left(\partial_x\partial_x+\partial_y\partial_y\right)
	+(b_1+B_1y)\partial_{x}
	+(b_1+B_2y)\partial_{y}
	-r \label{EqnHestExEqn}
\end{eqnarray}
where we take $b_1,b_2,B_1,B_2,r$ to be constant---in the Heston model it is required that $b_2>0$, $B_2<0$, and it is normally assumed interest rates are positive, $r>0$ (although in the post-crisis world this may be questionable).
After multiplying through by $y$, we see that the operator
\begin{eqnarray}
	y\cdot{}L_H\;=\;
	y^2\triangle
	+y(b_1+B_1y)\frac{\partial}{\partial{x}}
	+y(b_1+B_2y)\frac{\partial}{\partial{y}}
	-ry
\end{eqnarray}
has the Euler-type degeneracy at $\{y=0\}$ that we study in this paper.
However the coefficients are not bounded for large $y$, and because of this, for solutions $f\ge0$ of $L_H(f)=0$, we expect our local results to hold but our global results to fail. \\

\refstepcounter{Examp}
{\bf Example \arabic{Examp}: Global solutions of $L_H(f)=0$.} \label{ExHestonGradFail}

With $L_H$ as in (\ref{EqnHestExEqn}) and taking $b_1=B_2=0$, $r=1$, and setting $b_2=1$ and $B_2=\pm1$ we see that
\begin{eqnarray}
	\begin{aligned}
		f(x,y)\;=\;1+y \quad \text{solves} \quad y^2\triangle{}f+y(1+y)f_y-yf=0, \\
		f(x,y)\;=\;e^y \quad \text{solves} \quad y^2\triangle{}f+y(1-y)f_y-yf=0.
	\end{aligned}
\end{eqnarray}
Both examples are $C^\infty$ and non-negative.
The example with $B_2=+1$ obeys the interior gradient estimate Proposition \ref{PropIntGradFirst}, but the $B_2=-1$ solution violates it.
These examples both violate the two Harnack inequalities Theorems \ref{ThmBoundHarnackLocalIntro} and \ref{ThmBoundHarnackGlobal}, the almost-monotonicity theorem Proposition \ref{PropAlmostMonoIntro}.
Both violate the Liouville theorem, Theorem \ref{ThmLiouvilleFirstStatement}.

But one might notice that $r>0$ in both cases, and $B_2\ge0$ is forbidden in the Heston financial model.
We have been unable to find an entire, locally finite solution $f\ge0$ to the Heston equation with both $r\le0$ and $B_2<0$.
This motivates Conjecture 2 of the Introduction, which we restate here for convenience.

{\bf Conjecture: The Liouville theorem for the time-independent Heston equation.}
{\it A non-negative, locally finite solution $L_H(f)=0$ for the operator $L_H$ of (\ref{EqnHestExEqn}) on the closed half-plane $\overline{H}{}^2$ with $b_1>0$, $B_2<-\epsilon^2$ for some non-zero $\epsilon$ and with non-positive interest rate $r\le0$, is necessarily constant.
Such a solution is zero if $r<0$.}

The absurdity of negative rates appears frequently in markets now---in some cases throughout the entire term structure\footnote{For the first time in July 2016, yields on Swiss government debt were negative out to 50 years.} and in some cases on private debt.\footnote{At the time of writing in autumn 2019, Barron's magazine has reported that \$600B in private debt is trading at negative interest rates.}
Our conjecture, if true, may have consequences for financial market modeling with negative interest rates.

\subsection{Global solutions and supersolutions} \label{SubSecExampsNonzeroC}

Most of our theorems require $c\ge0$.
Our Liouville theorem requires $c=0$.
We show that these restrictions are indeed necessary. \\

\refstepcounter{Examp}
{\bf Example \arabic{Examp}: Failure of almost-monotonicity and Liouville when $c<0$.} \label{ExNegC}

Taking $c=-y^2/(1+y^2)$ we see that $c$ is bounded and negative on the half-plane.
Then the equation
\begin{eqnarray}
	y^2\triangle{f}\,+\,yf_y\,-\,\frac{y^2}{1+y^2}f\;=\;0
\end{eqnarray}
has solution $f(x,y)=E(-y^2)$, where $E$ is the elliptic integral of the second kind.
This solution is positive, smooth, bounded at $y=0$, and unbounded at $y=\infty$ where it grows like a multiple of $y$.
Therefore it violates the strong constraints on behavior at infinity that almost-monotonicity imposes in the $c\ge0$ case.

The interior gradient estimate Proposition \ref{PropIntGradFirst} remains valid, as it must.
But in addition to the failure of almost-monotonicity, we see the failure of the Liouville theorem, Theorem \ref{ThmLiouvilleFirstStatement}. \\

\refstepcounter{Examp}
{\bf Example \arabic{Examp}: Failure of the Liouville theorem when $c\ge0$.} \label{ExPosC}

Consider the functions
\begin{eqnarray}
	\begin{aligned}
	&c\;=\;\begin{cases}
		0, \quad & 0\le{}y\le1 \\
		\frac14, &  1<y<\infty,
	\end{cases} \\
	&F(y)\;=\;\begin{cases}
		1, \quad & 0\le{}y\le1 \\
		y^{-\frac12}\,+\,\frac12{}y^{-\frac12}\log(y), &  1<y<\infty.
	\end{cases}
	\end{aligned}
\end{eqnarray}
Setting $f(x,y)=F(y)$ we see that $f\in{}C^{1,1}$, $f>0$, $f$ is uniformly bounded, and weakly solves $y^2\triangle{}f+2yf_y+cf=0$ on the entirety of the closed half-plane $\overline{H}{}^2$.
Yet this function is not constant, showing the Liouville theorem can fail when $c\ge0$. \\

\refstepcounter{Examp}
{\bf Example \arabic{Examp}: Failure of the Liouville theorem for superfunctions.} \label{ExSupers}
The classical Liouville theorem holds for supersolutions: if $\triangle{}f\le0$ weakly on $\mathbb{R}^2$ and $f$ is entire and non-negative, then $f$ is constant.
One may wonder if the Liouville theorem of this paper is similarly true when $L(f)\le0$.
But it is not true.
Depicted in Figure \ref{FigSuperFun} is the function
\begin{eqnarray}
	f(x,y)\;=\;y^{-2}\left(\sqrt{2}-\sqrt{1-x^2-y^2+\sqrt{(1-x^2-y^2)^2+4y^2}}\right)
\end{eqnarray}
which is uniformly bounded and satisfies $y^2\triangle{}f+3yf_y=0$ everywhere except along a singular ray $\{x=0,\,y\ge1\}$, where $L(f)\le0$ in the weak or the viscosity sense.
\begin{figure}[h]
	\hspace{-2in}
	\begin{minipage}[c]{0.65\textwidth}
		\caption{
			\it A uniformly bounded superfunction for $L=y^2\triangle+3y\partial_y$.\\
			\\ \label{FigSuperFun}
		} 
	\end{minipage} \hspace{-0.5in}
	\begin{minipage}[c]{0.2\textwidth}
		\includegraphics[scale=0.475]{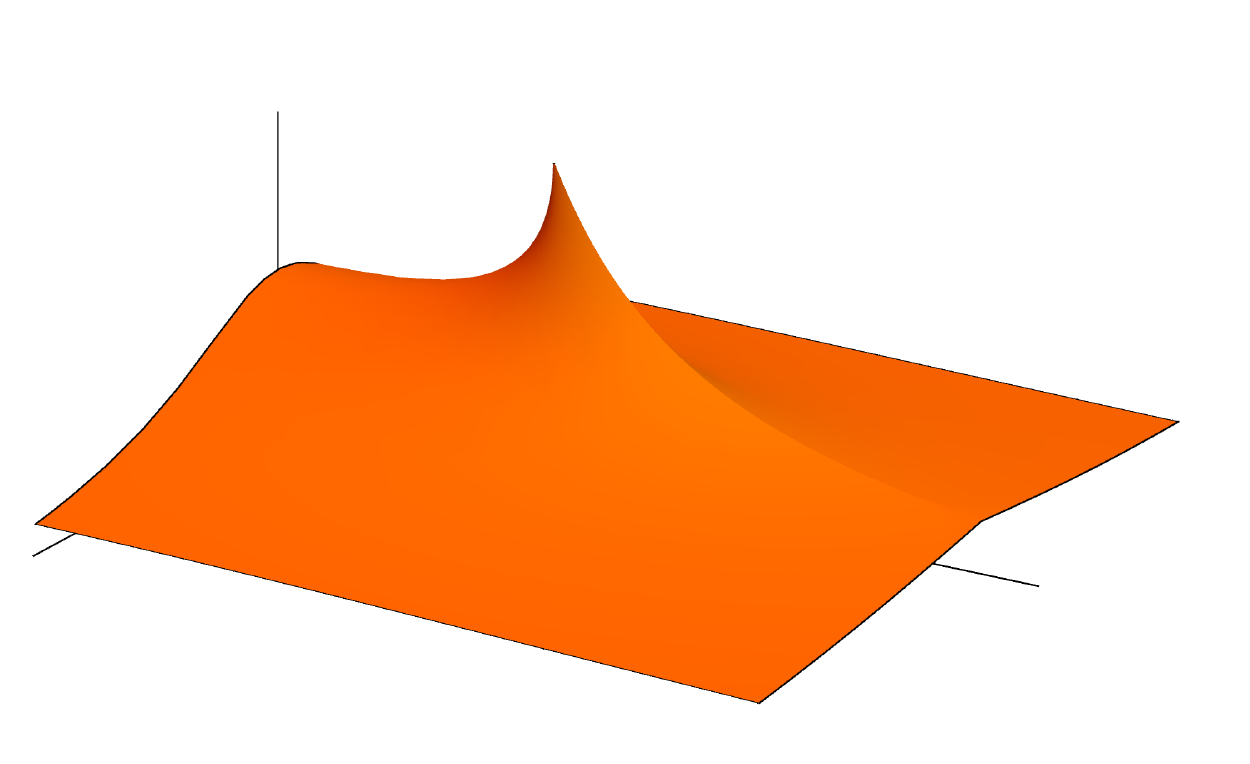}
	\end{minipage} \hfill
\end{figure}
This superfunction violates even our most basic result, the interior gradient bound, Proposition \ref{PropInteriorInitial}, for at the salient the function is $C^{0,\frac12}$ but no better.


\begin{thebibliography}{9}
	
	\bibitem{Ab1}{M. Abreu}, \emph{K\"ahler geometry of toric varieties and extremal metrics}, {International Journal of Mathematics}, {\bf 9} (1998) {641--651}
	
	\bibitem{CT}{E. Cinti and J. Tan}, \emph{A nonlinear Liouville theorem for fractional equations in the Heisenberg group}, Journal of Mathematical Analysis and Applications, {\bf 443} (2016) {434--454}
	
	\bibitem{CIR}{J. Cox, J. Ingersoll, and S. Ross}, \emph{A theory of the term structure of interest rates}, Econometrica {\bf 53} (1985),
	385--407.
	
	\bibitem{DH}{P. Daskalopoulos and R. Hamilton}, \emph{Regularity of the free boundary for the porous medium equation}, {Journal of the American Mathematical Society}, {\bf 11} no. 4 (1998) {899-965}
	
	
	\bibitem{Don1}{S. Donaldson}, \emph{A generalised Joyce construction for a family of	nonlinear partial differential equations}, {Journal of G\"okova Geometry Topology}, {\bf 3} (2009) {1-8}
	
	\bibitem{EM}{C. Epstein and R. Mazzeo}, \emph{Degenerate Diffusion Operators Arising in Population Biology}, Annals of Mathematics Studies, 2011
	
	\bibitem{Fee}{P. Feehan}, \emph{Maximum Principles for Boundary-Degenerate Second Order Linear Elliptic Differential Operators}, {Communications in Partial Differential Equations} {\bf 38} No. 11 (2013) {1863--1935}
	
	\bibitem{FP}{P. Feehan and C. Pop}, \emph{Schauder a priori estimates and regularity of solutions to boundary-degenerate elliptic linear second-order partial differential equations}, {Journal of Differential Equations}, {\bf 256} No. 3 (2014) {895–-956}
	
	\bibitem{FK}{D. Fernholz and I. Karatzas}, \emph{On optimal arbitrage}, {The Annals of Applied Probability}, {\bf{20}} No. 4 (2010) {1179--1204}
	
	\bibitem{Fich1}{G. Fichera}, \emph{Sulle equazioni differenziali lineari ellittico-paraboliche del secondo ordine}, {Atti della Accademia Nazionale dei Lincei. Memorie. Classe di Scienze Fisiche, Matematiche e Naturali, Serie VIII} {\bf 5} No. 1 (1956) {1--30}
	
	\bibitem{Fich2}{G. Fichera}, \emph{Sulle equazioni paraboliche del secondo ordine di tipo non variazionale}, {Annali di Matematica Pura ed Applicata}, {\bf 65} (1964), {127--151}.
	
	\bibitem{Fich3}{G. Fichera}, \emph{On a unified theory of boundary value problems for elliptic-parabolic equations of second order}, Boundary	problems in differential equations, Univ. of Wisconsin Press, Madison, 1960, pp. 97--120.
	
	\bibitem{Figu1}{B. Figueiredo}, \emph{Ince's limits for confluent and doubly confluent Heun equations}, {Journal of Mathematical Physics}, {\bf 46}, 113503 (2005)
	
	\bibitem{Guil}{V. Guillemin}, \emph{K\"ahler structures on toric varieties}, {Journal of Differential Geometry}, {\bf 40} (1994) {285--309}
	
	\bibitem{HKLW}{P. Hagan, D. Kumar, A. Lesniewski, and D. Woodward}, \emph{Managing smile risk}, {Wilmott Magazine} July (2002), 84--108.
	
	\bibitem{Hest}{S. Heston}, \emph{A closed-form solution for options with stochastic volatility with applications to bond and currency options}, {Review of Financial Studies}, {\bf 6} (1993) {327--343}
	
	\bibitem{HL}{G. Huang and C. Li}, \emph{A Liouville theorem for high order degenerate elliptic equations}, {Journal of Differential Equations}, {\bf 258} No 4 (2015) {1229--1251}
	
	
	\bibitem{JS}{M. Jleli and B. Samet}, \emph{Liouville-type theorems for a system governed by degenerate elliptic operators of fractional orders}, Arabian Journal of Mathematics {\bf 6} (2017) {201--211}
	
	\bibitem{Keld} {M. Keldysh}, \emph{On some cases of degenerate elliptic equations on the boundary of a domain}, Doklady Acad. Nauk USSR 77 (1951), 181-183 (in Russian).
	
	\bibitem{KN}{J. Kohn and L. Nirenberg}, \emph{Degenerate elliptic–parabolic equations of second order}, Communications in Pure and Applied Mathematics {\bf 20} (1967) {797-–872}.
	
	\bibitem{LP}{D. Lupo and K. Payne}, \emph{On the maximum principle for generalized solutions to the Tricomi problem}, {Communications in Contemporary Mathematics}, {\bf 2} No. 4 (2000) {535--557}
	
	\bibitem{DLMF} NIST Digital Library of Mathematical Functions. http://dlmf.nist.gov/, Release 1.0.24 of 2019-09-15. F. W. J. Olver, A. B. Olde Daalhuis, D. W. Lozier, B. I. Schneider, R. F. Boisvert, C. W. Clark, B. R. Miller, B. V. Saunders, H. S. Cohl, and M. A. McClain, eds.
	
	\bibitem{OR}{O. Oleinik and E. Radkevic}, \emph{Second order equations with nonnegative characteristic form}, Plenum Press, New York, 1973.
	
	\bibitem{Otw}{T. Otway}, \emph{The Dirichlet Problem for Elliptic-Hyperbolic Equations of Keldysh Type}, Springer-Verlag, Berlin, Heidelberg, 2002
	
	\bibitem{ST}{P. Stinga and J. Torrea}, \emph{Extension problem and Harnack's inequality for some fractional operators}, Communications in Partial Differential Equations, {\bf 35} No. 10 (2010) 
	
	\bibitem{Tric}{F. Tricomi}, \emph{Sulle equazioni lineari alle derivate parziali di secondo ordine, di tipo misto.} Rendiconti Atti dell’ Accademia Nazionale dei Lincei Ser. 5, no 14, (1923) 134-–247
	
	\bibitem{TW}{N. Trudinger and X. Wang}, \emph{Bernstein-J\"orgens Theorem for a Fourth Order Partial Differential Equation}, {Journal of Partial Differential Equations}, {\bf 15} (2002)
	
\end{thebibliography}
\end{document}